\newcommand{\versiondate}{(version February 23, 2017)} 
\definecolor{darkgreen}{rgb}{0,.5,0}
\theoremstyle{plain}
\newtheorem{theorem}{Theorem}[section]
\newtheorem{lemma}[theorem]{Lemma}
\newtheorem{proposition}[theorem]{Proposition}
\newtheorem{corollary}[theorem]{Corollary}
\theoremstyle{definition}
\newtheorem{definition}[theorem]{Definition}
\newtheorem{definitions}[theorem]{Definitions}
\newtheorem{remark}[theorem]{Remark}
\newtheorem{remarks}[theorem]{Remarks}
\newtheorem{subpart}[theorem]{} 
\newtheorem{application}[theorem]{Application}
\newtheorem{notation}[theorem]{Notation}
\newtheorem{monounaryalgebras}[theorem]{Monounary algebras}
\numberwithin{equation}{theorem} 
\newcommand{\New}[1]{\emph{#1}}
\DeclareMathOperator{\Sym}{Sym}
\DeclareMathOperator{\End}{End}
\DeclareMathOperator{\Con}{Con}
\DeclareMathOperator{\Pol}{Pol}
\DeclareMathOperator{\Inv}{Inv}
\DeclareMathOperator{\Eq}{Eq} 
\DeclareMathOperator{\Tol}{Tol}
\DeclareMathOperator{\Quord}{Quord}
\DeclareMathOperator{\preserves}{\triangleright}
\DeclareMathOperator{\notpreserves}{\not\triangleright}
\DeclareMathOperator{\At}{At}
\newcommand{\id}{{\sf id}}
\newcommand{\cE}{\mathcal{E}}
\newcommand{\cL}{\mathcal{L}}
\newcommand{\N}{\mathbb{N}}
\newcommand{\rmI}{{\rm I}}
\newcommand{\rmII}{{\rm II}}
\newcommand{\rmIII}{{\rm III}}
\newcommand{\rmpI}{{\rm (I)}} 
\newcommand{\rmpII}{{\rm (II)}}
\newcommand{\graph}[1]{#1^{\bullet}}
\newcommand{\meet}{\ensuremath{\land}}
\newcommand{\join}{\ensuremath{\lor}}
\newcommand{\filter}[2][r]{\ensuremath [#2\rangle_{#1}}
\newcommand{\kernelclass}[2][\ker f]{\ensuremath [#2]_{#1}}
\newcommand{\cover}{\prec}
\newcommand{\tra}{\textsf{tra}}
\newcommand{\sym}{\textsf{sym}}
\newcommand{\LE}[1][\cE]{\mathbf{0}_{#1}}
\newcommand{\GE}[1][\cE]{\mathbf{1}_{#1}}
\newcommand{\LT}[1][\cL]{\Delta_{#1}}
\newcommand{\GT}[1][\cL]{\nabla_{#1}}
\newcommand{\Pmod}[1]{(\text{mod}\,#1)}      
\newcommand{\Kong}[3]{#1\equiv #2 \, \Pmod{#3}} 
\let\rho=\varrho
\let\epsilon=\varepsilon
\let\phi=\varphi
\let\kappa=\varkappa
\let \restrictionORIGINAL=\restriction
\renewcommand{\restriction}{\hspace*{-0.9ex}\restrictionORIGINAL}
\author{Danica Jakub{\'\i}kov\'a-Studenovsk\'a\footnote{supported by
    Slovak VEGA grant 1/0063/14}\\ Institute of
  Mathematics\\ P.J. \v{S}af\'arik University\\ Ko\v sice
  (Slovakia)
\and Reinhard P\"oschel%
 \\Institute of Algebra\\TU Dresden (Germany)
\and S\'andor Radeleczki\addtocounter{footnote}{5}
 \footnote{This
  research started as part of the
  TAMOP-4.2.1.B-10/2/KONV-2010-0001 project, supported by the European
  Union, co-financed by the European Social Fund 113/173/0-2.} 
\\Institute of Mathematics\\ University of
    Miskolc (Hungary)}
\date{\emph{Dedicated to E. Tam\'as Schmidt}\\\versiondate}
\title{The lattice of congruence lattices of algebras\\
  on a finite set}
\begin{document}

\maketitle

\begin{abstract}
The congruence lattices of all algebras defined on a fixed
finite set $A$ ordered by inclusion form a finite atomistic lattice
$\cE$. We describe the atoms and coatoms.
Each meet-irreducible element of $\cE$ being determined by a single
unary mapping on $A$, we characterize completely those which are determined by a
permutation or by an acyclic mapping on the set $A$. 
Using these characterisations we
deduce several properties of the lattice $\cE$; in particular, we prove that
$\cE$ is tolerance-simple whenever $|A|\geq 4$.

\end{abstract}



\section{Introduction}

In 1963 \textsc{G. Gr\"atzer} and \textsc{E.T. Schmidt} proved that
every algebraic lattice is isomorphic to the congruence lattice of
some algebra (\cite{GraS63}). Since the algebras constructed by them were infinite,
the result immediately raised the question: Does every finite lattice occur
as the congruence lattice of a \textsl{finite} algebra? The problem
remained open till today, and it is usually referred as the
\textit{finite lattice representation problem}. It is an abstract
representation problem because it asks for a solution up to
isomorphism. The concrete version is the more involved question: Given a
sublattice $E$ of the the partition lattice $\Eq(A)$ of all
equivalence relations on a set $A$, does there exist an algebra on the same base
set $A$, such that $E$ \textsl{equals} the congruence lattice of this
algebra (in \cite{Wer76} such lattices $E$ are characterized by
closure properties).

The subject of the present paper is related to the finite
representation problem in its concrete version. For a fixed
finite set $A$ we consider all possible congruence lattices of
algebras with base set $A$. These congruence lattices (ordered by
inclusion) form itself a lattice $\cE$ and we are going to investigate
this lattice. An important tool is our
knowledge about the lattice $\cL$ of all quasiorder lattices of
algebras defined on the set $A$ described in \cite{JakPR2016} (using
some techniques developed previously in the papers \cite{JakPR11} and
\cite{JakPR13}). These two lattices are strongly
interrelated: there is a residual mapping from $\cL$ to
$\cE$. Therefore, in Section~\ref{sec:2}, we investigate on abstract
level, how lattice properties (which are relevant for us) behave under residual
mappings (for instance, the coatoms of $\cE$ directly can be obtained
from the coatoms of $\cL$, see~\ref{B1}\eqref{B1iv}).

Based on preliminary results from Section~\ref{sec:1} and the results
of \cite{JakPR2016} and Section~\ref{sec:2}, 
we describe the atoms (\join-irreducible
elements), coatoms (Section~\ref{sec:3}) and
further \meet-irreducible elements (Sections~\ref{sec:4}
and~\ref{sec:4a}) of the lattice $\cE$. 
Finally, in Section~\ref{sec:5}, we investigate several lattice
theoretic properties of $\cE$, e.g., it is tolerance simple, but has
no properties related with modularity.

\section{Preliminaries}\label{sec:1}

Throughout the paper we fix a base set $A$ (if not stated otherwise, 
$A$ is assumed to be finite). Further, let $\N:=\{0,1,2,\dots\}$ and
$\N_{+}:=\N\setminus\{0\}$. For a mapping $f\colon A\to A$, we write $fa$ for
the image of an element $a\in A$, and $f^{n}$ ($n\in\N$) denotes the $n$-fold
composition of $f$ (by convention, $f^{0}$ is the identity mapping
$\id_{A}$).

\begin{definitions}\label{A00}
Let $\Eq(A)$ and $\Quord(A)$ denote the set of all \emph{equivalence
relations} (reflexive, symmetric and transitive) and
\emph{quasiorders} (reflexive and transitive relations), respectively,
on a set $A$. 
The least and the greatest quasiorders (which are in fact
equivalences) are $\Delta:=\{(x,x)\mid x\in
A\}$ and $\nabla:=A\times A$.
 A unary mapping $f\colon A\to A$ \New{preserves} a quasiorder
 $q\in\Quord(A)$ (in
 particular, an equivalence $q=\kappa\in\Eq(A)$), 
notation $f\preserves q$, if
      \[ \forall x,y\in A: (x,y)\in q \implies (fx,fy)\in q.\]
This fact is also expressed by the following notions and notation: $f$
is an \New{endomorphism} of 
  $q$ ($f\in\End q$), $q$ is \New{invariant for} or \New{compatible
    with} $f$, or $q$ is a 
    \emph{quasiorder of $(A,f)$}
  ($q\in\Quord(A,f)$), or $\kappa$ is a \emph{congruence of
    $(A,f)$} ($\kappa\in\Con(A,f)$). 

The identity $\id_{A}\colon A\to A: x\mapsto x$ as well as all constant mappings
$A\to A: x\mapsto a$ are called \New{trivial} because just they
preserve every quasiorder $q\in \Quord(A)$.  
  For a unary algebra $(A,F)$, $F\subseteq A^{A}$, let $\Con(A,F)$ and
  $\Quord(A,F)$ be its \New{congruence} and \New{quasiorder lattice}, respectively, i.e.\ the lattice of all
  equivalences or quasiorders that are compatible with 
  each $f\in F$. Moreover, let 
  \begin{align*}
    \cE:=\{\Con(A,F)\mid F\subseteq A^{A}\} \text{ and }
    \cL:=\{\Quord(A,F)\mid F\subseteq A^{A}\}
  \end{align*}
  denote the lattice of all such congruence lattices and quasiorder
  lattices, respectively, on $A$, ordered
  by inclusion. Instead of $\Quord(A,F)$ and $\Con(A,F)$ we sometimes
  write simply $\Quord F$ and $\Con F$. Since congruences of an algebra
  are characterized by the unary polynomial functions of the algebra,
  the lattice $\cE$ is in fact the lattice of all congruence lattices
  of \textsl{arbitrary} (not necessarily unary) algebras on the set
  $A$ (the same holds for quasiorders and $\cL$).
\end{definitions}

\begin{remarks}\label{A01} 
The relation $\preserves$ induces (via the operators $\Con$ and
$\End$) a Galois connection between unary
mappings and equivalence relations on $A$. The Galois closures are just the
elements (congruence lattices) $\Con(A,F)\in\cE$ and monoids of the
form $\End Q$ (for some set $Q\subseteq\Eq(A)$), in particular, we have
\[E\in \cE\iff E=\Con(A,\End E) \text{ (i.e., $E$ is Galois closed).}\]
The meet in $\cE$ is the intersection while the join of elements
$E_{i}\in\cE$ ($i\in I$) is given by $\bigvee_{i\in
    I}E_{i}=\Con\End\bigcup_{i\in I} E_{i}$.

Clearly, $F\subseteq F'$ implies $\Con(A,F')\subseteq
\Con(A,F)$. Thus \meet-irreducibles in $\cE$ must be of the form
$\Con(A,f)$ for a single function $f$ because $\Con(A,F)$ is
the intersection of all $\Con(A,f)$ with $f\in F$. Analogously,
\join-irreducible (in case of infinite $A$, completely
\join-irreducible) elements of $\cE$ must be 
  of the form $E_{\kappa}:=\Con\End\kappa$ for a single equivalence relation
  $\kappa\in\Eq(A)\setminus\{\Delta,\nabla\}$,
  because, for $E\in \cE$, 
 $\End E$ is the intersection of all $\End \kappa$ and thus
 $\Con\End E=E$ is
the join (in $\cE$) of all $\Con\End \kappa$ with $\kappa\in E$.
  
\end{remarks}
\begin{notation}\label{A1a}
For $\kappa\in\Eq(A)$ consider the corresponding partition $A/\kappa$
into equivalence classes. If $C_{1}=\{a_{1},a_{2},\dots\}$,
$C_{2}=\{b_{1},b_{2},\dots\}$,\dots, $C_{m}=\{c_{1},c_{2},\dots\}$ are the
equivalence classes of $\kappa$ with at least two elements, then we
use the notation
\begin{align*}
  \kappa&=[a_{1},a_{2},\dots]\,[b_{1},b_{2},\dots]\,\dots\,
  [c_{1},c_{2},\dots]\text{ or }\\
 \kappa&=[C_{1}]\,[C_{2}]\,\dots\,[C_{m}].
\end{align*}
All other elements which do not appear in the list form one-element
equivalence classes. 
  
\end{notation}

\begin{monounaryalgebras}\label{A0}
Here we introduce some special notions for monounary algebras; for a
more general view to monounary algebras we refer to~\cite{JakP09}. 

Let $(A,f)$ be a finite
monounary algebra.
Let $Z_f(x):=\{f^ix\mid i\in\N\}$ be the
subalgebra of $(A,f)$ generated by an element $x\in A$. Obviously, we have $a\in Z_f(x)\iff
Z_f(a)\subseteq Z_f(x)$. 
We write $B\leq(A,f)$ if $B$ is (the carrier set of) a subalgebra of $(A,f)$.

Considering the graph $\graph f:=\{(a,b)\in A^{2}\mid b=fa\}$ of $f$,
one can use a graph theoretic terminology. 
For $a\in A$, let $K_f(a)$ denote the \New{connected component} of
$\graph f$ to
which $a$ belongs (note that two vertices $x,y\in A$ are connected w.r.t.~$f$,
iff there exist $i,j\in\N$ with
$f^{i}x=f^{j}y$). A component $K$ of $f$ is called \New{nontrivial} if
it contains at least two elements (thus a trivial component is just a
fixed point).

For a monounary algebra  $(A,f)$, the
  least quasiorder and congruence, resp.,
  containing a pair $(x,y)\in A^{2}$ is denoted by $\alpha_{f}(x,y)$
  and $\theta_{f}(x,y)$ 
 (\New{principal congruence}), resp., and we have
  \begin{align*}    
   \alpha_{f}(x,y)&=\Delta\cup\{(f^ix,f^iy)\mid i\in\N\}^{\tra},\\
    \theta_{f}(x,y)&=\Delta\cup\{(f^ix,f^iy)\mid i\in\N\}^{\sym\, \tra}.
  \end{align*}
Here $\Psi^{\sym}=\Psi\cup\Psi^{-1}$ denotes the symmetric closure and
$\Psi^{\tra}$  the transitive closure of a binary relation
$\Psi\subseteq A\times A$.
\end{monounaryalgebras}

We now collect some properties for functions $f,g\in A^A$ with
$\Con(A,f)\subseteq\Con(A,g)$.

\begin{lemma}\label{A2} Let $f,g\in A^A$ be nontrivial
  and $\Con(A,f)\subseteq\Con(A,g)$. Then we have

  \begin{enumerate}[\rm (i)]
  \item\label{A2i}
    $\forall x,y\in A: (x,y)\in \kappa\in\Con(A,f)\implies (gx,gy)\in \kappa$,\\
    in particular we have $(gx,gy)\in\theta_f(x,y)$ and 
    $\theta_{g}(x,y)\subseteq \theta_{f}(x,y)$.
  
  \item\label{A2ii} Let $B$ be a subalgebra of $(A,f)$. Then either $B$ is also
    a subalgebra of $(A,g)$ or $g$ is constant on $B$, where the
    constant does not belong to $B$. 
    \end{enumerate}
\end{lemma}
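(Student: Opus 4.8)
For part (i), the plan is to unwind the definitions directly. Since $\Con(A,f)\subseteq\Con(A,g)$, any $\kappa\in\Con(A,f)$ lies in $\Con(A,g)$ as well, so $g$ is an endomorphism of $\kappa$; by the meaning of $\preserves$ this gives exactly $(x,y)\in\kappa\implies(gx,gy)\in\kappa$, which is the first assertion. For the two ``in particular'' claims I would take $\kappa=\theta_{f}(x,y)$, the principal congruence of $(A,f)$ generated by the pair $(x,y)$. Since $(x,y)\in\theta_{f}(x,y)\in\Con(A,f)$, the assertion just proved yields $(gx,gy)\in\theta_{f}(x,y)$. Moreover $\theta_{f}(x,y)$ is itself a congruence of $(A,g)$ containing $(x,y)$, so by minimality of the principal congruence $\theta_{g}(x,y)$ we obtain $\theta_{g}(x,y)\subseteq\theta_{f}(x,y)$.

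For part (ii) the key step is to exhibit a suitable congruence of $(A,f)$ built from $B$. Let $\kappa_{B}$ be the equivalence relation on $A$ whose only nontrivial block is $B$, all remaining elements forming singleton classes, i.e.\ $\kappa_{B}=\Delta\cup(B\times B)$. Since $B$ is a subalgebra of $(A,f)$ we have $f(B)\subseteq B$, and this is exactly what is needed to verify $\kappa_{B}\in\Con(A,f)$: the only non-diagonal pairs to check lie in $B\times B$, and for $x,y\in B$ we get $fx,fy\in B$, hence $(fx,fy)\in\kappa_{B}$. By the inclusion hypothesis $\kappa_{B}\in\Con(A,g)$, so $g$ preserves $\kappa_{B}$.

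I then read off the dichotomy. Assume $B$ is not a subalgebra of $(A,g)$ and pick $b_{0}\in B$ with $gb_{0}\notin B$. For every $b\in B$ the pair $(b,b_{0})\in\kappa_{B}$ forces $(gb,gb_{0})\in\kappa_{B}$; since $gb_{0}\notin B$ and every $\kappa_{B}$-class other than $B$ is a singleton, the only possibility is $gb=gb_{0}$. Hence $g$ is constant on $B$ with value $gb_{0}\notin B$, which is the second alternative; this argument also covers the case $|B|=1$, where $b=b_{0}$ is the only instance. Thus either $g(B)\subseteq B$ or $g$ is constant on $B$ with constant outside $B$.

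I do not anticipate a serious obstacle: part (i) is a direct translation of the hypothesis, and the only point in part (ii) requiring care is the correct choice of $\kappa_{B}$ together with the observation that its classes are precisely $B$ and the singletons, which is exactly what turns ``$g$ preserves $\kappa_{B}$'' into the stated alternative.
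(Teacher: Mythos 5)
Your proof is correct and follows essentially the same route as the paper: part (i) is the same direct unwinding of the hypothesis, and part (ii) uses the identical congruence $\Delta\cup(B\times B)$ (the paper's $\epsilon_{B}$), merely phrased in the contrapositive direction. No gaps.
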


\begin{proof}
  \eqref{A2i} is clear since $f\preserves \kappa$ implies $g\preserves
  \kappa$ what follows from the
  assumption $\Con(A,f)\subseteq\Con(A,g)$.

\eqref{A2ii}: For a subalgebra $B$, $\epsilon_{B}:=\Delta\cup B\times B$
belongs to $\Con(A,f)$. Let $x\in B$. If $g$ is not constant on $B$,
then there exists  
$y\in B$ such that $gx\neq gy$. Because
$(x,y)\in \epsilon_{B}$, by (i) we
have $(gx,gy)\in \epsilon_{B}\setminus \Delta$, in particular $gx\in
B$. Thus $B$ is closed under $g$. If $g$ is constant on $B$ and $B$ is
not a subalgebra of $(A,g)$, then the constant cannot be an element of $B$.
 \end{proof}

 \begin{remark}\label{A2a}
   The property in \ref{A2}\eqref{A2i} completely characterizes the containment
   of the congruence lattices. We have for $f,g\in A^A$:
   \begin{align*}
     \Con(A,f)\subseteq\Con(A,g) \iff \forall x,y\in A:
     (gx,gy)\in\theta_{f}(x,y). 
   \end{align*}
 \end{remark}

In preparation of the next proposition we need the following lemma.

 \begin{lemma}\label{A3a}
   Let $f$ be  a permutation of 
   prime power order  $p^{m}$ with at least two cycles of length
   $p^{m}$. Then $\End\Con(A,f)=\End\Quord(A,f)$.
 \end{lemma}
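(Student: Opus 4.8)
The plan is to prove the two inclusions of $\End\Con(A,f)=\End\Quord(A,f)$ separately. Since congruences are quasiorders we have $\Con(A,f)\subseteq\Quord(A,f)$, and as $\End$ is antitone this immediately gives $\End\Quord(A,f)\subseteq\End\Con(A,f)$. All the work is therefore in the reverse inclusion: I would fix an arbitrary $g\in\End\Con(A,f)$ (the trivial maps preserve every quasiorder and so lie in both monoids, hence I may assume $g$ nontrivial) and show that $g$ preserves every quasiorder of $(A,f)$.

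For this I would first record the quasiorder analogue of Remark~\ref{A2a}: preservation by a fixed $g$ is closed under meets and joins of relations, and every $q\in\Quord(A,f)$ is the join of the principal quasiorders $\alpha_{f}(u,v)$ with $(u,v)\in q$. Consequently $g\in\End\Quord(A,f)$ is \emph{equivalent} to the condition that $(gx,gy)\in\alpha_{f}(x,y)$ for all $x,y\in A$ (given this, any $(u,v)\in q$ yields $(gu,gv)\in\alpha_{f}(u,v)\subseteq q$). By Lemma~\ref{A2}\eqref{A2i} we already know $(gx,gy)\in\theta_{f}(x,y)$, so it remains to upgrade $\theta_{f}$ to $\alpha_{f}$ pair by pair.

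The case analysis splits according to the components of $\graph f$. If $x=y$, or if $x,y$ lie in one common cycle $\cong\Z/\ell$, then $\alpha_{f}(x,y)=\theta_{f}(x,y)$: the transitive closure of the $f$-orbit of $(x,y)$ inside a finite cyclic group already realises the reversed pairs, so here there is nothing to prove. The only remaining case is $x\in C$, $y\in C'$ with $C\neq C'$; there $\alpha_{f}(x,y)=\Delta\cup\{(f^{i}x,f^{i}y)\mid i\in\N\}$, since all generating pairs run from $C$ to $C'$ and the transitive closure adds nothing, whereas $\theta_{f}(x,y)$ additionally contains the reversed pairs running from $C'$ to $C$. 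Assuming for contradiction that $(gx,gy)$ is such a reversed pair, i.e. $gx\in C'$ and $gy\in C$, Lemma~\ref{A2}\eqref{A2ii} applied to the subalgebras $C$ and $C'$ forces $g$ to be constant on $C$ with value $gx\in C'$ and constant on $C'$ with value $gy\in C$.

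To derive the contradiction I would exhibit a congruence of $(A,f)$ that $g$ fails to preserve. When $C,C'$ are both nontrivial, this is a diagonal congruence pairing $c_{t}$ with $c'_{t+r}$ (using a quotient of the longer cycle down to the length of the shorter one when $|C|\neq|C'|$, which is possible because every cycle length divides $p^{m}$); for a suitable nonzero shift $r$ the two constant values $gx,gy$ land in different classes, so a pair inside one class is sent outside it. When one of $x,y$ is a fixed point the diagonal trick yields no usable shift, and this is exactly where the hypothesis of \emph{two} cycles of length $p^{m}$ is needed: I would instead collapse the fixed point together with one of these two long cycles chosen to avoid the relevant image value, and use the resulting $\epsilon_{B}$ (as in the proof of Lemma~\ref{A2}\eqref{A2ii}) to separate the swapped values. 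The main obstacle is precisely this bookkeeping of cycle lengths and fixed points: one must verify uniformly that the two guaranteed $p^{m}$-cycles always provide a pairing or collapsing partner disjoint from the prescribed image, which is what ``at least two cycles of maximal length $p^{m}$'' buys.
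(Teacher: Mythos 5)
Your overall strategy coincides with the paper's: reduce membership in $\End\Quord(A,f)$ to preservation of the principal quasiorders, dispose of the one-cycle case via $\alpha_f(x,y)=\theta_f(x,y)$, and handle $x,y$ in distinct cycles by contradiction using Lemma~\ref{A2}\eqref{A2ii} and explicitly constructed congruences. However, there is a genuine gap in your case analysis. For $x\in C$, $y\in C'$ in distinct cycles of lengths $p^{k_1}\geq p^{k_2}$ it is \emph{not} true that $\theta_f(x,y)=\alpha_f(x,y)\cup\alpha_f(x,y)^{-1}$: the symmetric--transitive closure also creates the pairs $\beta=\{(f^ix,f^jx)\mid j\equiv i \pmod{p^{k_2}}\}$ inside the \emph{longer} cycle, and $\beta\not\subseteq\Delta$ whenever $k_1>k_2$. (Smallest instance: $C=\{x,fx\}$ a $2$-cycle and $y$ a fixed point give $\theta_f(x,y)=[x,fx,y]$, a single three-element block.) Hence, besides the ``reversed pair'' possibility $gx\in C'$, $gy\in C$ that you treat, there is a second possibility $(gx,gy)\in C\times C$ which your proposal never considers. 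This is exactly Case~2 of the paper's proof and it requires a different argument: there one shows via Lemma~\ref{A2}\eqref{A2ii} that $g$ is constant on $A\setminus C$, observes that $k_1=k_2=m$ is impossible because then $\beta\subseteq\Delta$, and otherwise picks a point $x_0$ in a cycle of length $p^m$ distinct from $C$, so that every block of the congruence $\theta_f(x_0,x)$ meets $C$ in exactly one element and $(gx_0,gx)=(gy,gx)$ violates it.

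Two further remarks on the case you do treat. The paper applies Lemma~\ref{A2}\eqref{A2ii} to the subalgebras $A\setminus C$ and $A\setminus C'$ rather than to $C$ and $C'$; this gives an immediate contradiction whenever a third component exists and reduces the reversed-pair case to $A=C\cup C'$ with two $p^m$-cycles, where the single shifted diagonal congruence $\theta_f(x,fy)$ finishes the job. Your shifted-diagonal argument is sound when both cycles are nontrivial, but your treatment of the fixed-point subcase is not: with $B=\{y\}\cup D$ the map $g$ could a priori be constant on $D$ with value $gy$, in which case every pair of $\epsilon_B$ is sent to a pair of $\epsilon_B$ and no contradiction arises; you would need a different congruence there (e.g.\ a diagonal one pairing $D$ with $C$), and you yourself flag this bookkeeping as unresolved.
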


 \begin{proof} The inclusion ``$\supseteq$'' is always true. To show
   ``$\subseteq$'', let $h\notin \End\Quord(A,f)$. Thus there exist
   $\rho\in\Quord(A,f)$ with $h\notpreserves\rho$ and therefore some
   principal quasiorder $\alpha_{f}(x,y)$ which is not preserved by
   $h$ for some $(x,y)\in \rho$. We must show $h\notin\End\Con(A,f)$. 
   Assume on the contrary that $h\in\End\Con(A,f)$ or, equivalently,
   $\Con(A,f)\subseteq\Con(A,h)$. 
W.l.o.g. we can
  assume $(hx,hy)\notin \alpha_{f}(x,y)$ (because there must exist
  $(u,v)\in \alpha_{f}(x,y)$ with $(hu,hv)\notin
  \alpha_{f}(u,v)\subseteq \alpha_{f}(x,y)$, one can use $(u,v)$
  instead of $(x,y)$).

If $x,y$ belong to the same cycle
 of the permutation $f$, then $\alpha_{f}(x,y)=\theta_{f}(x,y)$
 (cf.~\cite[Lemma~3.1]{Jak09}) and we
 have $h\notpreserves \theta_{f}(x,y)$, a contradiction.
 Thus we may assume $x\in C_{1}$, $y\in
 C_{2}$ where $C_{1}, C_{2}$ are different cycles of $f$ of length
 $p^{k_{1}}$ and $p^{k_{2}}$, resp. Moreover, w.l.o.g. assume
 $k_{1}\geq k_{2}$. Then we have
 $\alpha:=\alpha_{f}(x,y)=\Delta\cup\{(f^{i}x,f^{i}y)\mid
 0\leq i\leq p^{k_{1}}-1\}$ and 
$\theta:=\theta_{f}(x,y)=\alpha\cup\alpha^{-1}\cup\beta$ where 
   $\beta:=\{(f^{i}x,f^{j}x)\mid
   i,j\in\{0,1,\dots, p^{k_{1}}-1\},\,\Kong{j}{i}{p^{k_{2}}}\}$; note that
     $\beta\subseteq  C_{1}\times C_{1}$. 

We distinguish the following cases (recall $h\preserves\theta$ and
hence $(hx,hy)\in\theta\setminus\alpha$):

Case 1: $(hx,hy)\in \alpha^{-1}$, i.e., $(hx,hy)=(f^{i}y,f^{i}x)\in
C_{2}\times C_{1}$ for some $i$. By Lemma~\ref{A2}\eqref{A2ii}, $h$ is
constant $f^{i}x$ on $A\setminus C_{1}\leq (A,f)$ and constant $f^{i}y$ on
$A\setminus C_{2}\leq(A,f)$. If there exists $c\in
A\setminus(C_{1}\cup C_{2})$, then 
$f^{i}x=hc=f^{i}y$, a contradiction. Thus $A=C_{1}\cup C_{2}$,
i.e., $C_{1}$ and $C_{2}$ must be two cycles of length $p^{m}$. But then
$h$ does not preserve $\theta_{f}(x,fy)$ since
$(hx,hy)=(f^{i}y,f^{i}x)\notin\theta_{f}(x,fy)=[x,fy][fx,f^{2}y]\dots[f^{i}x,f^{i+1}y]\dots$, a contradiction.

Case 2: $(hx,hy)\in\beta$, hence
$(hx,hy)\in C_{1}\times C_{1}$. In particular we
 have $|C_{1}|>1$, i.e., $k_{1}\geq 1$. Further, 
by~\ref{A2}\eqref{A2ii}, $h$ is constant $hy\in C_{1}$ on $A\setminus
C_{1}\leq (A,f)$. 

If $k_{1}=k_{2}=m$, then
$(hx,hy)\in (C_{1}\times C_{1})\cap \theta_{f}(x,y)\subseteq\Delta$, a contradiction.

If $k_{2}< m$, then there exists a cycle $C$ of length
$p^{m}$ which is 
distinct from $C_{1}$. Let $x_{0}\in C$. Then each block of 
$\theta_{f}(x_{0},x)$ contains exactly one element of $C_{1}$, hence 
$(hx_{0},hx)=(hy,hx)\notin\theta_{f}(x_{0},x)$, a contradiction to
$h\preserves \theta_{f}(x_{0},x)\in\Con(A,f)$.

Thus the assumption $h\in\End\Con(A,f)$ must fail, i.e.,
$h\notin\End\Con(A,f)$.
 \end{proof}

 \begin{proposition}\label{A3}
   Let $f,g\in A^A$ be nontrivial such that
   $\Con(A,f)\subseteq\Con(A,g)$ and let $f$ be  a permutation of 
   prime power order  $p^{m}$ with at least two cycles of length
   $p^{m}$. Then  there
        exists $k\in\{1,\dots,p^{m}-1\}$ such that $g=f^k$.
 \end{proposition}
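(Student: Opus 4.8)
The plan is to transfer the assertion from congruences to quasiorders by means of Lemma~\ref{A3a}, and then to apply the analogous (already available) characterisation for quasiorder lattices. First observe that, by definition, $\Con(A,f)\subseteq\Con(A,g)$ says precisely that $g$ preserves every congruence of $(A,f)$, i.e.\ $g\in\End\Con(A,f)$. Since $f$ is a permutation of prime power order $p^{m}$ with at least two cycles of length $p^{m}$, Lemma~\ref{A3a} applies and gives $\End\Con(A,f)=\End\Quord(A,f)$. Hence $g\in\End\Quord(A,f)$, which means that $g$ preserves every quasiorder of $(A,f)$, that is, $\Quord(A,f)\subseteq\Quord(A,g)$.

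At this point the problem has been reduced entirely to quasiorders, and I would finish by invoking the quasiorder analogue of the statement established for $\cL$ in \cite{JakPR2016}: for nontrivial $g$ and a permutation $f$ of prime power order $p^{m}$ with at least two full-length cycles, $\Quord(A,f)\subseteq\Quord(A,g)$ forces $g=f^{k}$ for some $k\in\{1,\dots,p^{m}-1\}$. Combined with the previous paragraph this yields the claim. Conversely, each such $f^{k}$ is nontrivial and satisfies $\Con(A,f)\subseteq\Con(A,f^{k})$, since a power of an endomorphism of $\kappa$ is again an endomorphism of $\kappa$; this confirms that the listed values of $k$ are exactly the admissible ones.

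The essential content therefore sits in Lemma~\ref{A3a} (already proved) together with the quasiorder characterisation. Should the latter need to be argued directly, the route I would take mirrors the case analysis in the proof of Lemma~\ref{A3a}: each cycle $C$ of $f$ is a subalgebra of $(A,f)$, so by Lemma~\ref{A2}\eqref{A2ii} either $g(C)\subseteq C$ or $g$ is constant on $C$ with value outside $C$, and the presence of two cycles of length $p^{m}$ is exactly what rules out the constant alternative on the long cycles, forcing $g$ to act within cycles. One then pins down a common offset using the cross-cycle principal quasiorders $\alpha_{f}(c_{0},d_{0})$ with $c_{0}\in C_{1}$, $d_{0}\in C_{2}$, whose non-diagonal pairs form the ``matching'' $\{(f^{i}c_{0},f^{i}d_{0})\mid i\in\N\}$; the requirement $g\preserves\alpha_{f}(c_{0},d_{0})$ then compels $g$ to realise one and the same power $f^{k}$ on all cycles simultaneously. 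I expect this final synchronisation step to be the main obstacle: showing that the power $k$ realised by $g$ on one cycle is forced to agree on every other cycle. This is precisely where the hypothesis of at least two length-$p^{m}$ cycles is indispensable, and it is what makes the whole reduction through Lemma~\ref{A3a} rigid enough to succeed.
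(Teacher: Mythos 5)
Your proposal is correct and follows essentially the same route as the paper: reduce $\Con(A,f)\subseteq\Con(A,g)$ to $g\in\End\Con(A,f)=\End\Quord(A,f)$ via Lemma~\ref{A3a}, hence $\Quord(A,f)\subseteq\Quord(A,g)$, and then invoke the quasiorder-level result from \cite{JakPR2016} (the paper cites Proposition~2.5(b) there) to get $g=f^{k}$, with $k<p^{m}$ because $f^{p^{m}}=\id_{A}$. The additional sketch of how one might reprove the quasiorder characterisation is not needed, since that result is taken as known in the paper as well.
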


 \begin{proof}
   $\Con(A,f)\subseteq\Con(A,g)$ is equivalent to $g\in\End\Con(A,f)$,
   thus $g\in\End\Quord(A,f)$ by Lemma~\ref{A3a}, consequently
   $\Quord(A,f)\subseteq\Quord(A,g)$. From
   \cite[Proposition~2.5(b)]{JakPR2016} we conclude $\exists
   k\in\N_{+}: g=f^{k}$. Clearly $k$ can be chosen less than $p^{m}$
   since $f^{p^{m}}=\id_{A}$.
 \end{proof}

\section{Residual mappings and \meet-irreducibles}\label{sec:2}

We shall strongly use results about the lattice $\cL$ of quasiorder
lattices for the investigation of the lattice $\cE$ of congruence
lattices. However, we want to seperate those connections which are of pure
lattice theoretic nature (and which are -- from our point of view --
of its own interest). This is done in this section. Based on the
observation that
 $\Phi: \cL\to\cE: Q\mapsto Q\cap\Eq(A)$
is a residual mapping,
we consider this case in a general setting.

Let $L$ and $E$ be arbitrary lattices which -- for simplicity -- here are assumed
to be finite, the least and largest elements are denoted by $0_{L},
0_{E}$ and $1_{L},1_{E}$.
A mapping $\phi:L\to E$ is called \New{residual}  if it
is a \meet-homomorphism (and therefore also monoton with respect to the
lattice orders) and $\phi(1_{L})=1_{E}$ (cf.\ e.g.\
\cite{Jan94} or \cite{JanR2015}). 

The following proposition
shows that then the \meet-irreducible elements of $\cE$, in particular
coatoms, can be
constructed from the \meet-irreducible elements of $\cL$.

\begin{proposition}\label{B1} Let $\phi:L\to E$ be a surjective
  residual mapping. 
  \begin{enumerate}[\rm(i)]
\item \label{B1i} Let $m\in
E$ be a \meet-irreducible element in $E$. Then
$\phi^{-1}(m)\neq\emptyset$ and each $q\in L$ which is maximal in
$\phi^{-1}(m)$ is \meet-irreducible in $L$.

\item\label{B1ii} Let $m\in E$ and let each $q\in L$ with $\phi(q)=m$ be
  \meet-irreducible in $L$. Then $m$ is \meet-irreducible in $E$.

\item \label{B1iii} Assume 
  \begin{align*}\tag{\textdagger}
    \phi(x)=1_{E}\implies x=1_{L} \text{ for all } x\in L.
  \end{align*}

Then for each coatom $m\in E$ there exists a coatom in 
 $q\in L$ such that $\phi(q)=m$.

\item \label{B1iv} Assume condition $(\dagger)$ above and
  \begin{align*}\tag{\ddag}
    \phi(q)\leq\phi(q')\implies \phi(q)=\phi(q') \text{ for all
      coatoms } q,q'\in L.
  \end{align*}
Then $\phi(q)$ is a coatom in $E$ if $q$ is a
coatom  in $L$. Moreover, the set of all coatoms of $E$ is $\{\phi(q)\mid q \text{ coatom in } L\}$.

 \end{enumerate}
\end{proposition}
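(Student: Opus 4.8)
The plan is to prove the four parts in order, since each builds on the previous ones, and each is a relatively short order-theoretic argument once the right element is pinned down.

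\textbf{Part \eqref{B1i}.} First I would establish $\phi^{-1}(m)\neq\emptyset$, which is immediate from surjectivity of $\phi$. The heart is showing that a maximal $q\in\phi^{-1}(m)$ is \meet-irreducible in $L$. I would argue by contradiction: suppose $q=q_1\meet q_2$ with $q<q_1,q_2$. Applying the \meet-homomorphism property, $m=\phi(q)=\phi(q_1)\meet\phi(q_2)$. By maximality of $q$, neither $q_1$ nor $q_2$ lies in $\phi^{-1}(m)$, so $\phi(q_i)>m$ (using monotonicity, $\phi(q_i)\geq\phi(q)=m$, and the strict inequality since $q_i\notin\phi^{-1}(m)$). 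Then $m=\phi(q_1)\meet\phi(q_2)$ is a meet of two elements strictly above $m$, contradicting \meet-irreducibility of $m$ in $E$. The only subtlety is confirming that $q_1,q_2$ can be taken $\geq q$ rather than merely incomparable covers, but any nontrivial meet decomposition $q=q_1\meet q_2$ has $q_i\geq q$ automatically.

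\textbf{Part \eqref{B1ii}.} Here I would show $m$ is \meet-irreducible in $E$ directly from the hypothesis that the whole fiber $\phi^{-1}(m)$ consists of \meet-irreducibles. Suppose $m=e_1\meet e_2$ with $e_1,e_2>m$. By surjectivity choose preimages, but more usefully I would pick a maximal $q\in\phi^{-1}(m)$ (nonempty by surjectivity) and lift $e_1,e_2$; the cleanest route is to take any $q\in\phi^{-1}(m)$ and preimages $q_1\in\phi^{-1}(e_1)$, $q_2\in\phi^{-1}(e_2)$, then consider $q\meet q_1\meet q_2$. Since $\phi$ preserves meets, $\phi(q\meet q_1\meet q_2)=m\meet e_1\meet e_2=m$, so this element lies in $\phi^{-1}(m)$ and is therefore \meet-irreducible. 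But I want a contradiction, so I would instead argue that a \meet-irreducible element of $L$ in the fiber cannot map to a properly decomposable element; the main obstacle in this part is routing the lifting correctly so that the \meet-irreducibility of the chosen fiber element forces $e_1=m$ or $e_2=m$.

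\textbf{Parts \eqref{B1iii} and \eqref{B1iv}.} For \eqref{B1iii}, given a coatom $m\in E$, I would take a maximal $q\in\phi^{-1}(m)$ and show it is a coatom in $L$ using condition $(\dagger)$. Since $m<1_E$ and $\phi(q)=m$, condition $(\dagger)$ (contrapositive) gives $q\neq 1_L$, so $q<1_L$. To see $q$ is a coatom, suppose $q<q'<1_L$; then $\phi(q')\geq m$, and if $\phi(q')=m$ this contradicts maximality of $q$ in the fiber, while if $\phi(q')>m$ then $\phi(q')=1_E$ (as $m$ is a coatom), forcing $q'=1_L$ by $(\dagger)$, a contradiction. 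For \eqref{B1iv}, I would first show $\phi$ sends coatoms to coatoms: if $q$ is a coatom, then $\phi(q)<1_E$ by $(\dagger)$, and any $e$ with $\phi(q)<e<1_E$ would pull back to an element strictly between $q$ and $1_L$, impossible; here condition $(\ddag)$ guarantees $\phi(q)$ is not comparable below another coatom image in a way that would prevent maximality. Finally, combining \eqref{B1iii} (every coatom of $E$ is $\phi$ of some coatom of $L$) with the forward direction just proved yields the exact description $\{\phi(q)\mid q\text{ coatom in }L\}$. I expect the genuine work to be in verifying that $(\ddag)$ is exactly what is needed to rule out a coatom $q$ of $L$ mapping strictly below the image of another coatom, thereby ensuring $\phi(q)$ is itself maximal below $1_E$.
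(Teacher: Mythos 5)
Your parts (i) and (iii) are essentially sound. Part (i) is the paper's argument run in contrapositive form: the paper instead writes a maximal $q\in\phi^{-1}(m)$ as a meet of \meet-irreducibles $q_1\meet\dots\meet q_s$ and uses \meet-irreducibility of $m$ to find $i$ with $\phi(q_i)=m$ and hence $q=q_i$; your direct version (a proper decomposition $q=q_1\meet q_2$ would force $m=\phi(q_1)\meet\phi(q_2)$ with both factors strictly above $m$) is equivalent and arguably cleaner. Part (iii) matches the paper almost verbatim; the only slip is that $q\neq 1_L$ follows from $\phi(1_L)=1_E$ together with $m<1_E$, not from the contrapositive of $(\dagger)$.

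The genuine gaps are in (ii) and (iv). In (ii) you never close the argument, and the route you sketch fails as stated: from $m=e_1\meet e_2$ you form $x:=q\meet q_1\meet q_2$ with $\phi(q)=m$ and $\phi(q_i)=e_i$; this $x$ is indeed \meet-irreducible by hypothesis, but \meet-irreducibility applied to $x=q\meet(q_1\meet q_2)$ may simply return $x=q$ (i.e.\ $q\leq q_1\meet q_2$), which yields no contradiction. The superfluous factor $q$ is the obstacle you sensed; the paper takes $q:=q_1\meet q_2$ alone, gets $\phi(q)=e_1\meet e_2=m$, so $q$ is \meet-irreducible by hypothesis, so $q=q_1$ or $q=q_2$, so $m=e_1$ or $m=e_2$. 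In (iv), the claim that any $e$ with $\phi(q)<e<1_E$ ``would pull back to an element strictly between $q$ and $1_L$'' is false in general: surjectivity provides some preimage of $e$, but nothing places it above $q$, and a residual mapping need not preserve joins, so you cannot manufacture one. The correct argument --- which your closing sentence gestures at but does not carry out --- is the paper's: $\phi(q)\neq 1_E$ by $(\dagger)$, so $\phi(q)\leq m$ for some coatom $m$ of $E$; by (iii), $m=\phi(q')$ for some coatom $q'$ of $L$; then $(\ddag)$ forces $\phi(q)=\phi(q')=m$, so $\phi(q)$ is a coatom, and combining this with (iii) gives the description of the set of coatoms of $E$.
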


\begin{proof}
  \eqref{B1i}: Let $m\in E$ be \meet-irreducible. Then
  $\phi^{-1}(m)=\{q'\in L\mid \phi(q')=m\}$ is nonempty because $\phi$
  is surjective. Let $q$ be maximal in
  $\phi^{-1}(m)$. Then $q$ is the
  meet of \meet-irreducible elements, say $q=q_{1}\meet \dots\meet
  q_{s}$ with \meet-irreducible $q_{i}\in L$ ($i\in\{1,\dots,s\}$).
  It follows $m=\phi(q)=\phi(q_{1})\meet\dots\meet \phi(q_{s})$.
  Because $m$ is \meet-irreducible there exists $i\in\{1,\dots,s\}$
  such that $m=\phi(q_{i})$. Since $q\leq q_{i}$ and $q$ was chosen
  maximal with respect to $\phi(q)=m$, we have $q=q_{i}$, i.e., it is
  \meet-irreducible.

\eqref{B1ii}:  Let $m=m_{1}\land m_{2}$ for some $m_{1}, m_{2}\in E$. Since $\phi$ is
surjective, there exist $q_{i}\in L$ with $\phi(q_{i})=m_{i}$,
$i\in\{1,2\}$. Let $q:=q_{1}\land q_{2}$. Then
$\phi(q)=\phi(q_{1}\land q_{2})=\phi(q_{1})\land
\phi(q_{2})=m_{1}\land m_{2}=m$ and $q$ must be \meet-irreducible by
assumption. Consequently, there is $i\in\{1,2\}$ with $q=q_{i}$, thus
$m=\phi(q)=\phi(q_{i})=m_{i}$, i.e., $m$ is \meet-irreducible.

\eqref{B1iii}: Since $L$ is finite, by \eqref{B1i} there exists a
maximal \meet-irreducible $q\in L$ with 
$\phi(q)=m$. If $q$ were not a coatom then there would exist a 
$q'\in L$ with $q<q'<1_{L}$. By the maximality property of $q$, we
get $\phi(q')>m$, thus $\phi(q')=1_{E}$ (since $m$ is coatom) and by
the assumption from \eqref{B1iii} we would get $q'=1_{L}$, a contradiction.

\eqref{B1iv}: Let $q\in L$ be a coatom. Then $\phi(q)\neq 1_{E}$ because of
$(\dagger)$. Thus there exists some coatom $m$ in $E$ with
$\phi(q)\leq m$. By \eqref{B1iii} there exists a coatom $q'$ in $L$ such that
$\phi(q')=m$. Then $\phi(q)\leq\phi(q')$ and with (\ddag) we get that
$\phi(q)=\phi(q')=m$ is a coatom in $E$. This together with \eqref{B1iii} shows
that $\{\phi(q)\mid q \text{ coatom in } L\}$ is the set of all
coatoms of $E$.
\end{proof}

\begin{remark}\label{B1a} Concerning~\ref{B1}\eqref{B1i}, since $L$ is
  finite, for any
  $q'\in\phi^{-1}(m)$ there exists a maximal (and therefore
  \meet-irreducible) $q$ with $q'\leq q\in\phi^{-1}(m)$.
  
\end{remark}

\begin{application}\label{B2}
There are many applications of residual mappings in various contexts,
in particular in connection with the unique corresponding residuated
mapping (establishing a ``covariant Galois connection''). However, for
this paper the only example which we need is the above mentioned
residual mapping
\begin{align*} \label{Phi}\tag{*}
  \Phi:\cL\to\cE: Q\mapsto Q\cap\Eq(A)\,,
\end{align*}
(recall $\cL:=\{\Quord(A,F)\mid F\subseteq A^{A}\}$ and
$\cE:=\{\Con(A,F)\mid F\subseteq A^{A}\}$ from~\ref{A00}). Clearly,
$\Phi(\Quord(A,F))=\Con(A,F)\in\cE$. The next Lemma shows that the
assumptions (\textdagger) and (\ddag) in
Proposition~\ref{B1}\eqref{B1iii},\eqref{B1iv} are
satisfied for this 
example. Notice that $\Eq(A)$ and $\Quord(A)$ are the greatest elements
of the lattices $\cE$ and $\cL$, respectively.
\end{application}

\begin{lemma}\label{B3}
  \begin{itemize}
 \item[\rm (i)] For $Q\in\cL$, $\Phi(Q)=\Eq(A)$ implies $Q=\Quord(A)$.
  \item[\rm (ii)] For coatoms $Q,Q'$ in $\cL$,
    $\Phi(Q)\subseteq\Phi(Q')$ implies $\Phi(Q)=\Phi(Q')$.
  \end{itemize}
\end{lemma}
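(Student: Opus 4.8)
The plan is to handle the two parts by different means: (i) is about a single quasiorder lattice and reduces to a characterisation of the maps that preserve every equivalence, whereas (ii) concerns the comparison of two coatoms and reduces, via the earlier lemmas, to the identity $\End\Con(A,f)=\End\Quord(A,f)$.

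For (i): the hypothesis $\Phi(Q)=\Eq(A)$ says exactly that $\Eq(A)\subseteq Q$, i.e.\ every equivalence relation lies in $Q=\Quord(A,F)$; equivalently every $f\in F$ satisfies $f\preserves\kappa$ for all $\kappa\in\Eq(A)$. First I would show that a map preserving every equivalence relation must be trivial (the identity or a constant), at least when $|A|\ge 3$, which is the relevant range. This is a short computation with two-block equivalences $[a,b]$: if $fa\neq fb$, then preservation of $[a,b]$ forces $\{fa,fb\}=\{a,b\}$, and comparing this against the equivalences $[a,c]$ and $[b,c]$ for a third element $c$ rules out both non-injectivity of $f$ (which would force $f$ constant) and any non-identity permutation (which would force $f=\id_{A}$). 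Since, by Definition~\ref{A00}, the trivial maps are precisely those preserving every quasiorder, we obtain $F\subseteq\End\Quord(A)$ and hence $Q=\Quord(A,F)=\Quord(A)$.

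For (ii): because coatoms are \meet-irreducible, Remark~\ref{A01} lets me write the two coatoms as $Q=\Quord(A,f)$ and $Q'=\Quord(A,g)$ for single nontrivial maps $f,g$, so that $\Phi(Q)=\Con(A,f)$ and $\Phi(Q')=\Con(A,g)$. The hypothesis thus reads $\Con(A,f)\subseteq\Con(A,g)$, which by the Galois connection of Remark~\ref{A01} (and the characterisation in Remark~\ref{A2a}) means $g\in\End\Con(A,f)$. The conclusion $\Phi(Q)=\Phi(Q')$ would follow immediately if I could upgrade this to $g\in\End\Quord(A,f)$: for then $\Quord(A,f)\subseteq\Quord(A,g)$, i.e.\ $Q\subseteq Q'$, and two comparable coatoms of $\cL$ must coincide, giving $\Phi(Q)=\Phi(Q')$. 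Hence the whole of (ii) reduces to the equality $\End\Con(A,f)=\End\Quord(A,f)$ for those $f$ generating coatoms of $\cL$. Here I would invoke the classification of coatoms of $\cL$ from \cite{JakPR2016}: each coatom is $\Quord(A,f)$ for $f$ a permutation of prime power order $p^{m}$ with at least two cycles of length $p^{m}$. For exactly these $f$, Lemma~\ref{A3a} supplies $\End\Con(A,f)=\End\Quord(A,f)$, closing the argument. (Equivalently one can run Proposition~\ref{A3} directly: $\Con(A,f)\subseteq\Con(A,g)$ yields $g=f^{k}$, whence $\langle g\rangle=\langle f\rangle$ and $\Con(A,g)=\Con(A,f)$.)

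The main obstacle is ensuring that this reduction is exhaustive, i.e.\ confirming through \cite{JakPR2016} that every coatom of $\cL$ is indeed generated by such a multi-cycle prime-power permutation, so that Lemma~\ref{A3a} (equivalently Proposition~\ref{A3}) covers all cases. If the classification were also to admit coatoms generated by non-permutations, one would have to establish the same equality $\End\Con(A,f)=\End\Quord(A,f)$ for those generators; the natural tool is Lemma~\ref{A2}\eqref{A2ii}, which constrains the behaviour of any $g\in\End\Con(A,f)$ on the subalgebras of $(A,f)$ and thereby forces $g$ to preserve the generating quasiorders of $\Quord(A,f)$.
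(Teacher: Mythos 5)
Your part (i) is correct and essentially the paper's argument: both rest on the fact that only the trivial maps preserve every equivalence relation, so $\Eq(A)\subseteq Q$ forces the generating functions to be trivial and $Q=\Quord(A)$ (the paper phrases this via the Galois closure, $Q=\Quord\End Q\supseteq\Quord\End\Eq(A)=\Quord(A)$, but the content is the same).

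Part (ii), however, has a genuine gap, and it is exactly the one you flag at the end: the classification in \cite{JakPR2016} does \emph{not} say that every coatom of $\cL$ comes from a prime-power permutation. The coatoms are $\Quord(A,f)$ for $f$ of the three types (I)--(III) of Theorem~\ref{Thm1}, and types (I) and (II) are non-permutations (idempotents, resp.\ maps with $f^{2}$ constant). For those $f$ your key reduction fails twice over. First, $\End\Con(A,f)=\End\Quord(A,f)$ is simply false for type (I): with $A=\{a,z,b\}$, $fa=fz=z$, $fb=b$, the function $\hat f$ of~\ref{N1} satisfies $\Con(A,\hat f)=\Con(A,f)$ but $(\hat fa,\hat fb)=(a,z)\notin\alpha_{f}(a,b)$, so $\hat f\in\End\Con(A,f)\setminus\End\Quord(A,f)$; hence your proposed fallback of proving this equality via Lemma~\ref{A2}\eqref{A2ii} cannot succeed. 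Second, even the intermediate goal you aim for --- $Q\subseteq Q'$ and hence $Q=Q'$ --- is false: $\Quord(A,f)$ and $\Quord(A,\hat f)$ are \emph{distinct} coatoms of $\cL$ with equal $\Phi$-images (the paper's remark after Theorem~\ref{Thm1} points this out). So (ii) cannot be obtained by pushing the inclusion up to $\cL$; one must show directly that $\Con(A,f)\subseteq\Con(A,g)$ for $f,g$ of types (I)--(III) forces $\Con(A,f)=\Con(A,g)$. That is precisely Proposition~\ref{P2}, a six-case analysis on the pair of types using the essential-triple Lemma~\ref{L:Es}, which concludes $g\in\{f,\hat f\}$ for types (I), (II) and $g=f^{i}$ for type (III); the paper's proof of (ii) is just a forward reference to it. Your argument covers only the type (III) versus type (III) case (where Lemma~\ref{A3a} / Proposition~\ref{A3} indeed apply); the remaining cases are missing and require the separate combinatorial work of Proposition~\ref{P2}.
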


\begin{proof}
  (i): It is well-known that trivial functions ($\id_{A}$ and the
  constants, say $C$) are the only mappings which
  preserves all equi\-va\-lence relations. Thus we have
 $Q=\Quord\End Q\supseteq
 \Quord\End\Phi(Q)=\Quord\End\Eq(A)=\Quord(\{\id_{A}\}\cup C)=\Quord(A)$. 

(ii): This will follow immediately from Proposition~\ref{P2} proved in
Section~\ref{sec:3}. 
\end{proof}

Note that the coatoms of $\cE$ and $\cL$ are of the form $\Con(A,f)$
and $\Quord(A,f)$ for some specific $f$ (they are of type (I)-(III) as
we shall see in Theorem~\ref{Thm1}). Thus from~\ref{B1}\eqref{B1iv} we
immediately get: 

\begin{corollary}\label{B4} $\{\Con(A,f)\mid \Quord(A,f) \text{ is a
    coatom in } \cL\}$ is the set of all coatoms of $\cE$.\qed
\end{corollary}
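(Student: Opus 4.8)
The plan is simply to apply Proposition~\ref{B1}\eqref{B1iv} to the residual mapping $\Phi:\cL\to\cE$, $Q\mapsto Q\cap\Eq(A)$, from Application~\ref{B2}. First I would confirm that all hypotheses of~\ref{B1}\eqref{B1iv} are in place. The mapping $\Phi$ is surjective, since every element of $\cE$ has the form $\Con(A,F)=\Phi(\Quord(A,F))$, and it is residual by~\ref{B2}. For this particular $\Phi$ the abstract condition $(\dagger)$ reads $\Phi(Q)=\Eq(A)\implies Q=\Quord(A)$ (recall that $\Eq(A)$ and $\Quord(A)$ are the top elements of $\cE$ and $\cL$), which is exactly Lemma~\ref{B3}(i); and the abstract condition $(\ddag)$ reads $\Phi(Q)\subseteq\Phi(Q')\implies\Phi(Q)=\Phi(Q')$ for coatoms $Q,Q'$ of $\cL$, which is exactly Lemma~\ref{B3}(ii).

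Once these are verified, Proposition~\ref{B1}\eqref{B1iv} yields directly that the set of all coatoms of $\cE$ equals $\{\Phi(Q)\mid Q\text{ is a coatom in }\cL\}$. The remaining task is purely notational: rewriting this set in the form stated in the corollary. Here I would use that every coatom of a finite lattice is \meet-irreducible, so that each coatom of $\cL$ is \meet-irreducible and hence, by the observation of Remark~\ref{A01} applied to quasiorders (since $\Quord(A,F)=\bigcap_{f\in F}\Quord(A,f)$), is of the form $\Quord(A,f)$ for a single $f\in A^{A}$. For such an $f$, Application~\ref{B2} records $\Phi(\Quord(A,f))=\Con(A,f)$, so the substitution turns $\{\Phi(Q)\mid Q\text{ coatom in }\cL\}$ into $\{\Con(A,f)\mid\Quord(A,f)\text{ is a coatom in }\cL\}$, which is the assertion of the corollary.

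I do not expect any genuine obstacle, since this corollary is engineered to drop out of the abstract results of Section~\ref{sec:2}: the real work was carried out in Proposition~\ref{B1} and Lemma~\ref{B3}. The only step deserving a moment's attention is the reduction of coatoms of $\cL$ to single-function quasiorder lattices $\Quord(A,f)$; this rests on \meet-irreducibility of coatoms together with the single-generator representation of \meet-irreducibles from Remark~\ref{A01}, and not on the finer type~(I)--(III) classification to be established later in Theorem~\ref{Thm1}, so no circularity is introduced.
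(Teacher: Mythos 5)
Your proposal is correct and follows essentially the same route as the paper: the corollary is obtained by applying Proposition~\ref{B1}\eqref{B1iv} to the residual mapping $\Phi$ of~\ref{B2}, with Lemma~\ref{B3} supplying the hypotheses $(\dagger)$ and $(\ddag)$, and then rewriting $\Phi(\Quord(A,f))$ as $\Con(A,f)$. Your justification that coatoms of $\cL$ are single-function lattices $\Quord(A,f)$ via \meet-irreducibility is a slightly more self-contained variant of the paper's remark (which points ahead to the type (I)--(III) classification), but the argument is the same in substance.
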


We close this section with two results which shall turn out to be
useful later. For an element $C$ of a lattice $\cE$ let
$\filter[\cE]{C}:=\{C'\in \cE\mid C\leq C'\}
 $ denote the principal filter generated by $C$.

\begin{lemma}\label{B5}
For $C\in\cE$ and
$Q:=\Quord\End C$ we have
$\Phi^{-1}(\filter[\cE]{C})=\filter[\cL]{Q}$.
\end{lemma}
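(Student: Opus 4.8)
The plan is to prove the set equality $\Phi^{-1}(\filter[\cE]{C})=\filter[\cL]{Q}$ by establishing the two inclusions separately, relying on three ingredients: that $\Phi$ is monotone (being a \meet-homomorphism, it is order preserving), that $\Quord(A,F)\cap\Eq(A)=\Con(A,F)$ for every $F\subseteq A^{A}$, and that $\End$ and $\Quord$ are antitone operators of the underlying Galois connection. The first step I would carry out is to record the auxiliary identity $\Phi(Q)=C$. Indeed, since $C\in\cE$ is Galois closed we have $C=\Con\End C$, and applying $\Quord(A,F)\cap\Eq(A)=\Con(A,F)$ with $F=\End C$ gives $\Phi(Q)=\Quord\End C\cap\Eq(A)=\Con\End C=C$. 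In particular $Q$ itself lies in $\Phi^{-1}(\filter[\cE]{C})$.

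For the inclusion $\filter[\cL]{Q}\subseteq\Phi^{-1}(\filter[\cE]{C})$ I would take any $Q'\in\cL$ with $Q\subseteq Q'$ and invoke monotonicity of $\Phi$ together with the identity just proved: then $C=\Phi(Q)\subseteq\Phi(Q')$, so $\Phi(Q')\in\filter[\cE]{C}$, which means exactly $Q'\in\Phi^{-1}(\filter[\cE]{C})$.

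For the reverse inclusion $\Phi^{-1}(\filter[\cE]{C})\subseteq\filter[\cL]{Q}$ I would start from an arbitrary $Q'\in\cL$ with $\Phi(Q')\in\filter[\cE]{C}$, that is $C\subseteq\Phi(Q')=Q'\cap\Eq(A)\subseteq Q'$. Applying the antitone operator $\End$ to $C\subseteq Q'$ gives $\End Q'\subseteq\End C$, and applying the antitone operator $\Quord$ once more yields $\Quord\End C\subseteq\Quord\End Q'$. Since $Q'\in\cL$ is Galois closed, $\Quord\End Q'=Q'$, whence $Q=\Quord\End C\subseteq Q'$, i.e.\ $Q'\in\filter[\cL]{Q}$, completing the proof.

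The whole argument is essentially bookkeeping within the Galois connection, so I expect no genuine obstacle; the one point that carries all the weight is the auxiliary identity $\Phi(Q)=C$, which is precisely where the Galois-closedness of $C$ in $\cE$ and the special shape $\Phi(Q)=Q\cap\Eq(A)$ of the residual map enter. Beyond that, the only care needed is to apply the two antitone steps in the correct order.
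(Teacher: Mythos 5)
Your proof is correct and follows essentially the same route as the paper: both establish the key identity $\Phi(Q)=\Eq(A)\cap\Quord\End C=\Con\End C=C$ and then use monotonicity of $\Phi$ for one inclusion and the monotone closure $\Quord\End$ together with Galois-closedness of $Q'$ for the other. The only difference is cosmetic — you spell out the two antitone steps that the paper compresses into the single inequality $\Quord\End C\subseteq\Quord\End Q'$.
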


\begin{proof}
Recall $C\in \cE\iff \Con\End C=C$ and $Q'\in\cL\iff \Quord\End
Q'=Q'$. Now,
  if $Q'\in\filter[\cL]{Q}$ then
   $\Phi(Q')\in \filter[\cE]{\Phi(Q)}=\filter[\cE]{C}$ since
  $\Phi$ is order preserving and $\Phi(Q)=\Eq(A)\cap\Quord\End
  C=\Con\End C = C$. Thus $Q'\in
   \Phi^{-1}(\filter[\cE]{C})$. 
Conversely, if $Q'\in \Phi^{-1}(\filter[\cE]{C})$, then
$C\subseteq \Phi(Q')\subseteq Q'$ and we get $Q=\Quord\End C\subseteq
\Quord\End Q'=Q'$, i.e., $Q'\in\filter[\cL]{Q}$.
\end{proof}

\begin{corollary}\label{B6}
For $F\subseteq A^{A}$ we have
  $\End\Quord(A,F)=\End\Con(A,F)$ if and only if
  $\Phi^{-1}(\filter[\cE]{\Con(A,F)})=\filter[\cL]{\Quord(A,F)}$. 
\end{corollary}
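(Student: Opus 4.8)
The goal is to prove Corollary~\ref{B6}, which asserts the equivalence
\[
\End\Quord(A,F)=\End\Con(A,F)\iff
\Phi^{-1}(\filter[\cE]{\Con(A,F)})=\filter[\cL]{\Quord(A,F)}.
\]
My plan is to reduce this directly to Lemma~\ref{B5} by identifying the correct element $Q$ to plug into that lemma. Writing $C:=\Con(A,F)$ and $Q:=\Quord(A,F)$, I first observe that $C\in\cE$ and $Q\in\cL$, and that $Q$ is Galois closed, i.e.\ $\Quord\End Q=Q$. Lemma~\ref{B5} applies to $C$ and to the element $\Quord\End C$; so the natural strategy is to compare $Q=\Quord(A,F)$ with $\Quord\End C=\Quord\End\Con(A,F)$.

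The key computation is to relate $\End Q$ and $\End C$. Since $\Con(A,F)=\Quord(A,F)\cap\Eq(A)=\Phi(Q)\subseteq Q$, monotonicity of $\End$ (which reverses inclusion) gives $\End Q\subseteq\End\Con(A,F)=\End C$ always. Conversely, the left-hand side of the corollary, $\End Q=\End C$, is exactly the statement that this inclusion is an equality. Now I apply $\Quord$: from $\End Q=\End C$ one gets $\Quord\End Q=\Quord\End C$, and since $Q$ is Galois closed the left side is just $Q$. Hence $\End\Quord(A,F)=\End\Con(A,F)$ is equivalent to $Q=\Quord\End C$, which is precisely the hypothesis needed to invoke Lemma~\ref{B5} with this $C$ and $Q$.

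With that equivalence in hand the argument closes quickly. If $\End Q=\End C$, then $Q=\Quord\End C$, so Lemma~\ref{B5} directly yields $\Phi^{-1}(\filter[\cE]{C})=\filter[\cL]{Q}$, which is the right-hand side. For the converse I would argue contrapositively: Lemma~\ref{B5} computes $\Phi^{-1}(\filter[\cE]{C})=\filter[\cL]{Q_{0}}$ where $Q_{0}:=\Quord\End C$, and since $C\subseteq Q$ we always have $Q_{0}=\Quord\End C\subseteq\Quord\End Q=Q$, whence $\filter[\cL]{Q}\subseteq\filter[\cL]{Q_{0}}=\Phi^{-1}(\filter[\cE]{C})$. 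If the two filters coincide, then $\filter[\cL]{Q_{0}}=\filter[\cL]{Q}$ forces $Q_{0}=Q$ (a principal filter in a finite lattice determines its generator as its least element), i.e.\ $\Quord\End C=Q=\Quord\End Q$; applying $\End$ and using the Galois closure relations then gives back $\End C=\End Q$.

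The only point requiring care is the bookkeeping between the two closure operators: one must keep straight that $\End$ is inclusion-reversing while $\Quord$ and $\Con$ are inclusion-preserving, and that the relevant Galois-closedness facts ($\Quord\End Q=Q$ and $\Con\End C=C$, recalled in Remark~\ref{A01} and the proof of Lemma~\ref{B5}) are exactly what let the compositions collapse. I do not expect a genuine obstacle here; the corollary is essentially a transcription of Lemma~\ref{B5} once the substitution $Q=\Quord\End\Con(A,F)$ is recognized as being equivalent to the endomorphism-monoid equality.
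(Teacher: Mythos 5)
Your proof is correct and follows essentially the same route as the paper: both directions reduce to Lemma~\ref{B5} via the observation that $\End\Quord(A,F)=\End\Con(A,F)$ is equivalent to $\Quord(A,F)=\Quord\End\Con(A,F)$. The only cosmetic difference is in the converse, where you identify $\Phi^{-1}(\filter[\cE]{C})$ as the principal filter of $Q_{0}=\Quord\End C$ and compare generators, while the paper checks directly that $Q_{0}\in\Phi^{-1}(\filter[\cE]{C})$ and then chases a chain of $\End$-inclusions; both are valid.
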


\begin{proof}
  Let $Q:=\Quord(A,F)$ and $C:=\Con(A,F)$. Then we have
  $C=\Phi(Q)\subseteq Q$.
 If $\End Q=\End C$ then $Q=\Quord\End Q=\Quord\End C$ and we get 
$\Phi^{-1}(\filter[\cE]{C})=\filter[\cL]{Q}$ from
Lemma~\ref{B5}. Conversely,
assume $\Phi^{-1}(\filter[\cE]{C})=\filter[\cL]{Q}$ and let
$Q':=\Quord\End C$. Then $\Phi(Q')=\Con\End C=C$, thus
$Q'\in\Phi^{-1}(\filter[\cE]{C})$. Consequently,
$Q'\in \filter[\cL]{Q}$, i.e., $Q\subseteq Q'=\Quord\End C$, and we
get $\End Q\supseteq \End\Quord\End C\supseteq\End C\supseteq\End Q$
(the latter inclusion follow from $C\subseteq Q$),
hence $\End Q=\End C$.
\end{proof}

\section{Atoms and coatoms of $\cE$} \label{sec:3}

In this section we are going to describe the atoms (\join-irreducibles) and coatoms of $\cE$.
The case $|A|=2$ is trivial. Then $\cE$ consists only of one lattice,
namely
$\Con(A,A^{A})=\Con(A,\id_A)=\Eq(A)=\{\Delta,\nabla\}$. Therefore, in
the following we assume always $|A|\geq 3$. 

The \join-irreducibles  
are easily described. In the following theorem, $A$ may be an
 arbitrary, not necessarily finite, set.

\begin{theorem}\label{Thm2}
The completely \join-irreducibles of $\cE$ are exactly the congruence
lattices of the form
\begin{align*}
\tag{*}
E_{\kappa}:=\Con(A,\End \kappa)
  =\{\Delta, \kappa, \nabla\}  
\end{align*}
where $\kappa\in\Eq(A)\setminus\{\Delta,\nabla\}$ is an arbitrary
equivalence relation. Moreover, each \join-irreducible is an atom in
$\cE$, i.e. the lattice $\cE$ is atomistic.
\end{theorem}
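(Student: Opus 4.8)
The plan is to prove Theorem~\ref{Thm2} in three parts: first establish the displayed formula $E_{\kappa}=\{\Delta,\kappa,\nabla\}$, then show every such $E_{\kappa}$ is completely \join-irreducible, and finally conclude atomisticity by combining with the structural observation from Remark~\ref{A01} that completely \join-irreducibles must be of the form $E_{\kappa}$.

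First I would verify the formula $\Con(A,\End\kappa)=\{\Delta,\kappa,\nabla\}$. The inclusion $\supseteq$ is immediate: $\Delta$ and $\nabla$ lie in every congruence lattice, and $\kappa\in\Con(A,\End\kappa)$ since by definition every $f\in\End\kappa$ preserves $\kappa$. For the reverse inclusion I must show that if $\mu\in\Con(A,\End\kappa)$ then $\mu\in\{\Delta,\kappa,\nabla\}$. The key step is to exhibit enough endomorphisms of $\kappa$ to force this. Given a congruence $\mu$ preserved by all of $\End\kappa$, suppose $\mu\neq\Delta$, so there is a pair $(a,b)\in\mu$ with $a\neq b$; I want to show $\kappa\subseteq\mu$ and, if $\mu\not\subseteq\kappa$, that $\mu=\nabla$. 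The idea is that for any pair $(c,d)\in\kappa$ I can build a map $f\in\End\kappa$ sending $a\mapsto c$, $b\mapsto d$ (and chosen suitably elsewhere, e.g.\ collapsing each $\kappa$-class to a single point inside its image), which then forces $(c,d)=(fa,fb)\in\mu$; conversely if $(a,b)\notin\kappa$ I can use constant-like or class-mixing endomorphisms to generate an arbitrary pair, giving $\mu=\nabla$. The main obstacle is the careful construction of these endomorphisms of $\kappa$ so that they both preserve $\kappa$ and realize the required image pairs; one must check that mapping representatives across classes while respecting $\kappa$ is always possible, which is where the hypothesis $\kappa\neq\Delta,\nabla$ is used.

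Next I would argue complete \join-irreducibility of $E_{\kappa}$. Since $E_{\kappa}=\{\Delta,\kappa,\nabla\}$ is a three-element chain with $\Delta$ as bottom, it has a unique atom $\kappa$, hence $E_{\kappa}$ covers $\{\Delta,\nabla\}$ as a lattice. But more to the point, in the ambient lattice $\cE$ the element $E_{\kappa}$ is completely \join-irreducible precisely because $\End E_{\kappa}=\End\kappa$ (the only nontrivial relation inside $E_{\kappa}$ is $\kappa$), so $E_{\kappa}=\Con\End\kappa$ has a unique lower cover, namely the bottom element $\Con(A,A^{A})=\{\Delta,\nabla\}$. Indeed, any $E\in\cE$ with $E\subsetneq E_{\kappa}$ cannot contain $\kappa$ (otherwise $E\supseteq\Con\End\kappa=E_{\kappa}$), so $E=\{\Delta,\nabla\}$; thus $E_{\kappa}$ has a unique element below it and cannot be a join of strictly smaller elements, i.e.\ it is completely \join-irreducible.

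Finally I would assemble the two directions. Remark~\ref{A01} already records that every completely \join-irreducible element of $\cE$ must be of the form $\Con\End\kappa=E_{\kappa}$ for a single $\kappa\in\Eq(A)\setminus\{\Delta,\nabla\}$, and the previous paragraph shows each such $E_{\kappa}$ is in fact completely \join-irreducible; this proves the characterization~$(*)$. For atomisticity, note each $E_{\kappa}=\{\Delta,\kappa,\nabla\}$ covers the bottom element $\{\Delta,\nabla\}$ of $\cE$, so every \join-irreducible is an atom. Since in a finite lattice every element is the join of the \join-irreducibles below it, and here these are all atoms, every element of $\cE$ is a join of atoms, i.e.\ $\cE$ is atomistic. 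The only subtlety for infinite $A$ is to use \emph{complete} \join-irreducibility and the fact from Remark~\ref{A01} that $\Con\End E=E$ expresses each $E$ as the join of the $E_{\kappa}$ with $\kappa\in E$, each of which is an atom.
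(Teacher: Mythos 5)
Your proposal is correct and follows the paper's overall decomposition (use Remark~\ref{A01} for the necessity direction, establish $E_{\kappa}=\{\Delta,\kappa,\nabla\}$, observe that these are atoms, and derive atomisticity from $E=\Con\End E=\bigvee_{\kappa\in E}E_{\kappa}$). The one place where you genuinely diverge is the identity $\Con(A,\End\kappa)=\{\Delta,\kappa,\nabla\}$: the paper imports this from an external result of P\"oschel--Radeleczki, which in fact proves the stronger statement $\Quord(A,\End\kappa)=\{\Delta,\kappa,\nabla\}$, whereas you prove the congruence version directly by constructing endomorphisms of $\kappa$. Your construction is sound: if $(a,b)\in\mu\setminus\Delta$ lies in $\kappa$, the map collapsing every $\kappa$-class other than $[a]_{\kappa}$ to a point of $[c]_{\kappa}$ and sending $a\mapsto c$, $b\mapsto d$ realizes any $(c,d)\in\kappa$, giving $\kappa\subseteq\mu$; if $(a,b)\in\mu$ crosses two $\kappa$-classes, mapping $[a]_{\kappa}$ constantly to $c$ and $[b]_{\kappa}$ constantly to $d$ realizes any pair, giving $\mu=\nabla$. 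What your route buys is self-containedness (and it only needs the equivalence-relation case, which is all the theorem requires); what the citation buys is brevity plus the quasiorder-level fact, which the paper reuses implicitly elsewhere. One small inaccuracy: the hypothesis $\kappa\notin\{\Delta,\nabla\}$ is not really needed for the endomorphism constructions themselves (they degenerate harmlessly in the trivial cases); it is needed only so that $E_{\kappa}$ properly contains the bottom element $\{\Delta,\nabla\}$ and hence is a genuine atom.
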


\begin{proof}  Completely \join-irreducibles must be
  of the form $E_{\kappa}$ as noted in~\ref{A01}.
 The characterization (*) follows immediately from \cite[Corollary
 2.5]{PoeR08}, where it is shown that $\Quord(A,\End \kappa)=\{\Delta,
 \kappa, \nabla\}$ (and therefore equals $\Con(A,\End \kappa)$) for
 any equivalence relation $\kappa$. Clearly, such lattices are atoms
 in $\cE$ 
 and therefore \join-irreducible,
 since $\{\Delta, \nabla\}$ is the only proper sublattice.
\end{proof}

\begin{remark}\label{A4}
For $L\in\cE$ let $\At(L):=\{E_{\kappa}\mid E_{\kappa}\subseteq L,\,
\kappa\in\Eq(A)\}=\{E_{\kappa}\mid \kappa\in L\setminus\{\Delta,\nabla\}\}$ be the set of atoms contained in $L$. It is natural
to ask which sets of atoms are of the form $\At(L)$ for some
$L\in\cE$. Equivalently, for given $E\subseteq \Eq(A)$, we may ask for
$\At(\Con(A,\End E))$. Formally we put
$[E]:=\{\Delta,\nabla\}\cup\At(\Con(A,\End E))$ because then $E\mapsto [E]$
is a closure operator which is well-known: $[E]$ coincides with the
Galois closures of the Galois connection $\End-\Inv$ as well as of
$\Pol-\Inv$ restricted to equivalence relations (because $\Con F =\Inv
F\cap \Eq(A)$), cf.~\cite{PoeK79} or \cite{Poe04a}, and can be
explicitly described by so-called graphical compositions as shown by
\textsc{H. Werner} in~\cite{Wer76}.

\end{remark}

Now we describe the coatoms.

\begin{theorem}\label{Thm1}
    The coatoms of $\cE$ are exactly the congruence lattices of the form
  $\Con(A,f)$ where $f\in A^A$ satisfies
  \begin{itemize}
\item[\rm(I)] $f$ is nontrivial and $f^2=f$, or
\item[\rm(II)] $f$ is nontrivial, $f^2$ is a constant, say $0$, and
  $|\kernelclass 0|\geq 3$, or 
\item[\rm(III)] $f^p=\id_A$ for some prime $p$ such that the
  permutation $f$ has at least two cycles of length $p$.
  \end{itemize}
\end{theorem}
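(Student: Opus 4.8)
The plan is to not attack $\cE$ directly, but to transport the question to the quasiorder lattice $\cL$ through the residual mapping $\Phi\colon\cL\to\cE$ and then to invoke the already-known description of the coatoms of $\cL$. The decisive tool is Corollary~\ref{B4}, which states that the coatoms of $\cE$ are exactly the lattices $\Con(A,f)$ for which $\Quord(A,f)$ is a coatom of $\cL$. Hence the whole theorem reduces to the single equivalence: $\Quord(A,f)$ is a coatom of $\cL$ if and only if $f$ is of type (I), (II) or (III). This is precisely the classification of the coatoms of $\cL$ carried out in \cite{JakPR2016}, so the main step is to quote that classification and to check that its list of admissible maps coincides with (I)--(III) as stated here.

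Spelling out the two directions: for sufficiency, let $f$ be of type (I), (II) or (III). By \cite{JakPR2016} the quasiorder lattice $\Quord(A,f)$ is then a coatom of $\cL$. Since $\Phi$ is a surjective residual map for which $(\dagger)$ and $(\ddag)$ hold (Lemma~\ref{B3}), Proposition~\ref{B1}\eqref{B1iv} gives that $\Phi(\Quord(A,f))=\Con(A,f)$ is a coatom of $\cE$. For necessity, let $E\in\cE$ be a coatom. By Corollary~\ref{B4} there is a map $f$ with $E=\Con(A,f)$ and $\Quord(A,f)$ a coatom of $\cL$; the classification of \cite{JakPR2016} then forces this $f$ to be of type (I), (II) or (III). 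Putting the two implications together yields the asserted characterization.

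The work that is really hidden here is of two kinds. First, Corollary~\ref{B4} is only available once $(\ddag)$ --- that is, Lemma~\ref{B3}(ii) --- has been secured, and that lemma is not proved in place but is deferred to Proposition~\ref{P2}. I expect the content of Proposition~\ref{P2} to be a case analysis showing that for two coatom-generating maps $f,g$ an inclusion $\Con(A,f)\subseteq\Con(A,g)$ already forces equality, so that the images of distinct $\cL$-coatoms under $\Phi$ never nest; Lemma~\ref{A2} (a subalgebra is either preserved by $g$ or collapsed to a constant lying outside it) and, for type (III), Lemma~\ref{A3a} together with Proposition~\ref{A3} should be the natural ingredients. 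Second, one has to pin down the numerical threshold in type (II): a nontrivial $f$ with $f^2$ constant $0$ but $|\kernelclass 0|=2$ must be ruled out. The reason is that such an $f$ is dominated by a type-(I) retraction $g$, so that $\Quord(A,f)\subsetneq\Quord(A,g)$ and $\Con(A,f)\subsetneq\Con(A,g)$, whence $\Quord(A,f)$ fails to be maximal; only $|\kernelclass 0|\geq 3$ makes the lattice maximal. Confirming this boundary, and matching it against the list in \cite{JakPR2016}, is the point at which the otherwise formal reduction needs genuine verification.
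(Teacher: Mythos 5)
Your proposal follows essentially the same route as the paper's own proof: the paper likewise quotes the classification of the coatoms of $\cL$ from \cite[Theorem~3.1]{JakPR2016} as exactly the $\Quord(A,f)$ with $f$ nontrivial of types (I)--(III), and then invokes Corollary~\ref{B4} to transport this to $\cE$. Your remarks on the hidden work --- that Corollary~\ref{B4} rests on condition (\ddag), i.e.\ Lemma~\ref{B3}(ii), whose proof is deferred to the case analysis of Proposition~\ref{P2} --- coincide with the authors' own comment immediately following their proof.
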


Remark: It can happen that different functions $f$ from the theorem
give the same coatom. This explicitly will be clarified in
Proposition~\ref{P2}. The theorem describes three types of functions,
the graphs of which are shown in Figure~\ref{fig:1} (all labeled
elements are mandatory, all others are optional). Moreover, there are
functions which define coatoms but which are of none of the types
(I)-(III), e.g., on $A=\{0,1,2\}$ we get a coatom $\Con(A,f)=\Con(A,g)$ for
$f:1\mapsto 0\mapsto 0$, $2\mapsto 2$ and $g:1\mapsto 2\mapsto
0\mapsto 2$ where $f$ is of type (I), but $g$ is of no type.

\begin{figure}[htb]
  \centering
  \input{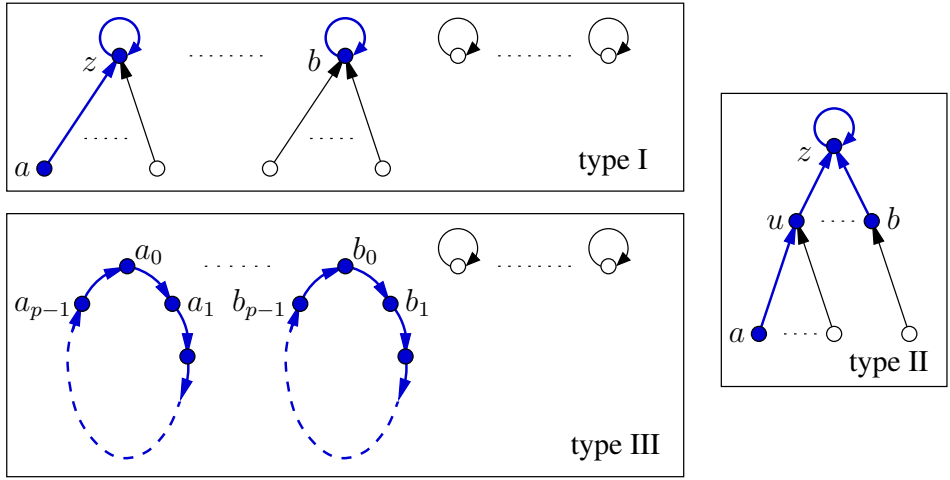}
  \caption{The graphs of functions of type I, II and III}
  \label{fig:1}
\end{figure}

\begin{proof} The coatoms in the lattice $\cL$ of quasiorder lattices
 are known from \cite[Theorem~3.1]{JakPR2016} and can be
described exactly as $\Quord(A,f)$ for the nontrivial functions $f$ of the three
types (I)-(III). Thus the Theorem immediately follows
from Corollary~\ref{B4}.
\end{proof}

Note that the proof is based on Corollary~\ref{B4} which follows from
\ref{B1}\eqref{B1iv} and needs the condition (\ddag) in its concrete form in
Lemma~\ref{B3}(ii). This will be proved with Proposition~\ref{P2}
below. Moreover, in~\ref{P2} it will 
be clarified when two different functions of type (I)-(III) give the
same congruence lattice $\Con(A,f)$ (by \ref{B1}\eqref{B1iii} and \ref{B3}(i) we 
already know that each coatom in $\cE$ must be of the form $\Phi(Q)=\Con(A,f)$
for some function $f$ of type (I)-(III)).
However, before stating \ref{P2} we need some more notions, 
notations and a lemma. 

\begin{subpart}  \label{N1}
If $f\in A^{A}$ is of type (I) and has exactly one nontrivial
  component $K_{f}(z)$ with fixed point $z$, then let $\hat f$ be defined by
  \begin{align*}
   {\hat f}x:= \begin{cases}
                z&\text{if } fx=x\\
                x&\text{otherwise},
                \end{cases}
  \end{align*}
see Figure~\ref{fig:1A}. If $f\in A^{A}$ is of type (II) with
two-element image $f[A]=\{u,z\}$, where $z$ shall denote the fixed
point, then let $\hat f$ be defined by
  \begin{align*}
   {\hat f}x:= \begin{cases}
                z&\text{if } fx=u\\
                u&\text{otherwise}.
                \end{cases}
  \end{align*}

In all other cases we put $\hat f:=f$. From the Figure~\ref{fig:1A}
is clear that $\hat{\hat f}=f$, moreover $(A,f)$ and $(A,\hat f)$ have
the same principal congruences, thus $\Con(A,f)=\Con(A,\hat f)$.

\end{subpart}

\begin{figure}[htb]
  \centering
  \input{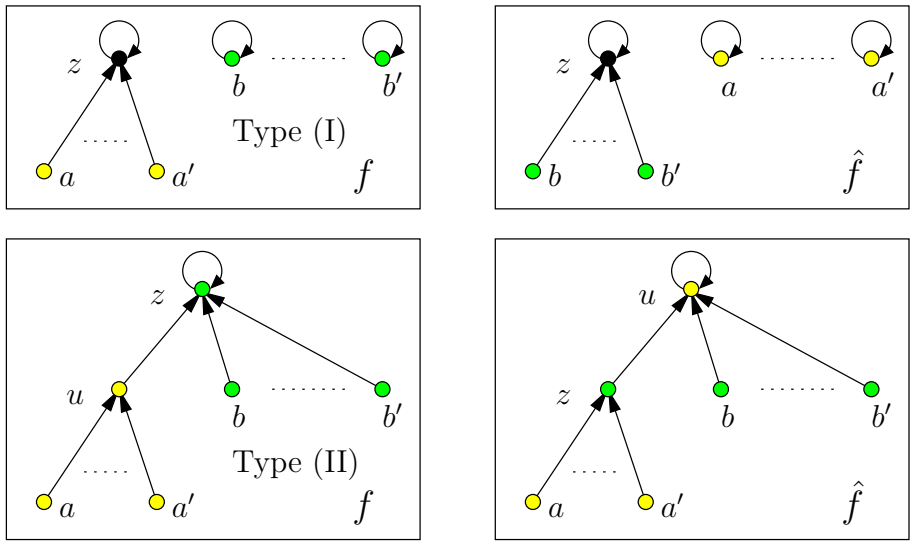}
  \caption{The functions $\hat f$}
  \label{fig:1A}
\end{figure}

\begin{definition} 
For a function $g\in A^{A}$ of type
  (I) or (II), respectively, a triple $(x,z,y)$ of three different
  elements is called
  \New{essential} for $g$ (or 
  \New{$g$-essential}) if $gx=z=gz$ and $gy=y$, or $gx=y$ and $gy=z=gz$,
  respectively, see Figure~\ref{fig:Es}.
\end{definition}

\begin{figure}[htb]
  \centering
  \input{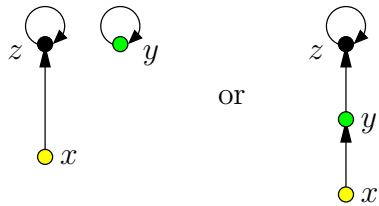}
  \caption{The graph of $g\protect\restriction_{\{x,y,z\}}$ for a $g$-essential
    triple $(x,z,y)$}
  \label{fig:Es}
\end{figure}

\begin{lemma}\label{L:Es} Let $\Con(A,f)\subseteq\Con(A,g)$ for
  functions $f,g\in A^{A}$, and let three different elements $x,y,z\in
  A$ satisfy $(x,z)\in \theta_{g}(x,y)$. Then we have:
  \begin{itemize}
  \item[\rm (a)] If $f$ is of type \rmpI, then $(x,z,y)$ or $(y,z,x)$ is
    $f$-essential. 
  \item[\rm (b)] If $f$ is of type \rmpII, then $(x,y,z)$ or $(x,z,y)$
    or $(y,x,z)$ or $(y,z,x)$ is $f$-essential (note that $z$ never
    appears in the first component of these triples).
  \end{itemize}  
\end{lemma}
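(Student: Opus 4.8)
The plan is to analyze the hypothesis $(x,z)\in\theta_g(x,y)$ directly, using the explicit description of the principal congruence of a monounary algebra from~\ref{A0}, and then transfer the resulting constraints to $f$ via Lemma~\ref{A2}\eqref{A2i}. Since we are given $\Con(A,f)\subseteq\Con(A,g)$, Remark~\ref{A2a} tells us that $(gu,gv)\in\theta_f(u,v)$ for all $u,v$; conversely, in both cases (a) and (b) the function $f$ has a very rigid shape (a single nontrivial idempotent component for type~\rmpI, or a composition collapsing everything to a two-element image for type~\rmpII), so the principal congruences $\theta_f(u,v)$ are extremely simple to compute. The strategy is therefore: first extract from $(x,z)\in\theta_g(x,y)$ what the values $gx,gy,gz$ must look like, and then read off from the definition of ``essential triple'' that the restriction of $f$ to $\{x,y,z\}$ is forced into one of the listed configurations.

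\textbf{Case (a), type \rmpI.}
Here $f^2=f$, so every $\theta_f(u,v)$ is a single nontrivial block $[u,fu,v,fv]$ identifying $u,v$ with their (fixed) images. First I would spell out $\theta_g(x,y)$ from the formula $\theta_g(x,y)=\Delta\cup\{(g^ix,g^iy)\mid i\in\N\}^{\sym\,\tra}$ and use the hypothesis that $(x,z)$ lies in it, with $x,y,z$ pairwise distinct, to see that $z$ must be $g^ix$ or $g^iy$ for some $i\ge 1$, i.e.\ $z$ sits on the forward orbit of $x$ or $y$ under $g$. The aim is to pin down the three relevant images $fx,fy,fz$. Applying Lemma~\ref{A2}\eqref{A2i} with the block $\epsilon=\theta_f(x,z)$ (or the analogous principal congruences) forces $g$ to respect the fibres of $f$, and combined with the type-\rmpI structure this should leave exactly the two symmetric possibilities $(x,z,y)$ and $(y,z,x)$ as the $f$-essential triples, matching the definition $fx=z=fz,\,fy=y$ (up to swapping the roles of $x$ and $y$).

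\textbf{Case (b), type \rmpII, and the main obstacle.}
Here $f^2$ is the constant $0$ with $|\kernelclass 0|\ge 3$, so the two-element image is $\{u,z\}$ with $z$ the fixed point and $fu=z$; the principal congruences $\theta_f(\cdot,\cdot)$ are again controlled by which of $x,y,z$ map into $u$ versus into $z$. The same scheme applies: decode $(x,z)\in\theta_g(x,y)$, then transfer via~\eqref{A2i} to constrain the $f$-images. The parenthetical remark ``$z$ never appears in the first component'' is the key structural fact to verify and will guide the casework: because $z$ is the fixed point of $f$ playing the role of the ``sink'', an essential triple must have $z$ in the middle or last slot, which is exactly what rules out the spurious orderings and leaves precisely the four triples listed. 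I expect the main obstacle to be the bookkeeping in case~(b): unlike type~\rmpI, type~\rmpII is not symmetric in $x$ and $y$, so one must carefully track all orderings of $\{x,y,z\}$ and eliminate the ones incompatible with $(x,z)\in\theta_g(x,y)$ together with the hypothesis $\Con(A,f)\subseteq\Con(A,g)$. Getting the four surviving triples to be exactly $(x,y,z),(x,z,y),(y,x,z),(y,z,x)$ — and no others — is the delicate part, and I would organize it as an explicit finite case analysis over where each of $x,y,z$ lands under $f$ relative to $\{u,z\}$.
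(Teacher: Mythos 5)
Your proposal has the right raw materials but the route you describe would not get you there, for two reasons.

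First, the detour through $g$ is both unnecessary and unworkable. The whole content of the hypothesis, as far as this lemma is concerned, is the single line $(x,z)\in\theta_{g}(x,y)\subseteq\theta_{f}(x,y)$ (the inclusion is Lemma~\ref{A2}\eqref{A2i}); after that, $g$ plays no role whatsoever. Your plan to ``first extract from $(x,z)\in\theta_g(x,y)$ what the values $gx,gy,gz$ must look like'' cannot succeed: $\theta_g(x,y)$ is a symmetric--transitive closure of the whole forward orbit, so $(x,z)\in\theta_g(x,y)$ barely constrains $gx,gy,gz$ (the element $z$ could be $g^{17}x$, say). Likewise the step ``applying Lemma~\ref{A2}\eqref{A2i} \dots forces $g$ to respect the fibres of $f$'' does not correspond to any provable implication. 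The paper's proof is exactly the one-line reduction above followed by the observation that for $f$ of type \rmpI\ or \rmpII\ the only way three \emph{distinct} elements $x,y,z$ can lie in one block of $\theta_f(x,y)$ is via the listed essential triples.

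Second, and more seriously, your description of $\theta_f(u,v)$ in case (a) is wrong, and the error is precisely at the point that makes the lemma true. For idempotent $f$, $\theta_f(u,v)$ is generated by the two pairs $(u,v)$ and $(fu,fv)$; its nontrivial blocks are $\{u,v\}$ and $\{fu,fv\}$, and these merge into a single block $[u,fu,v,fv]$ \emph{only if they intersect}. If you assume a single block unconditionally, then e.g.\ $fx=z$ with $fy=w\notin\{x,y,z\}$, $w\neq y$, would appear to satisfy $(x,z)\in\theta_f(x,y)$ although $(x,z,y)$ is not essential (since $fy\neq y$) --- so your case (a) would fail to exclude non-essential configurations. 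The merging condition is what forces $fy=y$ (or symmetrically $fx=x$), i.e.\ exactly the fixed-point clause in the definition of an essential triple. The same intersection condition drives the four-way case split in (b), where $z$ is excluded from the first slot because the fixed point of a type-\rmpII\ map is unique. If you replace your analysis of $g$ by the inclusion $\theta_g(x,y)\subseteq\theta_f(x,y)$ and then carry out the finite check on the (correctly described) block structure of $\theta_f(x,y)$, you recover the paper's proof.
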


\begin{proof}
  Because of Lemma~\ref{A2}(\ref{A2i}) we have
  $(x,z)\in\theta_{g}(x,y)\subseteq \theta_{f}(x,y)$. For functions of
  type (I) or (II) the only possibilities that all three elements $x,y,z$ belong
  to the same block of $\theta_{f}(x,y)$ are those mentioned in
  (a) and (b), cf.\ Figure~\ref{fig:Es} and Figure~\ref{fig:1}.
\end{proof}

\begin{remark} If $g\in A^{A}$ is of
  type \rmpI\ or \rmpII, then 
  the $g$-essential triples $(x,z,y):=(a,z,b)$ or $(x,z,y):=(a,z,u)$
  (notation as in Figure~\ref{fig:1}),
  respectively, satisfy the condition $(x,z)\in \theta_{g}(x,y)$ from
  Lemma~\ref{L:Es}. 
\end{remark}

\begin{proposition}\label{P2}
Let $f,g\in A^{A}$ be nontrivial operations of one of the types
{\rm (I)-(III)} such that $\Con(A,f)\subseteq \Con(A,g)$. Then $g\in\{f,\hat f\}$ if $f$ is of type
{\rm (I)} or {\rm (II)}, and $g=f^{i}$ for some $i\in\{1,\dots,p-1\}$
if $f$ is of type {\rm (III)}. In particular we always have
$\Con(A,f)=\Con(A,g)$.    
\end{proposition}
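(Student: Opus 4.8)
The plan is to dispose of type~(III) at once by invoking Proposition~\ref{A3}, and to treat types~(I) and~(II) together by translating the hypothesis into the pointwise condition of Remark~\ref{A2a} and then pinning down $g$ element by element. If $f$ is of type~(III), then $f$ is a permutation with $f^{p}=\id_{A}$, $p$ prime, having at least two cycles of length $p$; since $f$ is nontrivial its order is exactly $p=p^{1}$, so $f$ is a permutation of prime power order $p^{1}$ with at least two cycles of length $p^{1}$. Proposition~\ref{A3} then yields $g=f^{i}$ for some $i\in\{1,\dots,p-1\}$. As $p$ is prime we have $\gcd(i,p)=1$, so $f$ is itself a power of $f^{i}$; hence $f$ and $f^{i}$ preserve exactly the same equivalences, $f^{i}$ is again of type~(III), and $\Con(A,f)=\Con(A,f^{i})=\Con(A,g)$.

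Assume now $f$ is of type~(I) or~(II). By Remark~\ref{A2a} the hypothesis $\Con(A,f)\subseteq\Con(A,g)$ is equivalent to $(gx,gy)\in\theta_{f}(x,y)$ for all $x,y\in A$, and this is the constraint I would exploit throughout. The first task is to identify the type of $g$, and in particular to show $g$ cannot be a permutation (type~(III)): when $f$ is of type~(I) the two-element congruences $[z,b]$ (for fixed points $b\neq z$) and $[x,fx]$ (for non-fixed $x$) all lie in $\Con(A,f)$, and feeding them into the pointwise condition forces any permutation $g$ to fix every point, contradicting nontriviality; the type~(II) case is analogous, since the chains $\theta_{f}(b,z)$ and $\theta_{f}(a,u)$ together with injectivity again collapse $g$ to the identity. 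A parallel pointwise analysis, together with Lemma~\ref{A2}\eqref{A2ii} applied to the subalgebras $\Image f$ and to the nontrivial component $K_{f}(z)$, shows that $g$ must share the type of $f$, so in particular $g$ possesses $g$-essential triples.

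Since $g$ is now of type~(I) or~(II), it has a $g$-essential triple $(x,z,y)$, which by the remark following Lemma~\ref{L:Es} satisfies $(x,z)\in\theta_{g}(x,y)$. Lemma~\ref{L:Es} then tells us that this same triple, in one of the admissible orderings listed there, is also $f$-essential; thus the nontrivial core of $g$ must sit on the essential structure of $f$. The two admissible orderings $(x,z,y)$ versus $(y,z,x)$ in case~(a), and the four orderings in case~(b), correspond precisely to the reflection built into the definition of $\hat f$ in~\ref{N1} (sending a point to the fixed point $z$ versus fixing it). Feeding this identification back into $(gx,gy)\in\theta_{f}(x,y)$ then removes all remaining freedom: on the essential elements $g$ realises one of these two shapes, and the principal-congruence constraint determines $g$ on every remaining point. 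Hence $g\in\{f,\hat f\}$.

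The delicate part is this last step. Because $\theta_{f}(x,y)$ always contains the full diagonal, the condition $(gx,gy)\in\theta_{f}(x,y)$ permits $g$ to \emph{collapse} a pair rather than to move it the way $f$ does; isolating exactly the two solutions $f$ and $\hat f$ therefore requires a careful exhaustive check, organised according to the number of fixed points and of essential triples of $f$, and this case analysis (including the exclusion of the ``wrong'' type, e.g.\ type~(II) when $f$ is of type~(I)) is where the bulk of the work lies and is the main obstacle. Once $g\in\{f,\hat f\}$ has been established for types~(I) and~(II), and $g=f^{i}$ for type~(III), the final equality is immediate: $\Con(A,\hat f)=\Con(A,f)$ by~\ref{N1}, while $\Con(A,f^{i})=\Con(A,f)$ as shown above; so in every case $\Con(A,f)=\Con(A,g)$.
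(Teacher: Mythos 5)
Your overall strategy is the same as the paper's: dispose of type (III) by Proposition~\ref{A3} (with $m=1$), and for $f$ of type (I) or (II) analyse the pointwise condition $(gx,gy)\in\theta_{f}(x,y)$ of Remark~\ref{A2a} according to the type of $g$, using essential triples and Lemma~\ref{L:Es}. The type (III) part is complete, and your exclusion of a type (III) $g$ (the two-element principal congruences $[u,v]$ and $[x,fx]$ force an \emph{injective} $g$ to be the identity) is a correct and in fact slightly slicker route than the paper's argument with $\theta_{g}(a_{0},b_{0})$.

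The problem is that the rest of the argument is announced rather than carried out, and the missing parts are exactly the substance of the proposition. First, the claim that $g$ must share the type of $f$ (excluding the mixed cases) is supported only by the phrase ``a parallel pointwise analysis \dots shows''; the case $f$ of type (II), $g$ of type (I) is genuinely not routine --- in the paper it needs the auxiliary element $b'\notin\{a',u',z'\}$ with $fb'=z'$ (whose existence uses $|\kernelclass 0|\geq 3$) and a case split on $gb'$. Second, and more seriously, in the same-type cases you identify the two possible orientations of an essential triple and then assert that ``the principal-congruence constraint determines $g$ on every remaining point,'' immediately adding that this is ``where the bulk of the work lies and is the main obstacle.'' That obstacle is not overcome: you correctly observe that $(gx,gy)\in\theta_{f}(x,y)$ also allows $g$ to collapse a pair, so one must, for every $c$ outside the chosen essential triple, select a further essential triple or principal congruence through $c$ (e.g.\ $(a,z,c)$ or $(c,gc,b)$, and $\theta_{g}(a,c)\subseteq\theta_{f}(a,c)$ to exclude $gc\notin\{c,z\}$ in the reflected case) and show that the same orientation (*) or (**) propagates to $c$, yielding $g=f$ or $f=\hat g$ globally. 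Without this propagation argument the conclusion $g\in\{f,\hat f\}$ does not follow, so the proof as written has a genuine gap.
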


\begin{proof}
If $f$ is of type (III), then from Proposition~\ref{A3}
(where functions $f$ of prime power order $p^{m}$ are considered, here
one has to take $m=1$) we conclude that
$g=f^{k}$ for some $k\in \{1,\dots,p-1\}$.
  Corresponding to the types of
$f$ and $g$, there remain altogether 6 cases to consider, denoted by
$(X,Y)$ if $f$ is of type $(X)$ and $g$ of type $(Y)$, where
$X\in\{\rmI, \rmII\}$ and $Y\in\{\rmI, \rmII, \rmIII\}$. We start
with the cases where $f$ and 
$g$ have different types. For these cases we shall indicate elements $x,y\in A$
with $\theta_{g}(x,y)\not\subseteq\theta_{f}(x,y)$, a contradiction
to~\ref{A2}(\ref{A2i}); 
e.g. by using Lemma~\ref{L:Es} or by finding $(x,z)\in
\theta_{g}(x,y)\setminus\theta_{f}(x,y)$.  

\underline{Case (\rmI, \rmII):} Take the $g$-essential triple
$(x,z,y)=(a,z,u)$. By \ref{L:Es}(a) we have that $(a,z,u)$ or
$(u,z,a)$ is $f$-essential. Thus $(a,u)\notin\theta_{f}(a,z)=[a,z]$, in
contradiction to $(a,u)\in\theta_{g}(a,z)\subseteq\theta_{f}(a,z)$. 

\underline{Cases (\rmI, \rmIII) and (\rmII, \rmIII):} Since $g$ is of
type (III), there exist 
elements $a_{0},b_{0}$ with $\theta_{g}(a_{0},b_{0})=[a_{0},b_{0}][a_{1},b_{1}]\ldots[a_{p-1},b_{p-1}]$ (see
Figure~\ref{fig:1}). Because $\theta_{g}(a_{0},b_{0})\subseteq
\theta_{f}(a_{0},b_{0})$ we get that $f$ (as function of type (I) or (II)) must
satisfy $\{fa_{0},fb_{0}\}=\{a_{1},b_{1}\}$ (moreover, $p=2$) and
$\theta_{f}(a_{1},b_{1})=[a_{1},b_{1}]$ (because $f^{2}=f$ for type
(I), and $f^{2}a_{0}=f^{2}b_{0}=z$ for $f$ of type (II)); in
particular $\theta_{f}(a_{1},b_{1})$ cannot contain
$\theta_{g}(a_{1},b_{1})=\theta_{g}(a_{0},b_{0})$, a contradiction.

\underline{Case (\rmII, \rmI):} Take the $g$-essential triple
$(x,z,y):=(a,z,b)$. By \ref{L:Es}(b) there exists an $f$-essential
triple  $(a',z',u')$ with $\{a,b,z\}=\{a',u',z'\}$. Further, there
must exist $b'\notin\{a',u',z'\}$ with $fb'=z'$.
Note that $\theta_{f}(x,b')$ consists only of 2-element
blocks for each $x\in\{a',u',z'\}=\{a,b,z\}$. Therefore,
$\theta_{g}(a,b')=[a,b',z]$ if $gb'=b'$, or
$\theta_{g}(b,b')=[b,b',gb']$ if $gb'\neq b'$, cannot be contained
in $\theta_{f}(a,b')$ or $\theta_{f}(b,b')$, respectively, a contradiction.

Now we continue with the cases where $f$ and $g$ are of the same type.

\underline{Case (\rmI, \rmI):} Let $(a,z,b)$ be an $g$-essential triple.
According to Lemma~\ref{L:Es}(a), we have only the following two
possibilities: 
\begin{align*}
  \tag{*} fa&=z \text{ and } fb=b\\
  \tag{**} \text{ or } fb&=z \text{ and } fa=a.
\end{align*}
Clearly this must hold for \textsl{each} essential triple of $g$.
We show that (*) implies $g=f$ and (**) implies $f=\hat g$.
Let $c\in A\setminus\{a,b,z\}$ arbitrary.

Consider case (*): If
$gc=c$, take the $g$-essential triple $(a,z,c)$ for  
which only case (*) is possible (since $fa=z$), i.e. $fc=c=gc$. If
$gc\neq c$, take the $g$-essential
triple $(c,gc,b)$ (provided that $gc\neq b$) or $(c,gc,z)$ (if $gc=b$)
for which again case (*) must hold (since $fb=b$ and $fz=z$),
i.e. $fc=gc$, too. Altogether $f=g$.

Now consider case (**): If $gc=c$, take the $g$-essential triple $(a,z,c)$ for
which only case (**) is possible (since $fa=a$), i.e. $fc=z$.
If
$gc=z\neq c$, take the essential
triple $(c,z,b)$ for which again case (**) must hold (since $fb=z$),
i.e. $fc=c$. 
The case $gc\notin\{c,z\}$ cannot appear because then 
we would have $[a,c][z,gc]=\theta_{g}(a,c)\subseteq
\theta_{f}(a,c)=[a,c,fc]$, a contradiction. Thus altogether we have 
 $f=\hat g$.

\underline{Case (\rmII, \rmII):} Let $(a,z,u)$ be a $g$-essential triple.
Then the triples $(x,z',y)=(a,z,u)$ and $(x,z',y)=(a,u,z)$ satisfy the
assumptions of Lemma~\ref{L:Es}(b), consequently neither $z$ nor $u$
can be in the first component of the corresponding $f$-essential
triples. Thus there remain only the following two
possibilities: 
\begin{align*}
  \tag{*} fa&=u \text{ and } fu=z=fz\\
  \tag{**} \text{ or } fa&=z \text{ and } fz=u=fu.
\end{align*}
Clearly this must hold for \textsl{each} essential triple of $g$.
We show that (*) implies $g=f$ and (**) implies $f=\hat g$
(equivalentyl, $g=\hat f$).

Consider case (*): Since $fz=z$ is the unique fixed point of $f$, for
each $g$-essential triple $(a',z,u')$ we get case (*),i.e. $(a',z,u')$ is $f$-essential and thus $f$ and $g$ agree on all
essential triples. If $b$ does not belong to an $g$-essential triple,
then $gb=z$ and $\theta_{g}(a,b)=[a,b][u,z]$. From
$\theta_{g}(a,b)\subseteq\theta_{f}(a,b)=[a,b][u,fb]$ we conclude
$fb=z$; altogether $f=g$.

Consider now case (**):  Since $fu=u$ is the unique fixed point of
$f$ but $fz=u$ is not a fixed point, for
each $g$-essential triple $(a',z,u')$ we must have case (**),
i.e. $(a',u',z)$ is $f$-essential, in particular $u'=u$ and
$fa'=z$.  
If $b$ does not belong to an $g$-essential triple,
then $gb=z$ and $\theta_{g}(a,b)=[a,b][u,z]$. From
$\theta_{g}(a,b)\subseteq\theta_{f}(a,b)=[a,b][z,fb]$ we conclude
$fb=u$; altogether $f=\hat g$.
\end{proof}

\section{\meet-irreducible $\boldsymbol{\Con(A,f)}$ in $\boldsymbol{\cE}$ with
  permutation $\boldsymbol{f}$}\label{sec:4}

The coatoms are \meet-irreducible in $\cE$. Now we want to deal with
\meet-irreducible congruence lattices in general. 
 They all must be of the form
$\Con(A,f)$ for a single $f$ and we have:

\begin{proposition}\label{P3}
Let $\Con(A,f)$ be a \meet-irreducible element in $\cE$. Then there exists $g\in
A^{A}$ such that $\Quord(A,g)$ is \meet-irreducible in $\cL$ and
$\Con(A,f)=\Con(A,g)$, $\Quord(A,f)\subseteq\Quord(A,g)$.
\end{proposition}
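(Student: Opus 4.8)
The plan is to transport the meet-irreducibility of $\Con(A,f)$ from $\cE$ back to $\cL$ along the residual mapping $\Phi\colon\cL\to\cE$, $Q\mapsto Q\cap\Eq(A)$ of~\ref{B2}, by means of Proposition~\ref{B1}\eqref{B1i} and Remark~\ref{B1a}. Recall that $\Phi$ is surjective and that $\Phi(\Quord(A,F))=\Con(A,F)$; in particular $\Quord(A,f)$ lies in the fibre $\Phi^{-1}(\Con(A,f))$.

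First I would set $m:=\Con(A,f)$, which is meet-irreducible in $\cE$ by hypothesis, and apply Remark~\ref{B1a} to the element $\Quord(A,f)\in\Phi^{-1}(m)$. Since $\cL$ is finite, this produces a \emph{maximal} element $Q$ of the fibre with $\Quord(A,f)\subseteq Q\in\Phi^{-1}(m)$, and by Proposition~\ref{B1}\eqref{B1i} every such maximal $Q$ is meet-irreducible in $\cL$. Thus already at this stage we have $\Quord(A,f)\subseteq Q$ and $\Phi(Q)=\Con(A,f)$.

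It remains to write $Q$ as $\Quord(A,g)$ for a single function $g$. By the same reasoning as in~\ref{A01} (applied to $\cL$ instead of $\cE$), a meet-irreducible element of $\cL$ must be of this form: writing $Q=\Quord(A,F)=\bigcap_{h\in F}\Quord(A,h)$ and using that a meet-irreducible element of a finite lattice which equals a finite meet must coincide with one of its terms, we obtain $Q=\Quord(A,g)$ for some $g\in F$. Then $\Quord(A,g)=Q$ is meet-irreducible in $\cL$, we have $\Con(A,g)=\Phi(\Quord(A,g))=\Phi(Q)=\Con(A,f)$, and $\Quord(A,f)\subseteq Q=\Quord(A,g)$, which are precisely the three assertions.

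I do not anticipate a real obstacle, since the statement is a direct transport of meet-irreducibility through $\Phi$. The only two points requiring care are choosing the maximal $Q$ \emph{above} $\Quord(A,f)$ (rather than an arbitrary maximal point of the fibre), so as to secure the containment $\Quord(A,f)\subseteq\Quord(A,g)$, and recalling that meet-irreducibles of $\cL$ are generated by a single function.
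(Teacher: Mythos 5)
Your proposal is correct and follows exactly the paper's own route: the paper likewise applies Remark~\ref{B1a} to $\Quord(A,f)\in\Phi^{-1}(\Con(A,f))$ to obtain a maximal, hence (by Proposition~\ref{B1}\eqref{B1i}) \meet-irreducible $Q\supseteq\Quord(A,f)$ in the fibre, and identifies $Q$ with some $\Quord(A,g)$ since \meet-irreducibles of $\cL$ are determined by a single function. Your write-up merely makes explicit the steps the paper leaves as a one-line reference.
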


\begin{proof}
  The proof directly follows from Proposition~\ref{B1}\eqref{B1i} and
  Remark~\ref{B1a} applied to
  the residual mapping $\Phi$, cf.~\ref{B2} (the role of $m,q,r$
  in~\ref{B1}\eqref{B1i} and~\ref{B1a} here is played by
  $\Con(A,f),\Quord(A,g),\Quord(A,f)$). 
\end{proof}

We shall describe first the \meet-irreducibles for
permutations $f$ and -- in the next section -- for acyclic $f$.

For permutations $f$
the \meet-irreducible quasiorder lattices $\Quord(A,f)$ are
known. They are described in \cite[Theorem~3.2]{JakPR2016}: $f$ is
either a transposition or of the form as given in Theorem~\ref{Thm3}
below. We first exclude the transpositions:

\begin{lemma}\label{P4}
  Let $f\in A^{A}$ be a transposition $(|A|\geq 3)$. Then $\Con(A,f)$
  is not \meet-irreducible. 
\end{lemma}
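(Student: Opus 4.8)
The plan is to show that a transposition's congruence lattice is a nontrivial intersection of two strictly larger congruence lattices in $\cE$, thereby violating \meet-irreducibility.

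First I would compute $\Con(A,f)$ explicitly for a transposition, say $f=(a\;b)$ (interchanging $a$ and $b$, fixing all other points). Since $f$ is an involution, an equivalence $\kappa$ is compatible with $f$ iff swapping $a$ and $b$ maps each $\kappa$-block to a $\kappa$-block. The key structural observation is that the single pair $\{a,b\}$ is all that $f$ ``moves'', so the compatible equivalences are governed only by how $\kappa$ treats $a$ and $b$ relative to the fixed points. Concretely, I expect $\Con(A,f)$ to consist of exactly those $\kappa\in\Eq(A)$ for which the blocks containing $a$ and $b$ are ``symmetric'' under the swap: either $a$ and $b$ lie in the same block, or the swap exchanges $a$'s block with $b$'s block (forcing these two blocks to have matching structure on the fixed points). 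I would nail down this description precisely, since the argument depends on it.

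Next I would exhibit two functions $g_1,g_2\in A^A$ (or directly two elements $E_1,E_2\in\cE$) with $\Con(A,f)=E_1\cap E_2$ but $\Con(A,f)\subsetneq E_i$ for $i=1,2$. The natural candidates come from the two ``reasons'' an equivalence can be $f$-compatible: one can take a constant-type or idempotent witness separating the ``$a,b$ in one block'' case from the ``$a,b$ swapped'' case. For instance, using the fact (Theorem~\ref{Thm2}) that each $E_\kappa=\{\Delta,\kappa,\nabla\}$ is an atom, I would look for a suitable larger congruence lattice above $\Con(A,f)$ generated by adjoining an equivalence that is preserved by some function properly weaker than $f$. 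Since $|A|\geq 3$, there is at least one fixed point $c\notin\{a,b\}$, which provides the room to build genuinely different extensions: one extension that identifies $c$ with the $\{a,b\}$-block behaviour and one that does not.

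The main obstacle will be verifying that the two chosen extensions $E_1,E_2$ are genuinely \emph{in} $\cE$ (i.e.\ Galois-closed of the form $\Con(A,\End E_i)$) and that their intersection recovers exactly $\Con(A,f)$ rather than something strictly smaller. The cleanest route is probably to identify concrete functions $g_1,g_2$ whose congruence lattices sit strictly above $\Con(A,f)$ and whose intersection is $\Con(A,f)$; here I would use Remark~\ref{A2a}, which reduces containment $\Con(A,f)\subseteq\Con(A,g)$ to the principal-congruence condition $(g_ix,g_iy)\in\theta_f(x,y)$, making both the containments and the strictness checkable by a finite case analysis on the few relevant pairs. The existence of the fixed point $c$ is what guarantees the two extensions are distinct and each strictly larger, so the hypothesis $|A|\geq 3$ is used exactly here.
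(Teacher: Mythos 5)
Your overall strategy coincides with the paper's: exhibit two concrete functions $g_{1},g_{2}$ with $\Con(A,f)\subsetneqq\Con(A,g_{i})$ for $i\in\{1,2\}$ and $\Con(A,f)=\Con(A,g_{1})\cap\Con(A,g_{2})$, verifying the containments through principal congruences via Remark~\ref{A2a}; and you correctly locate the role of $|A|\geq 3$ in making the two extensions strict. The gap is that the entire content of the lemma is the \emph{choice} of the two witnesses, and you never produce them; moreover, the heuristics you offer for finding them point in a direction that does not work. Writing $f$ for the transposition interchanging $0$ and $1$, one computes that $\kappa\in\Con(A,f)$ if and only if either $(0,1)\in\kappa$, or both blocks $[0]_{\kappa}$ and $[1]_{\kappa}$ are singletons. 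The correct decomposition does \emph{not} separate these two ``reasons'' (the first disjunct is common to both factors), nor does it treat the extra fixed point $c$ asymmetrically; it splits the conjunction inside the second reason. The paper's witnesses are the two idempotent collapses $g_{0}\colon 0,1\mapsto 0$ and $g_{1}\colon 0,1\mapsto 1$ (each fixing all other points): then $\Con(A,g_{0})$ consists exactly of those $\kappa$ with $(0,1)\in\kappa$ or $[1]_{\kappa}=\{1\}$, symmetrically for $g_{1}$, the intersection is precisely $\Con(A,f)$, and for any $x\notin\{0,1\}$ the equivalence $[0,x]$ lies in $\Con(A,g_{0})\setminus\Con(A,f)$ because $\theta_{g_{0}}(0,x)=[0,x]$ while $\theta_{f}(0,x)=[0,1,x]$.

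By contrast, the candidate suggested by your phrase ``one extension that identifies $c$ with the $\{a,b\}$-block behaviour'' --- for instance $h\colon 0,1\mapsto c$ with $c$ a fixed point of $f$ --- already fails the required inclusion $\Con(A,f)\subseteq\Con(A,h)$: for $|A|\geq 4$ and $d\notin\{0,1,c\}$ the equivalence $[0,1,d]$ is preserved by the transposition but not by $h$, since $h$ sends the pair $(0,d)$ to $(c,d)\notin[0,1,d]$. So the plan, as sketched, cannot be completed without replacing the hinted construction by the explicit witnesses above; once those are in hand, the remaining verifications are exactly the routine principal-congruence checks you describe.
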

\begin{proof}
  If $f\in A^{A}$ is a transposition, then there are elements $0,1\in
  A$ such that $f0=1$, $f1=0$ and $fx=x$ for $x\in A\setminus
  \{0,1\}$. Let $g_{0}, g_{1}\in A^{A}$ be the nontrivial functions defined by
  $g_{0}0=g_{0}1=0$, $g_{1}0=g_{1}1=1$ and all $x\in A\setminus
  \{0,1\}$ are fixed point for $g_{0}$ and $g_{1}$. 
Then, for the principal congruences, we have
$\theta_{f}(x,y)=\theta_{g_{0}}(x,y)=\theta_{g_{1}}(x,y)=[x,y]$ for
all $x,y\in A$ with the only exceptions
$\theta_{f}(0,x)=\theta_{f}(1,x)=\theta_{g_{0}}(1,x)=\theta_{g_{1}}(0,x)=[0,1,x]$
for all $x\in A\setminus\{0,1\}$. 
Note $\theta_{f}(0,x)\neq \theta_{g_0}(0,x)$ and $\theta_{f}(1,x)\neq \theta_{g_1}(1,x)$.
Therefore $\Con(A,f)\subsetneqq \Con(A,g_{i})$, $i\in\{0,1\}$ and
$\Con(A,f)=\Con(A,g_{0})\cap\Con(A,g_{1})$. 
\end{proof}

The following proposition deals with those functions which will play the
crucial role in the next theorem.

\begin{proposition}\label{P1a}
Let $|A|\geq 3$ and let $f\in\Sym(A)$
    be a permutation of prime power order $p^{m}$ with at least two
    cycles of length $p^{m}$. Then the principal filter
    \begin{align*}
      [\Con(A,f)\rangle_{\cE}:=\{E\in\cE\mid \Con(A,f)\subseteq E\}
    \end{align*}
is a chain. Moreover, each element of this chain is
\meet-irreducible (except the top element $\Eq(A)$) and is of the form $\Con(A,g)$, where $g=f^{k}$ for some
$k\in\N_{+}$. 
  
\end{proposition}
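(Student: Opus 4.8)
The plan is to establish the filter structure by combining the two results already in hand: Proposition~\ref{A3}, which tells us that every $g\in A^{A}$ with $\Con(A,f)\subseteq\Con(A,g)$ is a power $g=f^{k}$ (for $k\in\{1,\dots,p^{m}-1\}$), and the structure of the quasiorder filter from \cite{JakPR2016}. First I would observe that every element $E$ of the filter $[\Con(A,f)\rangle_{\cE}$ is \meet-irreducible or equals $\Eq(A)$: indeed, any $E\in\cE$ is the intersection of the \meet-irreducible lattices $\Con(A,h)$ with $h\in\End E$, and each such $h$ satisfies $\Con(A,f)\subseteq\Con(A,h)$, hence by Proposition~\ref{A3} is of the form $h=f^{k}$. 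Thus every element of the filter is an intersection of lattices $\Con(A,f^{k})$.

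The key step is then to understand the containment order among the lattices $\Con(A,f^{k})$. My approach is to transfer the problem to quasiorders via Lemma~\ref{A3a}: since $f$ has prime power order with at least two full-length cycles, we have $\End\Con(A,f)=\End\Quord(A,f)$, and more generally I expect each relevant power $f^{k}$ to inherit enough of this structure so that $\Con(A,f^{k})\subseteq\Con(A,f^{l})$ is governed by the analogous quasiorder containment $\Quord(A,f^{k})\subseteq\Quord(A,f^{l})$. From \cite[Proposition~2.5(b)]{JakPR2016} the latter holds precisely when $f^{l}$ is a power of $f^{k}$, i.e.\ when the cyclic subgroup generated by $f^{l}$ is contained in that generated by $f^{k}$. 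Because $f$ has order $p^{m}$, the subgroups of $\langle f\rangle$ form a chain (the subgroup lattice of a cyclic $p$-group is a chain of length $m$), and this is exactly what forces $[\Con(A,f)\rangle_{\cE}$ to be a chain rather than merely a poset.

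Concretely, I would show that the distinct lattices among $\Con(A,f^{k})$ are precisely $\Con(A,f^{p^{j}})$ for $j=0,1,\dots,m-1$, since $\Con(A,f^{k})=\Con(A,f^{p^{j}})$ whenever $\gcd(k,p^{m})=p^{j}$, and that these are linearly ordered:
\[
\Con(A,f)=\Con(A,f^{p^{0}})\subseteq\Con(A,f^{p})\subseteq\dots\subseteq\Con(A,f^{p^{m-1}})\subseteq\Eq(A).
\]
Each $f^{p^{j}}$ is again a permutation whose order is the prime power $p^{m-j}$ with at least two cycles of full length $p^{m-j}$, so it is nontrivial and of the form treated by Proposition~\ref{A3}; hence each $\Con(A,f^{p^{j}})$ is \meet-irreducible (the coatoms and \meet-irreducibles being exactly the $\Con(A,h)$ for single $h$, cf.~\ref{A01}). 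Since every filter element is an intersection of members of this chain, it coincides with one of them, proving that the filter is itself this chain and that every element below the top is \meet-irreducible of the form $\Con(A,f^{k})$.

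The main obstacle I anticipate is the reduction in the previous paragraph: verifying that $\Con(A,f^{k})=\Con(A,f^{l})$ exactly when $\langle f^{k}\rangle=\langle f^{l}\rangle$, and that strict containment of these cyclic subgroups yields strict containment of the congruence lattices. The ``$\supseteq$'' direction (larger subgroup gives more endomorphisms, hence fewer congruences) is routine, but the ``$\subseteq$'' direction --- that a genuinely larger congruence lattice forces a larger generated subgroup --- is where I must again invoke Lemma~\ref{A3a} applied to the powers $f^{p^{j}}$, checking that the hypothesis ``at least two cycles of full length'' survives under taking these powers. Once this equivalence between the congruence order and the subgroup order is secured, the chain structure follows immediately from the fact that $\langle f\rangle$ is a cyclic $p$-group.
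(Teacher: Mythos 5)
Your proposal is correct, but it takes a genuinely different route from the paper's. The paper imports the chain property from the quasiorder side: by Theorems~4.2 and 4.3 of \cite{JakPR2016} the filter $[\Quord(A,f)\rangle_{\cL}$ is a chain, and Lemma~\ref{A3a} together with Corollary~\ref{B6} shows that $\Phi$ maps this filter onto $[\Con(A,f)\rangle_{\cE}$, which is therefore a chain as the monotone image of a chain; meet-irreducibility and the form $\Con(A,f^{k})$ then follow. You instead derive the chain structure internally: every $E$ in the filter equals $\Con(A,\End E)=\bigcap_{h\in\End E}\Con(A,h)$, each nontrivial such $h$ is a power $f^{k}$ by Proposition~\ref{A3} (the trivial ones contribute only $\Eq(A)$ to the intersection, which you should say explicitly), and the lattices $\Con(A,f^{k})$ are linearly ordered because $\Con(A,g)$ depends antitonically on the cyclic subgroup $\langle g\rangle$ and the subgroups of a cyclic $p$-group form a chain. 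This replaces the appeal to Theorems~4.2/4.3 of \cite{JakPR2016} and to Corollary~\ref{B6} by elementary group theory, a real gain in self-containedness (Proposition~\ref{A3} is needed by both routes); the paper's version, on the other hand, fits the general residual-mapping machinery of Section~\ref{sec:2} and reuses an already established structural result about $\cL$. Two points to tidy up: the parenthetical appeal to \ref{A01} does not give meet-irreducibility of $\Con(A,f^{p^{j}})$ --- \ref{A01} only says meet-irreducibles \emph{must} have the form $\Con(A,h)$ for a single $h$, not conversely (transpositions, cf.~Lemma~\ref{P4}, show the converse fails) --- but this is harmless, since once the filter is known to be a chain every non-top element is automatically meet-irreducible; and the strictness of the inclusions $\Con(A,f^{p^{j}})\subsetneqq\Con(A,f^{p^{j+1}})$ that you worry about at the end is not needed for any part of the statement, so the ``main obstacle'' you anticipate can simply be dropped.
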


\begin{proof}
  Given $f$ as indicated we know from
  \cite[Theorem~4.2 and 4.3]{JakPR2016} that the principal filter
  $[\Quord(A,f)\rangle_{\cL}$ is a chain. From Lemma~\ref{A3a} and
  Corollary~\ref{B6} 
  we conclude
  $\Phi([\Quord(A,f)\rangle_{\cL})=[\Con(A,f)\rangle_{\cE}$,
  consequently $[\Con(A,f)\rangle_{\cE}$ is also a chain, therefore
  each element (except $\Eq(A)$) of this chain is \meet-irreducible
  and thus of the form 
  $\Con(A,g)$. By~\ref{A3} each 
    nontrivial $g$ with $\Con(A,f)\subseteq\Con(A,g)$ is of the form
    $g=f^{k}$.
\end{proof}



\begin{theorem}\label{Thm3}

A congruence lattice $\Con(A,f)$ with a nontrivial permutation $f$ is
\meet-irreducible in $\cE$ if and only if $f$ is
of prime power order $p^{m}$ with at least two cycles
of length $p^{m}$.
\end{theorem}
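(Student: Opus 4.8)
The plan is to prove both directions of the equivalence in Theorem~\ref{Thm3}, exploiting the groundwork already laid. The ``if'' direction is essentially immediate: if $f$ is a nontrivial permutation of prime power order $p^m$ with at least two cycles of length $p^m$, then Proposition~\ref{P1a} applies directly and tells us that the principal filter $[\Con(A,f)\rangle_{\cE}$ is a chain in which every element except the top $\Eq(A)$ is \meet-irreducible. Since $\Con(A,f)$ is itself the bottom element of this filter and $\Con(A,f)\neq\Eq(A)$ (as $f$ is nontrivial), it is one of these \meet-irreducible elements, so we are done.

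For the ``only if'' direction I would argue by contraposition: assume $f$ is a nontrivial permutation that is \emph{not} of prime power order with at least two cycles of maximal length, and show $\Con(A,f)$ is not \meet-irreducible. First I would invoke Proposition~\ref{P3}: since \meet-irreducibility of $\Con(A,f)$ would force, via the residual mapping $\Phi$, the existence of a $g$ with $\Quord(A,g)$ \meet-irreducible in $\cL$ and $\Con(A,f)=\Con(A,g)$. So it suffices to understand which permutations give \meet-irreducible quasiorder lattices. Here I would appeal to the cited classification \cite[Theorem~3.2]{JakPR2016}, which says the \meet-irreducible $\Quord(A,f)$ for permutations $f$ arise exactly when $f$ is a transposition or a permutation of prime power order $p^m$ with at least two cycles of length $p^m$. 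The transposition case is already handled by Lemma~\ref{P4}, which shows $\Con(A,f)$ is reducible even though $\Quord(A,f)$ is irreducible. Thus if $\Con(A,f)$ were \meet-irreducible, the permutation $g$ produced by Proposition~\ref{P3} (with $\Con(A,g)=\Con(A,f)$) would have to be of the prime-power-order type, and I would then need to transfer this structural property back from $g$ to $f$.

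The main obstacle I anticipate lies precisely in that transfer step. From Proposition~\ref{P3} we obtain $g$ with $\Con(A,f)=\Con(A,g)$ and $\Quord(A,f)\subseteq\Quord(A,g)$, where $g$ is of the desired prime-power form; but I must conclude that $f$ \emph{itself} has this form. The inclusion $\Quord(A,f)\subseteq\Quord(A,g)$ means $g\in\End\Quord(A,f)$, and by the results on quasiorder lattices of permutations in \cite{JakPR2016} (specifically Proposition~2.5(b), used already in Proposition~\ref{A3}) this should force $g$ to be a power of $f$, say $g=f^k$. Since a permutation and its power share the same cycle structure up to the arithmetic of $k$, and since being of prime power order with at least two maximal cycles is an invariant that propagates between $f$ and its powers (as already exploited implicitly in Proposition~\ref{A3} with $m=1$ and in Proposition~\ref{P1a}), one deduces that $f$ too is of prime power order $p^m$ with at least two cycles of length $p^m$. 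The careful bookkeeping here—ruling out the transposition case cleanly and ensuring the cycle-structure invariant really does transfer through $g=f^k$—is where the delicate work sits, but each ingredient is available from the quoted lemmas.
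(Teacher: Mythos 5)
Your ``if'' direction is fine and matches the paper (it is exactly Proposition~\ref{P1a}). In the ``only if'' direction, however, there are two genuine gaps. First, Proposition~\ref{P3} only provides \emph{some} function $g\in A^{A}$ with $\Quord(A,g)$ \meet-irreducible in $\cL$, $\Con(A,f)=\Con(A,g)$ and $\Quord(A,f)\subseteq\Quord(A,g)$; it does not tell you that $g$ is a permutation. The classification from \cite[Theorem~3.2]{JakPR2016} that you invoke is the classification of \meet-irreducible quasiorder lattices \emph{for permutations}, so you cannot apply it until you know $g$ is one ($\cL$ also has \meet-irreducible elements determined by non-permutations, e.g.\ the coatoms of types (I) and (II)). The paper spends the first half of its proof on precisely this point: from $\Quord(A,f)\subseteq\Quord(A,g)$ every subalgebra of $(A,f)$ is one of $(A,g)$, from $\Con(A,f)=\Con(A,g)$ and Lemma~\ref{A2}\eqref{A2ii} every at least two-element subalgebra of $(A,g)$ is one of $(A,f)$, and a short case analysis with these transfers shows $g$ must be injective.

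Second, your transfer step runs in the wrong direction and is circular as stated. You propose to deduce $g=f^{k}$ from $g\in\End\Quord(A,f)$ via Proposition~2.5(b) of \cite{JakPR2016}, but that statement (as used in Proposition~\ref{A3}) has as hypothesis that the \emph{base} permutation is of prime power order with at least two maximal cycles --- which for $f$ is exactly what you are trying to prove. Moreover, even granting $g=f^{k}$, the cycle structure of a power of $f$ does not determine that of $f$: if $f$ is a product of a $p$-cycle and a $q$-cycle for distinct primes, then $f^{q}$ already has $p$-power order with the right shape while $f$ does not. The paper argues the other way around: having shown $g$ is a permutation, it concludes from \meet-irreducibility of $\Quord(A,g)$ that $g$ is either a transposition (excluded by Lemma~\ref{P4}, since $\Con(A,g)=\Con(A,f)$ is \meet-irreducible) or of prime power order $p^{m}$ with at least two cycles of length $p^{m}$; it then applies Proposition~\ref{A3} \emph{to $g$} and the inclusion $\Con(A,g)\subseteq\Con(A,f)$ to obtain $f=g^{k}$, and finally rules out $p\mid k$ (since $\Con(A,g^{p})\supsetneqq\Con(A,g)$), so that $f$ and $g$ generate the same cyclic group and $f$ inherits the required order and cycle structure. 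Without these two repairs your argument does not close.
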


\begin{proof} ``$\Leftarrow$'' was proved in ~\ref{P1a}.
 
``$\Rightarrow$'': Let $f$ be a permutation such that $\Con(A,f)$ is
\meet-irreducible. By~\ref{P3} there exists $g\in A^{A}$ such that 
$\Quord(A,g)$ is \meet-irreducible in $\cL$ and
\begin{align*}
  \text{(*) } \Quord(A,f)\subseteq\Quord(A,g), 
        && \text{(**) } \Con(A,f)=\Con(A,g).
\end{align*}

As shown in  \cite[Lemma~2.4(iv)]{JakPR2016}), from (*) follows that
each subalgebra of $(A,f)$ is also a subalgebra of $(A,g)$, while from
(**) and Lemma~\ref{A2}\eqref{A2ii} (interchange the roles of
$f$ and $g$) follows that each subalgebra of $(A,g)$
 with at least two elements is also a subalgebra of $(A,f)$ (since $f$ is a
permutation, it cannot be constant on two elements).

At first we show that $g$ is also a permutation. Let $x,y\in
A$ such that $x\neq y$. Since cycles (what coincides with components)
of $f$ are subalgebras of $(A,f)$ 
and thus also of $(A,g)$,
$K_{f}(x)\neq K_{f}(y)$ implies $gx\in K_{f}(x)$, $gy\in K_{f}(y)$,
hence $gx\neq gy$. 
 Thus let $x,y$ belong to the same cycle of $f$ and assume
$gx=gy$. Then from (**) and Lemma~\ref{A2}\eqref{A2i} (here the roles
of $f$ and $g$ are interchanged) we conclude 
$(fx,fy)\in \theta_{g}(x,y)=[x,y]$, thus $\{fx,fy\}=\{x,y\}$ is a
subalgebra of $(A,f)$ and therefore $\{x,y\}\leq (A,g)$ (as
mentioned above). Thus
w.l.o.g.\ we can assume $gx=gy=x$. Let $z\in A\setminus\{x,y\}$ and
let $C$ be the cycle of $f$ which contains $z$. Then $\{x\}\cup C$ is
a  subalgebra(with at least $2$-elements) of $(A,g)$ but not of
$(A,f)$ (since $fx=y\notin\{x\}\cup C$), a contradiction.

Thus $g$ is a permutation. Therefore,
from~\cite[Proposition~2.5(b)]{JakPR2016} and \meet-irreducibility of
$\Quord(A,g)$ we get that $g$ is a permutation of prime power order
$p^{m}$ with at least two cycles of length $p^{m}$ or that $g$ is a
transposition. Since $\Con(A,g)=\Con(A,f)$ is \meet-irreducible, $g$ cannot be a
transposition by Lemma~\ref{P4}. From (**)
and Proposition~\ref{A3} (interchange the role of $f$ and $g$) we get
$f=g^{k}$ for some $k\in\N_{+}$. From (**) we further
conclude that $p$ cannot divide $k$ (since
$\Con(A,g^{p})\supsetneqq\Con(A,g)$). Therefore $f$ and $g$
generate the same cyclic subgroup, in particular $f$ also has order
$p^{m}$ and at least two cycles of length $p^{m}$, and we are done.
\end{proof}

\section{\meet-irreducible $\boldsymbol{\Con(A,f)}$ in
  $\boldsymbol{\cE}$ with acyclic $\boldsymbol{f}$}\label{sec:4a}

In this section we deal with acyclic algebras $(A,f)$. For an acyclic
$f\in A^{A}$ and $x\in A$ let $t_{f}(x):=\min\{n\in\N\mid
f^{n}x=f^{n+1}x\}$ denote the so-called \New{depth} of $x$ (after $n$
times applying $f$ to $x$ one gets a fixed point) and let
\[\bar t(F):=\max\{t_{f}(x)\mid x\in A\}\] (this is the length of a
longest ``tail'' in the graph of $f$). 

\begin{subpart}\label{D0}
For a nontrivial, acyclic function $f\in A^{A}$ we consider the following
conditions (cf.\ Figure~\ref{fig:3}):
\begin{itemize}
\item[(a)] There exist distinct elements $0,1,2,0',1',2'\in A$ such
  that $f2=1,\, f1=f0=0$ and $f2'=1',\, f1'=f0'=0'$,
\item[(b)] $f$ is connected (i.e., only one component) and there
  exist distinct elements $0,1,2,1',2'\in A$ such that $f2=1,\,f2'=1',\,
  f1'=f1=f0=0$.
\end{itemize}

\begin{figure}[htb]
  \centering
  \scalebox{0.8}{\input{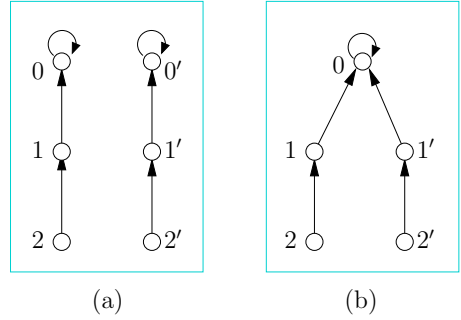}}
  \caption{The action of $f$ on the elements in
    conditions~\ref{D0}(a),(b)}
  \label{fig:3}
\end{figure}

\end{subpart}

\begin{proposition}\label{D1a}
Let $f\in A^{A}$ be nontrivial and acyclic such that $f$ is not of
type {\rm (I)}, not of type {\rm (II)} and satisfies
 neither condition {\rm\ref{D0}(a)} nor {\rm(b)}. Then $\Con(A,f)$ is
  \meet-reducible.
\end{proposition}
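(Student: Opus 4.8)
The plan is to show that $\Con(A,f)$ is \emph{not} \meet-irreducible by writing it as $E_{1}\cap E_{2}$ with $E_{1},E_{2}\in\cE$ both strictly larger. First I extract a structural picture from the hypotheses. Since $f$ is nontrivial, acyclic and not of type~(I), we have $\bar t(f)\geq 2$, so there is a vertex $a$ of depth $t_{f}(a)=2$; putting $b:=fa$ and $c:=f^{2}a$ we get three distinct elements with $fb=c$, $fc=c$, and $\{a,b,c\}$ inside a single block of $\theta_{f}(a,b)$. The failure of condition~\ref{D0}(a) forces $f$ to have only one component of depth $\geq 2$, while the failure of type~(II) and of condition~\ref{D0}(b) restrict how that component may branch at its fixed point. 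The decisive tool is that $\cE$ is atomistic (Theorem~\ref{Thm2}): an element of $\cE$ is determined by the atoms below it (notation $\At$ as in Remark~\ref{A4}), and for $E,E'\in\cE$ one has $\At(E\cap E')=\At(E)\cap\At(E')$. Hence it is enough to find $\kappa_{1},\kappa_{2}\in\Eq(A)\setminus\Con(A,f)$ such that the joins $E_{i}:=\Con(A,f)\join E_{\kappa_{i}}$ add \emph{disjoint} families of new atoms, i.e.\ $\At(E_{1})\cap\At(E_{2})=\At(\Con(A,f))$.

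I would take $\kappa_{1}:=[a,b]$ and $\kappa_{2}:=[a,c]$, merging the deep vertex $a$ with its parent, respectively with its (fixed) grandparent. Both are proper extensions: applying $f$ to the generating pair of either $\kappa_{i}$ yields $(b,c)$, and since $c\notin\{a,b\}$ and $b\notin\{a,c\}$ this pair lies in neither $[a,b]$ nor $[a,c]$; thus $f\notpreserves\kappa_{i}$, so $\kappa_{i}\notin\Con(A,f)$ and $E_{i}=\Con\End(\Con(A,f)\cup\{\kappa_{i}\})\supsetneq\Con(A,f)$. Intuitively the two extensions push in genuinely different directions—the $\kappa_{1}$-side only creates congruences keeping $a$ tied to $b$, the $\kappa_{2}$-side only ones keeping $a$ tied to $c$—so the new atoms on the two sides should never coincide.

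The heart of the argument is precisely the verification $\At(E_{1})\cap\At(E_{2})=\At(\Con(A,f))$, i.e.\ that no $\sigma\in\Eq(A)\setminus\Con(A,f)$ lies in both $E_{1}$ and $E_{2}$. By the Galois correspondence of Remark~\ref{A01} (together with Remark~\ref{A2a}), the non-membership $\sigma\notin E_{i}$ is witnessed by a single map $h\in\End\Con(A,f)$ with $h\preserves\kappa_{i}$ but $h\notpreserves\sigma$, and I would produce such separating maps explicitly—typically idempotents that collapse the relevant $\Con(A,f)$-block while holding $\{a,b\}$ (resp.\ $\{a,c\}$) together. This is exactly where the hypotheses are consumed: the excluded patterns type~(I), type~(II), \ref{D0}(a) and \ref{D0}(b) are precisely the rigid ``mirror'' configurations (a second deep component, or a second deep branch at the fixed point) that would force the $\kappa_{1}$- and $\kappa_{2}$-extensions to share a new atom and thereby make $\Con(A,f)$ \meet-irreducible. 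I expect the main obstacle to be carrying out these separating-endomorphism constructions uniformly over the admissible shapes of the deep component (a single long tail, a tail carrying several leaves at depth $2$, possibly accompanied by shallow components or isolated fixed points), since each $h$ must be tailored to the configuration while still respecting \emph{all} of $\Con(A,f)$.
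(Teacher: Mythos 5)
Your proposal is a plan, not a proof: the entire content of the proposition is concentrated in the one step you explicitly defer, namely the verification that $\At(E_{1})\cap\At(E_{2})=\At(\Con(A,f))$, i.e.\ that no $\sigma\in\Eq(A)\setminus\Con(A,f)$ lies in both $E_{1}=\Con(A,f)\join E_{\kappa_{1}}$ and $E_{2}=\Con(A,f)\join E_{\kappa_{2}}$. You write that you ``would produce such separating maps explicitly'' and that you ``expect the main obstacle to be carrying out these separating-endomorphism constructions uniformly over the admissible shapes'' --- but that obstacle \emph{is} the proposition. Controlling the Galois closure $\Con\End(\Con(A,f)\cup\{\kappa_{i}\})$ from below is exactly the hard direction of the $\End$--$\Con$ Galois connection (by Remark~\ref{A4} it is governed by Werner's graphical compositions), and to exhibit a separating $h\in\End\Con(A,f)$ with $h\preserves\kappa_{i}$, $h\notpreserves\sigma$, for \emph{every} such $\sigma$ and every admissible shape of $f$ (several shallow components, one deep component with many leaves, long tails, isolated fixed points), one essentially needs a description of $\End\Con(A,f)$ --- which is itself a multi-claim argument of the kind carried out in Proposition~\ref{D1b} for the complementary case. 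The setup before that point is sound (atomisticity of $\cE$ does reduce $E_{1}\cap E_{2}=\Con(A,f)$ to the statement about atoms, and $\kappa_{i}\notin\Con(A,f)$ is correctly checked), but without the separating maps nothing is proved.

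For comparison, the paper goes in the opposite direction: it splits into cases according to the shape forced by the hypotheses (at least two components; one component with $\bar t(f)=2$; one component with $\bar t(f)\geq 3$) and in each case writes down two explicit unary functions $g_{1},g_{2}$ with $\Con(A,f)=\Con(A,g_{1})\cap\Con(A,g_{2})$ and both containments strict. This is the natural route because \meet-irreducibles of $\cE$ are of the form $\Con(A,g)$ for a single $g$ (Remark~\ref{A01}), and because both the inclusions $\Con(A,f)\subseteq\Con(A,g_{i})$ and the equality of the meet reduce to finite checks on principal congruences: $(g_{i}x,g_{i}y)\in\theta_{f}(x,y)$ (Remark~\ref{A2a}) and $\theta_{f}(x,y)=\theta_{g_{1}}(x,y)\lor\theta_{g_{2}}(x,y)$ for all $x,y$. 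If you want to salvage your approach, you should either carry out the separating-endomorphism constructions in full, or switch to decomposing from above by functions as the paper does.
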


\begin{proof}
If  $\bar t(f)=1$ then $f$ is of type (I). 
Thus we can assume $\bar t(f)\geq 2$. We distinguish the following cases:

Case 1: $f$ has at least two components.\\
Then (b) trivially does not hold.
If (a) fails to hold, then $f$ has exactly one component, say $K$, with
elements of depth $2$ while all other components have elements of
depth at most $1$. In particular there are at least two fixed points,
say $0\in K$ and $0'$.
Therefore $f$ is of the form as given in Figure~\ref{fig:5a} (the
shadowed part is $K$).

\begin{figure}[htb]
  \centering
\scalebox{0.69}{\input{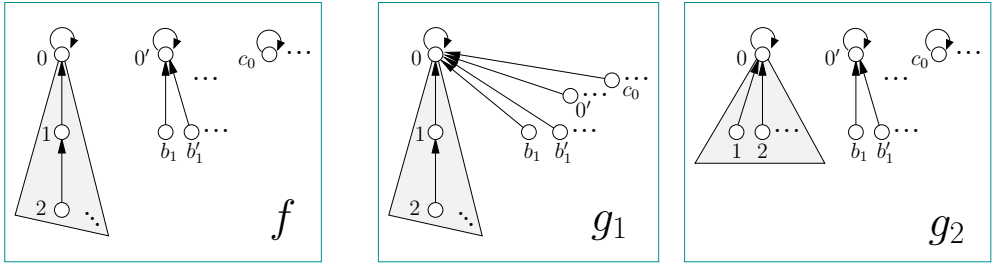}}
  \caption{Functions $f,g_{1},g_{2}$ with
    $\Con(A,f)=\Con(A,g_{1})\cap\Con(A,g_{2})$}
  \label{fig:5a}
\end{figure}

We define the functions $g_{1}$ and $g_{2}$ as follows (see Figure~\ref{fig:5a}):
\begin{align*}
  g_{1}x:=
  \begin{cases}
    fx &\text{ if } x\in K,\\
    0  &\text{ otherwise.}
  \end{cases}
&&
 g_{2}x:=
  \begin{cases}
    0 &\text{ if } x\in K,\\
    fx &\text{ otherwise.}
  \end{cases}
\end{align*}

Case 2: $f$ has only one component with fixed point, say $0$.\\
Then (a) trivially does not hold. 
If (b) fails to hold then all elements $x$ with $t_{f}(x)=2$ map to
the same element, say $1$, of depth $1$.

Case 2a: If $\bar t(f)=2$, then there is only one element of depth
$1$, because otherwise $f$ would be of type (II) what is excluded by
assumption. Therefore $f$ is of the form as given in
Figure~\ref{fig:5b}.

\begin{figure}[htb]
  \centering
\scalebox{0.7}{\input{JPR2015fig5b.pspdftex}}
  \caption{Functions $f,g_{1},g_{2}$ with
    $\Con(A,f)=\Con(A,g_{1})\cap\Con(A,g_{2})$}
  \label{fig:5b}
\end{figure}

We define the functions $g_{1}$ and $g_{2}$ as follows (see Figure~\ref{fig:5b}):
\begin{align*}
  g_{1}x&:=
  \begin{cases}
    0 &\text{ if } x=0,\\
    1 &\text{ otherwise.}
  \end{cases}
&
 g_{2}x&:=
  \begin{cases}    
    1 &\text{ if } x=1,\\
    0 &\text{ otherwise.}
  \end{cases}
\end{align*}

Case 2b: If $\bar t(f)\geq 3$ then $f$ must be of the form as given in
Figure~\ref{fig:5c}.

\begin{figure}[htb]
  \centering
\scalebox{0.7}{\input{JPR2015fig5c.pspdftex}}
  \caption{Functions $f,g_{1},g_{2}$ with
    $\Con(A,f)=\Con(A,g_{1})\cap\Con(A,g_{2})$}
  \label{fig:5c}
\end{figure}

We define the functions $g_{1}$ and $g_{2}$ as follows (see Figure~\ref{fig:5c}):
\begin{align*}
  g_{1}x&:=
  \begin{cases}
     1 &\text{ if } t_{f}(x)\geq 2,\\
     0 &\text{ otherwise.}
  \end{cases}
&
 g_{2}x&:=
  \begin{cases} 
    0 &\text{ if } t_{f}(x)=2,\\
   fx &\text{ otherwise.}
  \end{cases}
\end{align*}

 In all cases the functions $g_{1},g_{2}$ are nontrivial 
and it is easy to check that these functions satisfy
$\Con(A,f)\subsetneqq \Con(A,g_{i})$ ($i\in\{1,2\}$)
and $\Con(A,f)=\Con(A,g_{1})\cap \Con(A,g_{2})$. In
fact,  using the 
Figures~\ref{fig:5a}, \ref{fig:5b} and \ref{fig:5c}, we can check
$(g_{i}x,g_{i}y)\in \theta_{f}(x,y)$, therefore 
$\Con(A,f)\subseteq \Con(A,g_{i})$
 by Remark~\ref{A2a}; moreover the inclusions are
 strict (e.g.\ for Case 1 
we have $[1,b_{1}]=\theta_{g_{1}}(1,b_{1})\neq
\theta_{f}(1,b_{1})=[1,b_{1}][0,0']$). Further, 
$\theta_{f}(x,y)=\theta_{g_{1}}(x,y)\lor \theta_{g_{2}}(x,y)$ for all
$x,y\in A$ (e.g.\ for Case 1 we have
$\theta_{f}(2,b_{1})=[2,b_{1}][1,0,0']=[2,b_{1}][1,0] \lor
[2,b_{1}][0,0']= \theta_{g_{1}}(2,b_{1})\lor \theta_{g_{2}}(2,b_{1})$,
cf.~Figure~\ref{fig:5a}). Consequently, $\Con(A,f)=\Con(A,g_{1})\cap
\Con(A,g_{2})$. Thus $(A,f)$ is \meet-reducible. 
\end{proof}

\begin{proposition}\label{D1b}
  Let $f\in A^{A}$ be acyclic such that $f$ is not of type {\rm(I)},
  not of type {\rm
  (II)} and does satisfy either condition {\rm\ref{D0}(a)} or {\rm(b)}. 
  Let $\Con(A,f)\subsetneqq\Con(A,g)$ for $g\in A^{A}$. Then we have 
  $\rho_{1}\in \Con(A,g)$ for the equivalence relation $\rho_{1}:=[0,2]$.
\end{proposition}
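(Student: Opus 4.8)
The plan is to reduce the statement to a single pair-containment and then to extract a genuine contradiction from the \emph{strictness} of the inclusion. Since $\rho_1=[0,2]$ has $(0,2)$ as its only non-diagonal pair, Remark~\ref{A2a} shows that $\rho_1\in\Con(A,g)$ is equivalent to $(g0,g2)\in\rho_1$, i.e.\ to $g0=g2$ or $\{g0,g2\}=\{0,2\}$. From $\theta_f(0,2)=[0,1,2]$ and $\Con(A,f)\subseteq\Con(A,g)$ (Lemma~\ref{A2}\eqref{A2i}) we already know $(g0,g2)\in[0,1,2]$, so $\rho_1$ can fail only if exactly one of $g0,g2$ equals $1$; after the next step this narrows to the single ``bad'' value $g2=1$.

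First I would pin down $g$ on the distinguished elements. If $g$ collapses $\{0,1,2\}$ to a point then $g0=g2$ and we are done, so assume not; since $\{0,1,2\}$ is a subalgebra of $(A,f)$, Lemma~\ref{A2}\eqref{A2ii} forces $g$ to map $\{0,1,2\}$ into itself, and likewise $\{0',1',2'\}$ into itself in case~\ref{D0}(a). As the two triples are disjoint and $\theta_f(0,0')=[0,0']$, the containment $(g0,g0')\in[0,0']$ yields $g0=0$ and $g0'=0'$; and since $\theta_f(2,2')=[2,2']\,[1,1']\,[0,0']$, the pair $(g2,g2')$ must be one of $(0,0'),(1,1'),(2,2')$, so $g2\in\{0,1,2\}$ with $g2=i\Leftrightarrow g2'=i'$. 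If $g2\in\{0,2\}$ then $(g0,g2)=(0,g2)\in[0,2]$ and we are finished; hence everything reduces to excluding $g2=1$. In case~\ref{D0}(b) the same bookkeeping is done with the single fixed point $0$ and the two depth-one elements $1,1'$, using $\theta_f(1,1')=[1,1']$ as the relation that distinguishes $1$ from $1'$.

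The crux, and the main obstacle, is to rule out $g2=1$. My plan is to show that $g2=1$ already forces $g=f$ on all of $A$, whence $\theta_g(x,y)=\theta_f(x,y)$ for every pair and $\Con(A,g)=\Con(A,f)$, contradicting the strict inclusion $\Con(A,f)\subsetneqq\Con(A,g)$. I would prove $gx=fx$ by induction on the depth $t_f(x)$. For the base case every fixed point $c$ satisfies $gc=c$, because $\theta_f(c,0)=[c,0]$ and $\theta_f(c,0')=[c,0']$ give $gc\in\{c,0\}\cap\{c,0'\}=\{c\}$; here the two \emph{distinct} fixed points of condition~\ref{D0}(a) are essential (in case~\ref{D0}(b) the anchoring role is played by $1,1'$ together with $\theta_f(1,1')=[1,1']$). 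For the inductive step, $\theta_f(x,fx)=[Z_f(x)]$ together with $g(fx)=f(fx)$ (the hypothesis, since $t_f(fx)<t_f(x)$) gives $gx\in Z_f(x)$; then $\theta_f(x,2)$ splits into blocks $[x,2]\,[fx,1]\,[f^2x,\dots]$, and $(gx,g2)=(gx,1)\in\theta_f(x,2)$ places $gx$ in the block of $1$, forcing $gx=fx$. When the orbit $Z_f(x)$ happens to meet $2$ (so this probe degenerates), I would instead probe with the parallel branch, using $\theta_f(x,2')$ and $g2'=1'$ to obtain the same conclusion.

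This rigidity-propagation is the delicate part: it is exactly where both branches of condition~\ref{D0}(a) (or the pair $1,1'$ in~\ref{D0}(b)) are needed, and it is what converts the bare inequality $g2=1$ into the forbidden equality $\Con(A,g)=\Con(A,f)$. Once it is established the proposition follows at once.
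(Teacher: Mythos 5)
Your proposal is correct and follows essentially the same route as the paper's proof: both reduce the claim to showing that the only way $\rho_1$ can fail is $g2=1$ (the paper phrases this as assuming $\rho_1\notin\Con(A,g)$), then pin $g$ down on $\{0,1,2,0',1',2'\}$, and finally propagate $gx=fx$ along the orbits $Z_f(x)$ using principal-congruence probes against $2$ (with $2'$/$1'$ as the fallback branch when $1\in Z_f(x)$ — note the correct degeneracy condition is the orbit meeting $1$, not just $2$, which is how the paper states its w.l.o.g.), contradicting the strictness of the inclusion. The remaining loose ends in your sketch (e.g.\ $g$ possibly constant on $\{0',1',2'\}$ with value outside that set, and the case~\ref{D0}(b) bookkeeping) all resolve by the same kind of block computation and do not affect the argument.
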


\begin{proof} Assume $\rho_{1}\notin \Con(A,g)$ and we shall show that
  this leads to a contradiction. If $f$ satisfies (b), it is
  convenient to put formally $0':=0$.

We have $\bar t(f)\geq 2$, because $\bar t(f)=1$ means that $f$ is of
type (I).

\underline{Claim 1}: \emph{$\{0,1,2\}\leq (A,g)$}.\\
In fact, if $\{0,1,2\}$ were not a subalgebra, then
(by~\ref{A2}(\ref{A2ii})) $g$ would be constant on $\{0,1,2\}$. Thus
$\rho_{1}=[0,2]=\theta_{g}(0,2)\in \Con(A,g)$, a
contradiction.

\underline{Claim 2}: \emph{$\{0,1\}\leq (A,g)$}.\\
Assume that $\{0,1\}$ is not a subalgebra. Then  $g$
must be constant on $\{0,1\}$ (by~\ref{A2}(\ref{A2ii})) where the
constant is outside $\{0,1\}$. Because of Claim~1
we get $g0=g1=2$ and $g2\in\{0,1,2\}$. The values
$g2\in\{0,2\}$ cannot appear (otherwise $\rho_{1}=[0,2]\in\Con(A,g)$),
thus it remains $g2=1$. Since $\{0,0',1,1'\}\leq (A,f)$, again
by~\ref{A2}(\ref{A2ii}) $g$ must be constant $2$ on these elements, in
particular $g1'=2$. Consequently 
$(1,2)=(g2,g1')\in\theta_{f}(2,1')=[2,1'][1,0',0]$, a contradiction
 (see Figure~\ref{fig:3}).

\underline{Claim 3}: We have \emph{$\{0\}\leq (A,g)$}, i.e. $g0=0$.\\
If $g0\neq 0$, then $g0=1$ (by Claim~2).  
Because $1\notin\{0,0',1',2'\}\leq (A,f)$ (recall $0'=0$ in case (b)) and 
by~\ref{A2}(\ref{A2ii}) $g$ must be constant $1$ on these elements, in
particular $g2'=1$. 
Thus $(g2,1)=(g2,g2')\in\theta_{f}(2,2')=[2,2'][1,1'][0,0']$ implies
$g2=1$ according to Claim 1. Consequently $\theta_{g}(0,2)=[0,2]=\rho_{1}$, a
contradiction. Thus $g0=0$.

\underline{Claim 4}: $g$ and $f$ agree on $\{0,1,2\}$.\\
Because $g0=0$ (by claim~3), the values $g2\in\{0,2\}$ cannot appear 
(otherwise $\rho_{1}=[0,2]\in\Con(A,g)$). Thus $g2=1$ (by Claim~1).
It remains to prove $g1=0$. If $g1\neq 0$ then $g1=1$ (by
Claim~2). Thus $(1,g2')=(g1,g2')\in\theta_{f}(1,2')=[1,2'][0,1',0']$
what implies either $g2'=2'$  or $g2'=1$, 
the former gives the contradiction
$(1,2')=(g2,g2')\in\theta_{f}(2,2')=[2,2'][1,1'][0,0']$ and the latter
gives the contradiction $(0,1)=(g0,g2')\in\theta_{f}(0,2')=[0,0',1',2']$. Thus
$g1=0$. 

\underline{Claim 5}:   $g$ and $f$ agree on
$\{0',1',2'\}$.\\ 
From $(1,g2')=(g2,g2')\in\theta_{f}(2,2')=[2,2'][1,1'][0,0']$ we
conclude $g2'\in\{1,1'\}$. However $g2'=1$ gives the contradiction 
as seen in Claim~4. Consequently $g2'=1'$. Thus $\{0',1',2'\}$ is a
subalgebra of $(A,g)$ (since $g2'\in\{0',1',2'\}\leq(A,f)$). Further, 
$(0,g1')=(g1,g1')\in\theta_{f}(1,1')=[1,1'][0,0']$ gives
$g1'\in\{0',0\}$, thus $g1'=0'=f1'$ (note $g1'$ must belong to the
subalgebra $\{0',1',2'\}$). Finally,
$(0,g0')=(g0,g0')\in\theta_{f}(0,0')=[0,0']$ implies $g0'=0'=f0'$
(note $g0'$ must belong to the subalgebra $\{0',1'\}$).

\underline{Claim 6}: We have
$Z_{f}(x)\leq(A,g)$, i.e. $Z_{g}(x)\subseteq Z_{f}(x)$, for 
each $x\in A$.\\ 
If $f$ satisfies \ref{D0}(b), then $0\in Z_{f}(x)\leq (A,f)$, and with
\ref{A2}(\ref{A2ii}) and $g0=0$ we get $Z_{f}(x)\leq(A,g)$.\\
If $f$ satisfies \ref{D0}(a), then
 $B:=\{0\}\cup Z_{f}(x)$ and $B':=\{0'\}\cup Z_{f}(x)$ are subalgebras
of $(A,f)$ and $g0=0$, $g0'=0'$. Thus  they are also subalgebras of $(A,g)$
(due to \ref{A2}(\ref{A2ii})),
consequently the intersection $B\cap B'=Z_{f}(x)$ is also a subalgebra
of $(A,g)$.

\underline{Claim 7}:   $g=f$.\\
Because of Claim 4 and 5 we have to show $gx=fx$ for each $x\in
A\setminus \{0,1,2,0',1',2'\}$.  
If $x$ is a fixed point of $f$, then $Z_{f}(x)=\{x\}$ and from
Claim~6 we get $gx=x=fx$. Thus let $t_{f}(x)\geq 1$.
W.l.o.g. we can assume $1\notin Z_{f}(x)$ 
(otherwise interchange the role of $1$ and $1'$).  We have
\begin{align*}
(1,gx)=(g2,gx)\in\theta_{f}(2,x)=
\begin{cases}
  [2,x][1,fx][0,f^{2}x,\dots,f^{k}x] &\text{ if } t_{f}(x)\geq 2,\\
  [2,x][0,1,fx] &\text{ if } t_{f}(x)=1.
\end{cases}  
\end{align*}
If $t_{f}(x)\geq 2$ we conclude $gx\in\{1,fx\}$ and get
$gx=fx$ (since $1\notin Z_{g}(x)$).
If $t_{f}(x)=1$, then we conclude $gx\in\{0,1,fx\}$. Moreover, by
assumption we have $x\notin\{0,1,fx\}$ and, by Claim~6 we get $gx\in
Z_{f}(x)=\{x,fx\}$. Thus $gx\in \{x,fx\}\cap\{0,1,fx\}=\{fx\}$. 

From Claim~7 
we get $\Con(A,f)=\Con(A,g)$, a contradiction.
\end{proof}

\begin{theorem}\label{Thm4}
  A congruence lattice $L=\Con(A,f)$ with an acyclic $f\in A^{A}$ is
\meet-irreducible in $\cE$ if and only if $f$ is of type {\rm (I)} or
{\rm (II)} or satisfies the condition {\rm \ref{D0}(a)} or {\rm (b)}.
\end{theorem}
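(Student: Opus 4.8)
The plan is to assemble Propositions~\ref{D1a} and~\ref{D1b} together with the structural fact recorded in Remark~\ref{A01}, namely that every \meet-irreducible element of $\cE$ has the form $\Con(A,g)$ for a \emph{single} function $g$. I would treat the two implications separately, handling the forward direction by contraposition through Proposition~\ref{D1a} and the backward direction by a case split in which the two new conditions \ref{D0}(a),(b) are routed through Proposition~\ref{D1b}.

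For ``$\Rightarrow$'' I would first note that a \meet-irreducible $\Con(A,f)$ must differ from the top $\Eq(A)$, so $f$ is nontrivial (a trivial $f$ gives $\Con(A,f)=\Eq(A)$). Were $f$ neither of type~\rmpI\ nor~\rmpII\ and satisfied neither \ref{D0}(a) nor~(b), then Proposition~\ref{D1a} would render $\Con(A,f)$ \meet-reducible, contradicting the hypothesis. Hence $f$ is of type~\rmpI\ or~\rmpII\ or satisfies \ref{D0}(a) or~(b). This direction is essentially immediate once Proposition~\ref{D1a} is in hand.

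For ``$\Leftarrow$'' I would split into cases. If $f$ is of type~\rmpI\ or~\rmpII, then by Theorem~\ref{Thm1} the lattice $\Con(A,f)$ is a coatom of $\cE$, and a coatom is automatically \meet-irreducible, its only upper cover being $\Eq(A)$. In the complementary case $f$ is \emph{not} of type~\rmpI\ or~\rmpII\ but satisfies \ref{D0}(a) or~(b), which is exactly the hypothesis of Proposition~\ref{D1b}. I would first record that $\rho_{1}:=[0,2]$ is not a congruence of $(A,f)$: the pair $(0,2)\in\rho_{1}$ is sent to $(f0,f2)=(0,1)\notin\rho_{1}$, since $0,1,2$ are distinct in both \ref{D0}(a) and~(b); in particular $\Con(A,f)\subsetneqq\Eq(A)$. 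Then, using finiteness of $\cE$, I would write $\Con(A,f)$ as the intersection of all \meet-irreducible elements of $\cE$ lying above it, each of the form $\Con(A,g)$ by Remark~\ref{A01}. If $\Con(A,f)$ were \meet-reducible it would not itself occur among these \meet-irreducibles, so each of them would strictly contain $\Con(A,f)$; Proposition~\ref{D1b} would then place $\rho_{1}$ into every such $\Con(A,g)$, hence into their intersection $\Con(A,f)$, contradicting $\rho_{1}\notin\Con(A,f)$. Thus $\Con(A,f)$ is \meet-irreducible.

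The step I expect to be the crux — the only genuinely non-mechanical move — is this last reduction: recognizing that Proposition~\ref{D1b}, which on its face only constrains the \emph{single-function} lattices strictly above $\Con(A,f)$, already controls the entire filter $\filter[\cE]{\Con(A,f)}$, precisely because the \meet-irreducibles of $\cE$ are exactly the single-function congruence lattices. A secondary point to watch is that the type~\rmpI/\rmpII\ instances must go through the coatom argument rather than through Proposition~\ref{D1b}: an $f$ satisfying \ref{D0}(b) can simultaneously be of type~\rmpII\ (for example on $A=\{0,1,2,1',2'\}$, where $f^{2}$ is constant $0$ and $|\kernelclass 0|=3$), and such an $f$ lies outside the scope of~\ref{D1b}.
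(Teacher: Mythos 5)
Your proposal is correct and follows essentially the same route as the paper: the forward direction is the contrapositive of Proposition~\ref{D1a}, the type~\rmpI/\rmpII\ cases are dispatched as coatoms via Theorem~\ref{Thm1}, and the remaining cases use Proposition~\ref{D1b} to place $\rho_{1}=[0,2]$ in every element of $\cE$ strictly above $\Con(A,f)$ while $\rho_{1}\notin\Con(A,f)$. Your explicit reduction to the single-function \meet-irreducibles above $\Con(A,f)$ merely spells out a step the paper leaves implicit.
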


\begin{proof} Note that $\Con(A,f)$ is a coatom for functions $f$ of type
  (I) or (II), and therefore \meet-irreducible. So we need not
  consider these cases in the following.

``$\Rightarrow$'': follows from Proposition~\ref{D1a}

``$\Leftarrow$'':  From Proposition~\ref{D1b} we
conclude that 
  \begin{align*}
    \rho_{1}\in\bigcap\{\Con(A,g)\mid \Con(A,f)\subsetneqq\Con(A,g)\}.
  \end{align*}
Since
$\theta_{f}(0,2)=[0,1,2]$ we have  $\rho_{1}\notin\Con(A,f)$ and 
the above intersection cannot be equal to $\Con(A,f)$. Therefore
$\Con(A,f)$ is \meet-irreducible.
\end{proof}

\section{Some lattice theoretical properties of $\cE$}
\label{sec:5}

At first we consider the problem how many coatoms (atoms, resp.) do we
need such that their meet (join, resp.) in $\cE$ gives the least
(greatest, resp.) element of $\cE$. We assume throughout that $|A|\geq 3$.

\begin{proposition}\label{P7}
There are two or three coatoms in the lattice $\cE$ whose meet is~$\LE$.
More precisely, for $|A|>4$, there are two coatoms
$\Con(A,f)$ and $\Con(A,g)$ such that
 $\Con(A,f)\cap \Con(A,g) =\{\Delta,\nabla\}$.
For $|A|\leq 4$, three coatoms are necessary \textup(and sufficient\textup) for this
property.
\end{proposition}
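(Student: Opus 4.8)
The starting point is that in $\cE$ the meet is intersection, so that for $f_{1},\dots,f_{k}\in A^{A}$ one has $\Con(A,f_{1})\cap\dots\cap\Con(A,f_{k})=\Con(A,\{f_{1},\dots,f_{k}\})$. Hence a family of coatoms has meet $\LE$ exactly when the corresponding algebra $(A,\{f_{1},\dots,f_{k}\})$ is congruence-simple, i.e.\ has no congruence besides $\Delta$ and $\nabla$. By Theorem~\ref{Thm1} the admissible $f_{i}$ are precisely the nontrivial functions of types (I)--(III). Since a single coatom is a proper element of $\cE$, at least two are always needed; and since no nontrivial equivalence is preserved by every idempotent (type~(I)) map, the meet of \emph{all} coatoms is $\LE$, so some finite family works. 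Thus the whole statement reduces to determining the least number of type~(I)--(III) functions needed to make $(A,F)$ simple.

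For $|A|>4$ the plan is to realise simplicity with two permutations. Two functions that are permutations give a permutation algebra whose congruences are exactly the block systems of the group they generate; hence $(A,c_{1},c_{2})$ is simple iff $\langle c_{1},c_{2}\rangle$ is primitive. So I would choose, for every $n=|A|\ge5$, two type~(III) involutions generating a primitive group, e.g.\ $c_{1}=(0\,1)(2\,3)(4\,5)\cdots$ and $c_{2}=(1\,2)(3\,4)(5\,6)\cdots$ (fixing the last point when $n$ is odd), each a product of $\ge2$ transpositions and hence of type~(III); for $n=5$ one checks that $c_{1}c_{2}$ is a $5$-cycle, so $\langle c_{1},c_{2}\rangle=A_{5}$ is primitive, and the same pattern yields a primitive group ($A_{n}$ or $S_{n}$) for all $n\ge5$. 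Then $\Con(A,c_{1})\cap\Con(A,c_{2})=\{\Delta,\nabla\}$ with both $\Con(A,c_{i})$ coatoms. The delicate point here is the uniform primitivity claim. Alternatively, and more in the spirit of this paper, one transfers the corresponding two-coatom fact for $\cL$ from \cite{JakPR2016}: as $\Phi$ is a surjective residual (hence \meet-preserving) map carrying the coatoms of $\cL$ onto those of $\cE$ (Corollary~\ref{B4}) with $\Phi(\LE[\cL])=\LE$, two coatoms $Q_{1},Q_{2}$ of $\cL$ with $Q_{1}\cap Q_{2}=\LE[\cL]$ give $\Phi(Q_{1})\cap\Phi(Q_{2})=\LE$ at once.

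For $|A|\le4$ three coatoms suffice, by explicit examples. For $|A|=3$ every coatom is a type~(I) lattice ``centred'' at a point $a$, namely $\{\Delta,[a,b],[a,c],\nabla\}$ with $\{a,b,c\}=A$; the three lattices centred at $0,1,2$ intersect in $\{\Delta,\nabla\}$. For $|A|=4$ one takes $c=(0\,1)(2\,3)$ of type~(III) together with the two idempotents $g_{2}$ (fixing $0,1,2$, with $3\mapsto2$) and $g_{3}$ (fixing $0,2$, with $1\mapsto2$ and $3\mapsto0$) of type~(I); a direct computation gives $\Con(A,c)\cap\Con(A,g_{2})=\{\Delta,\nabla,[0,1],[2,3],[0,1][2,3]\}$, and intersecting with $\Con(A,g_{3})$ (which preserves none of $[0,1]$, $[2,3]$, $[0,1][2,3]$) leaves only $\{\Delta,\nabla\}$.

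The main obstacle is the converse for $|A|\le4$: that two coatoms never suffice, equivalently that any two type~(I)--(III) functions $f,g$ admit a common nontrivial congruence (so $\At(\Con(A,f))\cap\At(\Con(A,g))\neq\emptyset$ in the sense of Remark~\ref{A4}). For $|A|=3$ this is immediate: two coatoms centred at distinct points $a\ne a'$ share $[a,a']$. For $|A|=4$ it is a finite case analysis over the type pairs (I,I), (I,II), (I,III), (II,II), (II,III), (III,III). The recurring tools are: (a) the three type~(III) functions are the double transpositions, and each preserves all three ``pairing'' partitions $[0,1][2,3]$, $[0,2][1,3]$, $[0,3][1,2]$, so any two of them already share these; (b) every type~(I) or type~(II) coatom preserves either one of those pairing partitions or some $2$-block $[x,y]$ contained in a distinguished $3$-element set (its fixed set, or the sink-fibre $f^{-1}(0)$), and matching this against a partner forces overlap---for instance against a double transposition at least one of its two $2$-cycles lands inside such a $3$-set, yielding a shared $2$-block. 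Completing the remaining (I,I), (I,II) and (II,II) checks in the same style is the most laborious step and finishes the proof that three coatoms are genuinely necessary when $|A|\le4$.
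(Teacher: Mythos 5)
Your reduction to congruence-simplicity of $(A,F)$ and your treatment of $|A|\le 4$ are essentially fine (the sufficiency examples check out, and the necessity of three coatoms for $|A|\in\{3,4\}$ is left as a finite verification in the paper as well), but the primary argument for $|A|>4$ contains a genuine error. Two involutions $c_{1},c_{2}$ never generate $A_{n}$ or $S_{n}$ for $n\ge 4$: since $c_{1}(c_{1}c_{2})c_{1}^{-1}=(c_{1}c_{2})^{-1}$, the group $\langle c_{1},c_{2}\rangle$ is dihedral of order $2\cdot\mathrm{ord}(c_{1}c_{2})$. Worse, for composite $n$ the proposed pair acts imprimitively. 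Take $n=6$, $A=\{1,\dots,6\}$, $c_{1}=(1\,2)(3\,4)(5\,6)$, $c_{2}=(2\,3)(4\,5)$: then $r:=c_{1}c_{2}$ is a $6$-cycle, $\langle c_{1},c_{2}\rangle\cong D_{6}$ acts as the symmetry group of a hexagon, and the partition $[1,6][2,5][3,4]$ into antipodal pairs $\{x,r^{3}x\}$ is a nontrivial equivalence preserved by both $c_{1}$ and $c_{2}$, so $\Con(A,c_{1})\cap\Con(A,c_{2})\neq\LE$. The same failure occurs for every composite $n$ (the natural dihedral action on an $n$-gon is primitive only for prime $n$), so the claim that ``the same pattern yields a primitive group for all $n\ge5$'' is false and your main construction works only for prime $n$. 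Your $n=5$ computation does survive, but for the right reason: $\langle c_{1},c_{2}\rangle$ there is $D_{5}$ of order $10$, not $A_{5}$, and it is primitive because it is transitive of prime degree.

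What rescues the statement is your ``alternative'' route, which is in fact the paper's proof: take two coatoms $Q_{1},Q_{2}$ of $\cL$ with $Q_{1}\cap Q_{2}=\LE[\cL]$ and push them through the \meet-preserving surjection $\Phi$ using Corollary~\ref{B4}. But note that \cite[Proposition~6.2]{JakPR2016} supplies such a pair only for $|A|>5$; for $|A|=5$ three coatoms are needed in $\cL$, so the transfer is silent there and that case must be settled inside $\cE$ separately (the paper exhibits an explicit pair of type~(II) functions; your $D_{5}$ pair also works once justified as above). The correct assembly is therefore: transfer from $\cL$ for $|A|>5$, an ad hoc pair for $|A|=5$, explicit triples for $|A|\in\{3,4\}$, plus the finite check --- carried out in neither your text nor, explicitly, the paper's --- that two coatoms never suffice for $|A|\in\{3,4\}$. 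As written, your primary argument for $|A|>4$ does not stand.
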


\begin{proof}
  Since $\Con(A,f)=\Eq(A)\cap\Quord(A,f)$ is a coatom in $\cE$ iff $\Quord(A,f)$ is a coatom
  in $\cL$ (cf.~\ref{B4}, \ref{P2}), the result mainly follows
  from the corresponding result
  in~\cite[Proposition~6.2]{JakPR2016}. In that paper for $|A|>5$
  there are indicated two
  permutations $f$ and $g$ of type (III), for $|A|=5$ three
  permutations of type (III), for $|A|=4$ one permutation of type
  (III) and two functions of type (I) and for $|A|=3$ three functions
  of type (I). Now, with $\cE$ instead of $\cL$, in case
  $A=\{1,\dots,n\}$ for $n=5$ also two functions (e.g., of 
  type (II)) suffice, e.g., $f4=2$, $f5=3$, $f2=f3=f1=1$ and
  $g1=4$, $g2=5$, $g4=g5=g3=3$.
  It can be checked easily that two coatoms are not sufficient for
  $n\in\{3,4\}$.  
\end{proof}

\begin{proposition}\label{P8}
There are three atoms in $\cE$ whose join is
$\GE$. More precisely, there are three equivalence relations
$\kappa_{1}, \kappa_{2}, \kappa_{3}$ such that
$E_{\kappa_{1}}\vee E_{\kappa_{2}}\vee E_{\kappa_{3}}=\Eq(A)$.
\end{proposition}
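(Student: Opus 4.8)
The plan is to turn the statement into a purely combinatorial condition on three equivalence relations. By the join formula in Remark~\ref{A01} and Theorem~\ref{Thm2}, for $\kappa_1,\kappa_2,\kappa_3\in\Eq(A)\setminus\{\Delta,\nabla\}$ one has
\begin{align*}
E_{\kappa_1}\join E_{\kappa_2}\join E_{\kappa_3}
=\Con\End\bigl(\{\Delta,\nabla,\kappa_1,\kappa_2,\kappa_3\}\bigr)
=\Con\bigl(\End\kappa_1\cap\End\kappa_2\cap\End\kappa_3\bigr),
\end{align*}
since $\Delta$ and $\nabla$ are preserved by every mapping. Writing $M:=\End\kappa_1\cap\End\kappa_2\cap\End\kappa_3$ and recalling, as in the proof of Lemma~\ref{B3}(i), that the trivial mappings $\{\id_A\}\cup C$ (where $C$ is the set of constants) are \emph{exactly} those preserving every equivalence relation, we get $\Con M=\Eq(A)$ iff $M\subseteq\{\id_A\}\cup C$, i.e.\ $M=\{\id_A\}\cup C$. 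Thus the proposition reduces to the following task: \emph{find three equivalence relations on $A$ whose only common endomorphisms are the identity and the constants.}

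So I would devote the proof to exhibiting such a triple. A mapping $f$ preserves an equivalence $\kappa$ precisely when it sends every $\kappa$-block into a single $\kappa$-block; hence the blocks of $\kappa_1,\kappa_2,\kappa_3$ must be arranged so rigidly that no non-identity, non-constant $f$ can respect all of them. A first remark guiding the choice: every element must lie in a non-singleton block of some $\kappa_i$, for otherwise a free element could be displaced (swapped with another free element, or moved while leaving all nontrivial blocks fixed), giving a nontrivial non-constant endomorphism. A natural rigid skeleton covering all of $A$ is a path: label $A=\{0,1,\dots,n-1\}$ and let $\kappa_1$ consist of the pairs $\{0,1\},\{2,3\},\dots$ and $\kappa_2$ of the pairs $\{1,2\},\{3,4\},\dots$, so that $\kappa_1,\kappa_2$ encode the edges of $0-1-\cdots-(n-1)$ and pin down the path structure. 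The path still admits the reflection $i\mapsto n-1-i$, so I add a third equivalence $\kappa_3$ chosen to be asymmetric with respect to this reflection (for $n=3$ one simply takes the three two-element equivalences $[0,1],[0,2],[1,2]$, which already work).

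The verification then splits in two. First, rule out \emph{folds}: if a common endomorphism $f$ were non-constant yet identified two points $a\neq b$ (so its image has at least two elements), I would exhibit some $\kappa_i$ together with a $\kappa_i$-related pair $(x,y)$ whose images lie in different $\kappa_i$-blocks — concretely a block of $\kappa_i$ linking a point of $f^{-1}(fa)$ to a point outside it — contradicting $f\preserves\kappa_i$; this forces $f$ to be injective, hence a permutation. Second, the asymmetry built into $\kappa_3$ relative to the reflection of the path leaves $\id_A$ as the only common permutation, while the only remaining survivors are the constants, giving $M=\{\id_A\}\cup C$ as required. I expect the genuine obstacle to be precisely the first half, \emph{uniformly in} $n$: three equivalence relations are very few, and collapses — unlike the reflection — are not excluded by the path skeleton alone, so the choice of $\kappa_3$ must be made with care (very possibly distinguishing a handful of small cases $|A|\in\{3,4,5\}$, as was done for the dual coatom statement in Proposition~\ref{P7}) to guarantee that no folding endomorphism slips through.
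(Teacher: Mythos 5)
Your reduction is correct and is exactly the reduction the paper makes implicitly: by the join formula of Remark~\ref{A01}, the statement is equivalent to finding $\kappa_{1},\kappa_{2},\kappa_{3}$ with $\End\{\kappa_{1},\kappa_{2},\kappa_{3}\}=\{\id_{A}\}\cup C$. The problem is that this existence statement is the \emph{entire} content of the proposition, and you do not establish it. You propose a path-shaped skeleton for $\kappa_{1},\kappa_{2}$ and then say that $\kappa_{3}$ ``must be made with care'' so that ``no folding endomorphism slips through,'' explicitly flagging this as the genuine obstacle and leaving it open. That is an acknowledged gap, not a proof. The difficulty is real: already for $A=\{0,1,2,3\}$ with $\kappa_{1}=[0,1][2,3]$ and $\kappa_{2}=[1,2]$, the map $0\mapsto 0$, $1\mapsto 1$, $2\mapsto 1$, $3\mapsto 0$ folds the path onto its first edge and preserves both relations, and for general $n$ there is a whole family of such retractions onto initial segments; showing that one additional equivalence relation kills all of them simultaneously, uniformly in $n$, is a nontrivial combinatorial theorem, not a routine verification.

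The paper sidesteps this entirely by quoting the result of L.~Z\'adori \cite{Zad86} that three equivalence relations with only trivial common endomorphisms always exist (equivalently, that three partitions suffice to generate the partition lattice in the relevant closure), and then the one-line computation $E_{\kappa_{1}}\vee E_{\kappa_{2}}\vee E_{\kappa_{3}}=\Con\End\{\kappa_{1},\kappa_{2},\kappa_{3}\}=\Eq(A)$ finishes the argument. To repair your proof you must either cite such a result or actually carry out the construction and the case analysis excluding all non-injective common endomorphisms; as written, the proposal stops exactly where the real work begins.
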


\begin{proof}
  By a result of L.~Z\'adori~\cite{Zad86} there exist equivalence
  relations $\kappa_{1}, \kappa_{2}, \kappa_{3}$ such that
  $\End\{\kappa_{1},\kappa_{2},\kappa_{3}\}=\{\id_{A}\}$. This implies
$E_{\kappa_{1}}\vee E_{\kappa_{2}}\vee
E_{\kappa_{3}}=\Con\End(E_{\kappa_{1}}\cup E_{\kappa_{2}}\cup
E_{\kappa_{3}})=\Con\{\id_{A}\}=\Eq(A)$. (For notation $E_{\kappa}$
see \ref{Thm2}.)
\end{proof}

Now we look for tolerances of the lattice $\cE$. Because tolerance simplicity
implies interesting properties of a lattice (see, e.g.,
\cite{Kin79}), we looked for this property for the lattice $\cE$ and got an
affirmative result in Theorem~\ref{P6b} below.

At first we collect some notions, notations and facts which for clearer understanding we shall present on abstract level
(for an arbitrary lattice $V$ instead of our lattice $\cE$).

\begin{subpart}\label{P6bA}
Let $V$ be a lattice with the order and covering relation denoted by
$\leq$ and $\cover$, respectively. If $V$ is a bounded lattice (in
particular, if it is finite), its least and greatest elements are
denoted by $\LE[V]$ and $\GE[V]$.

A \New{tolerance} of  $V$ is a reflexive and symmetric
binary relation $T\subseteq V\times V$ compatible with the lattice
operations \meet\ and \join. Let $\Tol(V)$ denote all tolerances of $V$.
 With respect to set-theoretic inclusion the
tolerances form an algebraic lattice $(\Tol(V),\cap,\sqcup)$ with
least element $\LT[V]:=\{(x,x)\mid x\in V\}$ and greatest
element $\GT[V]:=V\times V$ (called \New{trivial} tolerances).
A lattice $V$ is called \New{tolerance simple} if it has no nontrivial
tolerances, i.e., $\Tol(V)=\{\LT[V],\GT[V]\}$.

For $x,y\in V$, let $T(x,y)$ denote the least tolerance in
$\Tol(V)$ containing the pair $(x,y)$. Clearly, for each $T\in\Tol(V)$,
we have $T=\bigsqcup\{T(x,y)\mid (x,y)\in T\}$.  The following properties are
known (see, e.g., \cite{RadS05}) for $x,y\in V$:
\begin{align}
  T(x\meet y,y)=T(x,x\join y), 
   \label{t1}\\
  (\LE[V],\GE[V])\in T\in\Tol(V)\implies T=\GT[V]. \label{t2}
\end{align}

A lattice $V$ is called \New{atomistic} if every element $v\in
V\setminus\{\LE[V]\}$ is the join of some atoms of $V$. The atoms of
$V$, denoted by $\At(V)$ in the following, play an important role also
in connection with tolerance simplicity. From \cite{JanR2015}
we deduce (see also \cite[6.4]{JakPR2016}) the following: A finite
atomistic lattice $V$ satisfying $T(\LE[V],a)=\GT[V]$ for every atom $a\in
V$, is tolerance simple.
\end{subpart}

\begin{lemma}\label{P6bB} Let $V$ be a finite atomistic lattice. Then
  we have:
  \begin{enumerate}[\rm(i)]
  \item \label{P6bBi} 
    Let $a_{1},a_{2}\in\At(V)$, $a_{1}\neq a_{2}$ and let $d\in V$
    be a coatom such that $a_{1}\not\leq d$, $a_{1}\not\leq d$. Then 
    $T(\LE[V],a_{1})=T(\LE[V],a_{2})$.
  \item \label{P6bBii} 
    If $T(\LE[V],a_{1})=T(\LE[V],a_{2})$ for all
    $a_{1},a_{2}\in\At(V)$, then $V$ is tolerance-simple.
  \end{enumerate}
  
\end{lemma}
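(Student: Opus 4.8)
The plan is to establish~\eqref{P6bBi} by routing both $T(\LE[V],a_{1})$ and $T(\LE[V],a_{2})$ through the single tolerance $T(d,\GE[V])$ with the help of the transfer identity~\eqref{t1}, and then to obtain~\eqref{P6bBii} by joining all atoms up to $\GE[V]$ and applying~\eqref{t2} together with the tolerance-simplicity criterion recalled in~\ref{P6bA}.

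First, for~\eqref{P6bBi}, I would record two elementary computations forced by the atom/coatom structure: for $i\in\{1,2\}$, since $a_{i}$ is an atom with $a_{i}\not\leq d$ we have $a_{i}\meet d=\LE[V]$ (the only alternative $a_{i}\meet d=a_{i}$ would give $a_{i}\leq d$); and since $d$ is a coatom, the element $a_{i}\join d$ strictly exceeds $d$ and therefore equals $\GE[V]$.

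The key step is then the following routing. Starting from $(\LE[V],a_{1})\in T(\LE[V],a_{1})$ and joining with the reflexive pair $(d,d)$, compatibility of the tolerance with $\join$ gives $(d,\GE[V])\in T(\LE[V],a_{1})$, hence $T(d,\GE[V])\subseteq T(\LE[V],a_{1})$. On the other hand, writing $\GE[V]=d\join a_{2}$ and applying~\eqref{t1} with $x=d$, $y=a_{2}$ yields $T(d,\GE[V])=T(d\meet a_{2},a_{2})=T(\LE[V],a_{2})$. Combining the two inclusions gives $T(\LE[V],a_{2})\subseteq T(\LE[V],a_{1})$, and interchanging the roles of $a_{1}$ and $a_{2}$ gives the reverse inclusion, so $T(\LE[V],a_{1})=T(\LE[V],a_{2})$.

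For~\eqref{P6bBii}, let $T_{0}$ denote the common value of $T(\LE[V],a)$ over all atoms $a$ (the case $V=\{\LE[V]\}$ being trivial). Since $V$ is atomistic, $\GE[V]$ is a join $a_{1}\join\dots\join a_{n}$ of atoms; as each $(\LE[V],a_{j})\in T_{0}$, joining these pairs and using compatibility with $\join$ puts $(\LE[V],\GE[V])$ into $T_{0}$, whence $T_{0}=\GT[V]$ by~\eqref{t2}. Thus $T(\LE[V],a)=\GT[V]$ for every atom, and the criterion from~\ref{P6bA} yields tolerance-simplicity. The only genuinely delicate point is spotting the routing in~\eqref{P6bBi}, namely that $T(d,\GE[V])$ is at once contained in $T(\LE[V],a_{1})$ (obtained by a join) and equal to $T(\LE[V],a_{2})$ (obtained from~\eqref{t1}); everything else is bookkeeping with the covering relations and the standard identities~\eqref{t1},~\eqref{t2}.
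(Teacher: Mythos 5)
Your proof is correct and follows essentially the same route as the paper: part~\eqref{P6bBi} rests on the observations $a_{i}\meet d=\LE[V]$, $a_{i}\join d=\GE[V]$ together with identity~\eqref{t1}, and part~\eqref{P6bBii} joins the atoms up to $\GE[V]$ and invokes~\eqref{t2} and the criterion from~\ref{P6bA}. The only (immaterial) difference is that in~\eqref{P6bBi} the paper applies~\eqref{t1} symmetrically to both atoms to get $T(\LE[V],a_{i})=T(d,\GE[V])$ at once, whereas you obtain one inclusion by an explicit join computation and then appeal to symmetry.
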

\begin{proof}
\eqref{P6bBi}: Since $a_{i}\meet d=0$ and $a_{i}\join d=1$ for
$i=1,2$, we get
\begin{align*}
  T(\LE[V],a_{i})=T(d\meet a_{i},a_{i})=_{\eqref{t1}}T(d,d\join
  a_{i})=T(d,\GE[V]),
\end{align*}
consequently, $T(\LE[V],a_{1})=T(\LE[V],a_{2})$.

\eqref{P6bBii}: Since $T(\LE[V],a)$ is the same tolerance for each atom
$a\in\At(V)$, we will denote it by $\alpha$. We have
$(\LE[V],a)\in\alpha$ for all $a\in\At(V)$, consequently
$(\LE[V],\GE[V])=(\LE[V],\bigvee\At(V))\in\alpha$. From \eqref{t2} we get
$\alpha=\nabla_{V}$, i.e., $T(\LE[V],a)=\nabla_{V}$
for all $a\in\At(V)$. As mentioned above in~\ref{P6bA}, this implies
tolerance-simplicity of $V$.
\end{proof}

Now, instead of the abstract lattice $V$, we return to the concrete lattice
$\cE$. Recall that $\cE$ is atomistic and
$\At(\cE)=\{E_{\kappa}\mid \kappa\in\Eq(A)\}$ where
$E_{\kappa}=\{\Delta,\kappa,\nabla\}$ (Theorem~\ref{Thm2}). The least
and greatest elements are $\LE=\{\Delta,\nabla\}$ and $\GE=\Eq(A)$.
As defined in~\ref{A1a}, $[a,b]$ denotes the equivalence relation (on $A$) with
one nontrivial block $\{a,b\}$.

\begin{lemma}\label{P6bC}
Let $\kappa\in\Eq(A)\setminus\{\Delta,\nabla\}$. Then there exists
$(a,b)\in\kappa$, $a\neq b$, such that $T(\LE,E_{\kappa})=T(\LE,E_{[a,b]})$.
  
\end{lemma}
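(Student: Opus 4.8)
The plan is to invoke Lemma~\ref{P6bB}\eqref{P6bBi}: to prove $T(\LE,E_{\kappa})=T(\LE,E_{[a,b]})$ it suffices to produce a pair $(a,b)\in\kappa$ with $a\neq b$ together with a single coatom $d$ of $\cE$ lying above neither $E_{\kappa}$ nor $E_{[a,b]}$, provided these two atoms are distinct; the case $\kappa=[a,b]$ is trivial by reflexivity of $T(\LE,-)$. Since the atoms of $\cE$ are exactly the $E_{\mu}=\{\Delta,\mu,\nabla\}$ and $E_{\mu}\subseteq\Con(A,h)$ holds iff $h\preserves\mu$, what I really need is a coatom $\Con(A,f)$ with $f\notpreserves\kappa$ \emph{and} $f\notpreserves[a,b]$ at the same time.

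First I would fix a nontrivial block $B$ of $\kappa$ and choose distinct $a,b\in B$; because $\kappa\neq\nabla$ there is an element $e\in A\setminus B$. I then define $f\in A^{A}$ by $fb:=e$ and $fx:=x$ for all $x\neq b$. This $f$ satisfies $f^{2}=f$ and is nontrivial, hence it is of type~{\rm(I)}, so $d:=\Con(A,f)$ is a coatom of $\cE$ by Theorem~\ref{Thm1}. The point worth stressing is that no delicate type~{\rm(II)}/{\rm(III)} construction is needed: any nontrivial idempotent already gives a coatom, and relocating one element of $B$ into another block is exactly what violates both equivalences.

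Next I would verify the two non-preservation facts, each via the single pair $(a,b)$. For $\kappa$: the pair $(a,b)$ lies in $\kappa$ (both in $B$), but $(fa,fb)=(a,e)\notin\kappa$ since $a\in B$ and $e\notin B$ belong to different blocks; hence $f\notpreserves\kappa$, i.e.\ $E_{\kappa}\not\leq d$. For $[a,b]$: again $(a,b)\in[a,b]$ while $(fa,fb)=(a,e)\notin[a,b]$ because $e\neq a$ and $e\neq b$; hence $f\notpreserves[a,b]$, i.e.\ $E_{[a,b]}\not\leq d$.

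Finally, in the only case that needs work, namely $\kappa\neq[a,b]$ (which occurs exactly when $|B|\geq 3$ or $\kappa$ has a further nontrivial class), the atoms $E_{\kappa}$ and $E_{[a,b]}$ are distinct and both fail to lie below the coatom $d$, so Lemma~\ref{P6bB}\eqref{P6bBi} delivers $T(\LE,E_{\kappa})=T(\LE,E_{[a,b]})$ directly. The only genuine obstacle is arranging one coatom that simultaneously detects $\kappa$ and $[a,b]$; the calculations above show the elementary map $f$ does this, so the argument reduces entirely to the abstract Lemma~\ref{P6bB}.
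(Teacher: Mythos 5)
Your proof is correct and follows essentially the same route as the paper: both arguments pick a nontrivial $\kappa$-block, choose $a,b$ in it and an element outside it, define a nontrivial idempotent (type~(I)) map sending one of $a,b$ out of the block while fixing everything else, observe that the resulting coatom $\Con(A,f)$ contains neither $E_{\kappa}$ nor $E_{[a,b]}$, and invoke Lemma~\ref{P6bB}\eqref{P6bBi}. The only (harmless) differences are that the paper moves $a$ to an element $c$ with $(a,c)\notin\kappa$ whereas you move $b$, and that you make explicit the degenerate case $\kappa=[a,b]$ needed for the hypothesis $a_{1}\neq a_{2}$ of Lemma~\ref{P6bB}\eqref{P6bBi}.
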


\begin{proof} Since $\kappa$ is nontrivial there exist distinct
  elements $a,b,c\in A$ such that $(a,b)\in\kappa$ but
  $(a,c)\notin\kappa$.  Clearly $(a,b)\in[a,b]\subseteq
  \kappa$. Define $f\in A^{A}$ via $fx:=c$ if $x=a$, and $fx=x$
  otherwise. Then $f=f^{2}$ is of type (I) and therefore $\Con(A,f)$
  is a coatom in $\cE$ (cf.~Theorem~\ref{Thm1}). Since
  $(fa,fb)=(c,b)$, the function $f$
  preserves neither $\kappa$ nor $[a,b]$. Hence
  $\kappa,[a,b]\notin\Con(A,f)$, consequently the atoms
  $E_{\kappa},E_{[a,b]}$ are not contained in $\Con(A,f)$ and
  therefore, by applying Lemma~\ref{P6bB}\eqref{P6bBi}, we
  obtain $T(\LE,E_{\kappa})=T(\LE,E_{[a,b]})$.
\end{proof}

\begin{theorem}\label{P6b}
For $|A|\geq 4$, the lattice $\cE$ is tolerance simple.
\end{theorem}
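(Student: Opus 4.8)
The plan is to reduce, via the two lemmas just proved, the entire problem to producing for any two distinct atoms a single coatom lying below neither of them, and then to build such coatoms explicitly as idempotent (type~(I)) maps. By Lemma~\ref{P6bB}\eqref{P6bBii} it suffices to show that the tolerance $T(\LE,a)$ is the same for every atom $a\in\At(\cE)$. Since every atom has the form $E_{\kappa}$ with $\kappa\in\Eq(A)\setminus\{\Delta,\nabla\}$ (Theorem~\ref{Thm2}), and since Lemma~\ref{P6bC} already replaces each such $E_{\kappa}$ by some $E_{[a,b]}$ satisfying $T(\LE,E_{\kappa})=T(\LE,E_{[a,b]})$, the task collapses to proving $T(\LE,E_{[a,b]})=T(\LE,E_{[c,d]})$ for any two distinct two-element blocks $\{a,b\}$ and $\{c,d\}$ (the case of equal blocks being vacuous).

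For this I would invoke Lemma~\ref{P6bB}\eqref{P6bBi}: it is enough to exhibit a coatom $\Con(A,f)$ of $\cE$ with $E_{[a,b]}\not\subseteq\Con(A,f)$ and $E_{[c,d]}\not\subseteq\Con(A,f)$, i.e.\ an $f$ preserving neither $[a,b]$ nor $[c,d]$; recall that $f$ preserves $[a,b]$ exactly when $(fa,fb)\in[a,b]$. I will search only among nontrivial idempotent maps, since every such $f$ (with $f^{2}=f$) is of type~(I) and hence yields a coatom by Theorem~\ref{Thm1}, which keeps the whole verification purely combinatorial.

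Two configurations arise. If the blocks are disjoint, i.e.\ $a,b,c,d$ are four distinct elements, I would take $f$ with $fa=c$, $fd=b$, and $fx=x$ otherwise; then $f^{2}=f$, and $(fa,fb)=(c,b)\notin[a,b]$ together with $(fc,fd)=(c,b)\notin[c,d]$ shows that $f$ destroys both blocks. If the blocks share one element, say $[a,b]$ and $[a,c]$ with $a,b,c$ distinct, I would use a fourth element $e\in A\setminus\{a,b,c\}$, available because $|A|\ge 4$, and set $fa=e$, $fx=x$ otherwise; then $(fa,fb)=(e,b)\notin[a,b]$ and $(fa,fc)=(e,c)\notin[a,c]$. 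In both cases $f$ is nontrivial and idempotent, so Lemma~\ref{P6bB}\eqref{P6bBi} gives $T(\LE,E_{[a,b]})=T(\LE,E_{[c,d]})$, and the theorem then follows from Lemma~\ref{P6bB}\eqref{P6bBii}.

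The main obstacle is precisely the disjoint case: a naive ``move a single point'' map fixes the other block and therefore preserves it, so one genuinely needs a map that shuffles \emph{across} both blocks, while the shared-element case forces the use of a point lying outside $\{a,b,c\}$. This is exactly where the hypothesis $|A|\ge 4$ is used, and it is essential, since for $|A|=3$ no single type~(I) coatom can break two distinct two-element blocks simultaneously.
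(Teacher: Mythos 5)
Your proposal is correct and follows essentially the same route as the paper: reduce via Lemma~\ref{P6bB}\eqref{P6bBii} and Lemma~\ref{P6bC} to two-element blocks, split into the shared-element and disjoint cases, and exhibit a single coatom below neither atom. The only (harmless) deviation is in the disjoint case, where the paper uses the type~(III) permutation $(a_{1}a_{2})(b_{1}b_{2})$ while you use the type~(I) idempotent $a\mapsto c$, $d\mapsto b$; both are coatoms by Theorem~\ref{Thm1} and both visibly destroy the two blocks.
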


\begin{proof} In view of Lemma~\ref{P6bB}\eqref{P6bBii} it is
  sufficient to show \[T(\LE,E_{\kappa_{1}})=T(\LE,E_{\kappa_{2}})
  \text{ for all atoms } E_{\kappa_{1}},E_{\kappa_{2}},\] i.e., for all
  $\kappa_{1},\kappa_{2}\in\Eq(A)\setminus\{\Delta,\nabla\}$. Due to
  Lemma~\ref{P6bC} we even may restrict to equivalence relations of
  the form $\kappa_{1}=[a_{1},b_{1}]$, $\kappa_{2}=[a_{2},b_{2}]$ for
  $(a_{1},b_{1}), (a_{2},b_{2})\in A^{2}\setminus\Delta$. If
  $\{a_{1},b_{1}\}=\{a_{2},b_{2}\}$, then
  $E_{[a_{1},b_{1}]}=E_{[a_{2},b_{2}]}$ and we are done. Hence,
  w.l.o.g, we can restrict to the following two cases:

\textsl{Case} (a): $a_{1}=a_{2}$ and $b_{1}\neq b_{2}$.\\
Since $|A|\geq 4$ there exists an element $c\in
A\setminus\{a_{1},b_{1},b_{2}\}$. Define $f\in A^{A}$ by $fx=c$ if
$x\in\{b_{1},b_{2}\}$ and $fx=x$ otherwise. Then $f$ is of type (I)
and $\Con(A,f)$ is a coatom (cf.\ Theorem~\ref{Thm1}). From the
definition immediately follows that $f$
does not preserve neither $\kappa_{1}=[a_{1},b_{1}]$ nor
$\kappa_{2}=[a_{2},b_{2}]$. Consequently $E_{\kappa_{1}}\not\subseteq
\Con(A,f)$, $E_{\kappa_{2}}\not\subseteq\Con(A,f)$ and from
Lemma~\ref{P6bB}\eqref{P6bBi} we conclude
$T(\LE,E_{\kappa_{1}})=T(\LE,E_{\kappa_{2}})$. 

 \textsl{Case} (b): $\{a_{1},b_{1}\}\cap\{a_{2},b_{2}\}=\emptyset$.\\
Consider the permutation $f:=(a_{1}a_{2})(b_{1}b_{2})$ (with two
cycles of length $2$). By Theorem~\ref{Thm1}, $\Con(A,f)$ is a coatom
(type (III)) and we have $f\not\preserves\kappa_{1}$,
$f\not\preserves\kappa_{2}$ and as in case (a) above we get $T(\LE,E_{\kappa_{1}})=T(\LE,E_{\kappa_{2}})$.
\end{proof}

The investigation of lattice properties around modularity shows that
such properties cannot be expected for $\cE$:

\begin{proposition}\label{P6a}
For $|A|\geq 4$, the lattice $\cE$ has none of the following properties:
$0$-modular, $1$-modular, lower semimodular, upper semimodular.
\end{proposition}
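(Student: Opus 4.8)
The plan is to refute all four properties at once by exhibiting a single pentagon $N_{5}$ inside $\cE$ whose bottom and top are the global bounds $\LE$ and $\GE$, whose long side climbs from an atom to a coatom, and whose short side is an element lying strictly between the atoms and the coatoms. Recall that a finite lattice fails to be $0$-modular (respectively $1$-modular) exactly when it contains a pentagon whose least (respectively greatest) element is the global $\LE$ (respectively $\GE$); hence a pentagon spanning $\LE$ to $\GE$ kills $0$-modularity and $1$-modularity simultaneously. The two semimodularity laws will then be read off from covering relations inside the very same pentagon.

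Write the pentagon as $\{\LE,a,c,b,\GE\}$ with $\LE<a<c<\GE$ a chain, $b$ incomparable to both $a$ and $c$, and $a\join b=c\join b=\GE$, $a\meet b=c\meet b=\LE$; I arrange in addition that $a$ is an atom and $c$ a coatom while $b$ is neither. To build it I would start from a rigid triple: by Proposition~\ref{P8} pick $\kappa_{1},\kappa_{2},\kappa_{3}\in\Eq(A)\setminus\{\Delta,\nabla\}$ with $E_{\kappa_{1}}\join E_{\kappa_{2}}\join E_{\kappa_{3}}=\GE$, chosen so that $\kappa_{1}$ does not lie in the closure $\Con\End\{\kappa_{2},\kappa_{3}\}$. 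Put $a:=E_{\kappa_{1}}$ (an atom by Theorem~\ref{Thm2}) and $b:=E_{\kappa_{2}}\join E_{\kappa_{3}}=\Con\End\{\kappa_{2},\kappa_{3}\}$, so that $a\join b=\GE$ holds automatically and $a\meet b=\LE$ by the independence of $\kappa_{1}$. For the long side, choose a coatom $c=\Con(A,f)$ -- with $f$ of one of the types {\rm(I)--(III)} of Theorem~\ref{Thm1} -- such that $\kappa_{1}\in\Con(A,f)$ but $\Con(A,f)$ contains none of the nontrivial congruences of $b$. Then $a\le c$; since $c\ge a$ and $b\ge E_{\kappa_{2}}\join E_{\kappa_{3}}$ we get $c\join b\supseteq E_{\kappa_{1}}\join E_{\kappa_{2}}\join E_{\kappa_{3}}=\GE$ for free, while $c\meet b=\LE$ by the disjointness just imposed. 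These meet relations also force $b$ to be incomparable to $a$ and to $c$, so $\{\LE,a,c,b,\GE\}$ is genuinely a pentagon. Finally $b$ dominates the two distinct atoms $E_{\kappa_{2}},E_{\kappa_{3}}$, so it is not an atom, and a suitable choice of $\kappa_{2},\kappa_{3}$ keeps its closure non-maximal, so it is not a coatom either.

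With this pentagon in hand the four conclusions are immediate. Its bottom is $\LE$ and its top is $\GE$, so $\cE$ is neither $0$-modular nor $1$-modular. For upper semimodularity take the pair $(a,b)$: here $a\meet b=\LE\cover a$ because $a$ is an atom, yet $b\not\cover a\join b=\GE$ because $b$ is not a coatom, so the law is violated. For lower semimodularity take the pair $(c,b)$: here $c\cover c\join b=\GE$ because $c$ is a coatom, yet $c\meet b=\LE\not\cover b$ because $b$ is not an atom, so that law fails too.

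The hard part is the construction rather than the deduction: one must produce explicit $\kappa_{1},\kappa_{2},\kappa_{3}$ and a coatom $f$ realizing all the incidences \emph{simultaneously} -- a rigid triple whose first member sits on a coatom that is meet-disjoint from the join of the other two, with that join lying strictly between atoms and coatoms -- and do so uniformly for every $|A|\ge4$. As in the rest of the paper I expect the cleanest route is to write down concrete functions for small $|A|$ and verify the finitely many principal congruences via Remark~\ref{A2a} (using Theorems~\ref{Thm1} and~\ref{Thm2} to certify coatom and atom status), extending to larger $A$ by the same pattern.
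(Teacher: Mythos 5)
Your deductive skeleton is sound and matches the paper's in spirit: both proofs exhibit a single pentagon $N_{5}$ in $\cE$ with least element $\LE$ and greatest element $\GE$ (killing $0$- and $1$-modularity at once) and then read off the two semimodularity failures from covering relations among its members. Your arrangement (atom below coatom on the long side, a ``middle'' element on the short side) is the mirror image of the paper's (atom below a middle element $L_{12}=E_{\kappa_{1}}\vee E_{\kappa_{2}}$ on the long side, the coatom $\Con(A,f)$ on the short side), and either arrangement yields all four conclusions once it exists.

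The genuine gap is that you never produce the configuration; you say so yourself (``the hard part is the construction rather than the deduction \dots I expect the cleanest route is to write down concrete functions''). That construction \emph{is} the content of the proof. Moreover, the route you sketch is harder than it needs to be: starting from a Z\'adori rigid triple via Proposition~\ref{P8} and then demanding, in addition, a coatom $\Con(A,f)$ that contains $\kappa_{1}$, is lattice-disjoint from $b=\Con\End\{\kappa_{2},\kappa_{3}\}$, and such that $b$ is neither an atom nor a coatom, piles up several simultaneous requirements whose joint satisfiability you do not verify and which is not obvious (the closure $\Con\End\{\kappa_{2},\kappa_{3}\}$ can be large, and coatoms $\Con(A,f)$ contain many equivalences, so disjointness is a real constraint). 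The paper avoids all of this with a completely explicit choice on $\{0,1,2,3\}$: $\kappa_{1}=[0,1,2]$, $\kappa_{2}=[0,1][2,3]$, $\kappa_{0}=[0,1]$, and $f$ fixing everything except $f0=3$ (type (I), hence a coatom by Theorem~\ref{Thm1}); one then checks directly that $L_{12}=E_{\kappa_{1}}\vee E_{\kappa_{2}}=\{\Delta,\kappa_{0},\kappa_{1},\kappa_{2},\nabla\}$ and that $f$ preserves none of $\kappa_{0},\kappa_{1},\kappa_{2}$, which gives the pentagon $\{\LE,E_{\kappa_{1}},L_{12},\Con(A,f),\GE\}$ together with the two non-coverings ($\LE$ is not covered by $L_{12}$ because of $E_{\kappa_{0}}$, and $E_{\kappa_{2}}$ is not covered by $L_{12}$ because of $L_{02}$). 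To complete your argument you would need to supply and verify a comparably explicit example, uniformly in $|A|\geq 4$; as it stands the existence claim is unproved.
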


\begin{proof}
If $A$ has at least $4$ elements, say $0,1,2,3$, then consider the
nontrivial equivalence relations $\kappa_{1}=[0,1,2]$,
$\kappa_{2}=[0,1][2,3]$, $\kappa_{0}=\kappa_{1}\cap\kappa_{2}=[0,1]$
 and the function $f\in A^{A}$ defined by $fx=3$ if $x=0$, and $fx=x$ otherwise.

\begin{figure}[htb]
\begin{center}
  \input{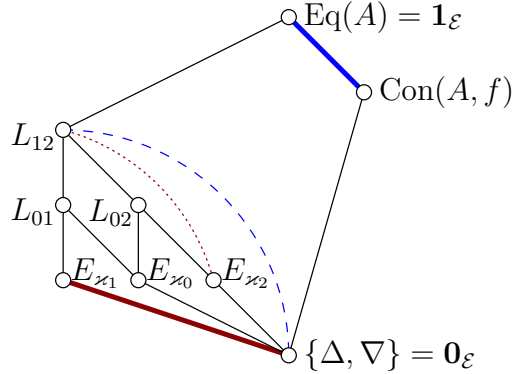}
\end{center}
  \caption{The sublattice of $\cE$ used in the proof of Proposition~\ref{P6a}}\label{fig:7}
\end{figure}

We get the sublattice as shown in Figure~\ref{fig:7}, e.g.,
$\Con(A,f)$ is a coatom by Theorem~\ref{Thm1} and it is easy to check
that for $L_{ij}:=E_{\kappa_{i}}\vee E_{\kappa_{j}}$ ($0\leq i<j\leq
2$) we have
$L_{12}=\{\Delta,\kappa_{0},\kappa_{1},\kappa_{2},\nabla\}$ and
 $L_{0j}=\{\Delta,\kappa_{0},\kappa_{j},\nabla\}$ ($j=1,2$).
 Note that $f$
does not preserve neither $\kappa_{0}$ nor $\kappa_{1}$ nor
$\kappa_{2}$.

Obviously $\{\LE,E_{\kappa_{1}},L_{12},\Con(A,f),\GE\}$ is a
sublattice isomorphic to $N_{5}$. Thus $\cE$ is neither $0$- nor
$1$-modular. Further, $\Con(A,f)\cover\GE$ (blue line) but
the meet with $L_{12}$ (dashed blue line) is not
a covering; likewise $\LE\cover E_{\kappa_{1}}$ (red line) but the join with
$E_{\kappa_{2}}$ (dotted red line) is not a covering. Hence $\cE$ is
neither lower nor upper semimodular.
\end{proof}

\begin{remark}
  For $|A|=3$, Theorem~\ref{P6b} and Proposition~\ref{P6a} do not
  remain valid. In this case, $\cE$ is the lattice of all subsets of
  $\{\Delta,\nabla,\theta_{0}, \theta_{1},\theta_{2}\}$ containing $\Delta,\nabla$, where
  $\theta_{i}$, $i\in\{0,1,2\}$, are the nontrivial equivalence
  relation on $A$. Thus it is a Boolean lattice with $8$
  elements. Therefore it is modular and it is not tolerance-simple (it
  is even not congruence-simple). 
\end{remark}

\subsection*{Acknowledgment} The authors thank S.\,Reichard for some
hints and computations which led to an improved version of
Pro\-po\-sition~\ref{P7}. 

We dedicate our paper to the memory of E. Tam\'as
Schmidt who passed away in 2016. The final
version will appear in a special issue of \textsc{Algebra Universalis}.
 Each of the authors is grateful to have known Tam\'as and to
profit from numerous inspiring and friendly discussions with this
outstanding mathematician. 
In particular, the third author is indepted
to Tam\'as as PhD supervisor introducing him to
contemporary lattice theoretical investigations and giving
constructive hints concerning the finite representation problem.

\normalsize

\small\footnotesize


\begin{thebibliography}{99999}
\expandafter\ifx\csname url\endcsname\relax
  \def\url#1{\texttt{#1}}\fi
\expandafter\ifx\csname urlprefix\endcsname\relax\def\urlprefix{URL }\fi

\bibitem[Gr{\"a}S63]{GraS63}
\textsc{G.~Gr{\"a}tzer and E.~T. Schmidt}, \emph{Characterizations of
  congruence lattices of abstract algebras}. Acta Sci. Math. (Szeged) 24,
  (1963), 34--59.

\bibitem[Jak09]{Jak09}
\textsc{D.~Jakub{\'i}kov{\'a}{-}Studenovsk{\'a}}, \emph{Lattice of quasiorders
  of monounary algebras}. Miskolc Mathem. Notes 10(1), (2009), 41--48.

\bibitem[JakP09]{JakP09}
\textsc{D.~Jakub{\'i}kov{\'a}-Studenovsk{\'a} and J.~P{\'o}cs}, \emph{Monounary
  Algebras}. P.J. {\v S}af\'arik Univ. Ko{\v s}ice, 2009.

\bibitem[JakPR11]{JakPR11}
\textsc{D.~Jakub{\'i}kov{\'a}-Studenovsk{\'a}, R.~P{\"o}schel, and
  S.~Radeleczki}, \emph{The lattice of compatible quasiorders of acyclic
  monounary algebras}. Order 28(3), (2011), 481--497, (online publication DOI
  10.1007/s11083-010-9186-9).

\bibitem[JakPR13]{JakPR13}
\textsc{D.~Jakub{\'i}kov{\'a}-Studenovsk{\'a}, R.~P{\"o}schel, and
  S.~Radeleczki}, \emph{Irreducible quasiorders of monounary algebras}. J.
  Aust.\ Math.\ Soc. 93(3), (2013), 259--276, (online publication DOI
  10.1017/S1446788712000328).

\bibitem[JakPR16]{JakPR2016}
\textsc{D.~Jakub{\'i}kov{\'a}-Studenovsk{\'a}, R.~P{\"o}schel, and
  S.~Radeleczki}, \emph{The lattice of quasiorder lattices of algebras on a
  finite set}. Algebra Universalis 75(2), (2016), 197--220.

\bibitem[Jan94]{Jan94}
\textsc{M.F. Janowitz}, \emph{Tolerances, interval orders, and semiorders}.
  Czechoslovak Math.\ J. 44(119)(1), (1994), 21--38.

\bibitem[JanR15]{JanR2015}
\textsc{M.~F. Janowitz and S.~Radeleczki}, \emph{Aggregation on a finite
  lattice}. Order pp. 1--18, online publication DOI 10.1007/s11083-015-9373-9.

\bibitem[Kin79]{Kin79}
\textsc{M.~Kindermann}, \emph{\"{U}ber die \"{A}quivalenz von
  {O}rdnungspolynomvollst\"andigkeit und {T}oleranzeinfachheit endlicher
  {V}erb\"ande}. In: \emph{Contributions to general algebra ({P}roc.
  {K}lagenfurt {C}onf., {K}lagenfurt, 1978)}, Heyn, Klagenfurt, 1979, pp.
  145--149.

\bibitem[P{\"o}s04]{Poe04a}
\textsc{R.~P{\"o}schel}, \emph{Galois connections for operations and
  relations}. In: \textsc{K.~Denecke, M.~Ern{\'e}, and S.L. Wismath} (Eds.),
  \emph{Galois connections and applications}, vol. 565 of \emph{Mathematics and
  its Applications}, Kluwer Academic Publishers, Dordrecht, 2004, pp. 231--258.

\bibitem[P{\"o}sK79]{PoeK79}
\textsc{R.~P{\"o}schel and L.A. Kalu\v{z}nin}, \emph{{F}unktionen- und
  {R}ela\-tionen\-algebren}. Deutscher Verlag der Wissenschaften, Berlin, 1979,
  {B}irkh\"auser Verlag Basel, Math. Reihe Bd. 67, 1979.

\bibitem[P{\"o}sR08]{PoeR08}
\textsc{R.~P{\"o}schel and S.~Radeleczki}, \emph{Endomorphisms of quasiorders
  and related lattices}. In: \textsc{G.~Dorfer, G.~Eigenthaler,
  H.~Kautschitsch, W.~More, and W.B. M{\"u}ller} (Eds.), \emph{Contributions to
  General Algebra 18}, Verlag Heyn GmbH \&\ Co KG, 2008, pp. 113--128,
  (Proceedings of the Klagenfurt Conference 2007 (AAA73+CYA22), Febr.\ 2007).

\bibitem[RadS05]{RadS05}
\textsc{S.~Radeleczki and D.~Schweigert}, \emph{Notes on locally
  order-polynomially complete lattices}. Algebra Universalis 53, (2005),
  397--399.

\bibitem[Wer76]{Wer76}
\textsc{H.~Werner}, \emph{Which partition lattices are congruence lattices?}
  In: \emph{Lattice theory ({P}roc. {C}olloq., {S}zeged, 1974)}, North-Holland,
  Amsterdam, 1976, pp. 433--453. Colloq. Math. Soc. J\'anos Bolyai, Vol. 14.

\bibitem[Z{\'a}d86]{Zad86}
\textsc{L.~Z{\'a}dori}, \emph{Generation of finite partition lattices}. In:
  \emph{Lectures in universal algebra (Szeged, 1983)}, vol.~43 of \emph{Colloq.
  Math. Soc. J\'anos Bolyai}, North-Holland, Amsterdam, 1986, pp. 573--586.

\end{thebibliography}
\end{document}